\documentclass[twoside]{article}

\usepackage[preprint]{aistats2026}

\usepackage[round]{natbib}

\usepackage{hyperref}
\usepackage{url}
\usepackage{amsmath,amsfonts,amssymb,amsthm,array}
\usepackage{algorithm}
\usepackage{algpseudocode}

\usepackage{tabularx}  

\usepackage{booktabs}
\usepackage{siunitx}
\usepackage{makecell}

\usepackage{enumitem}
\usepackage{microtype}
\usepackage{caption}
\usepackage{fancyhdr}
\usepackage{graphicx}
\usepackage{float}

\newtheorem{theorem}{Theorem}
\newtheorem{lemma}{Lemma}
\newtheorem{corollary}{Corollary}

\newtheorem{assumption}{Assumption}

\newcommand{\eqdef}{\; { := }\;}

\newcommand{\E}{\mathbb{E}}

\newcommand{\cD}{\mathcal{D}}

\newcommand{\R}{\mathbb{R}}

\begin{document}

\twocolumn[

\aistatstitle{Muon is Provably Faster with Momentum Variance Reduction}

\aistatsauthor{ Xun Qian  \And Hussein Rammal \And Dmitry Kovalev \And Peter Richt\'arik}

\aistatsaddress{ KAUST\footnote{Center of Excellence for Generative AI,  Thuwal, Saudi Arabia}   \And KAUST \And Yandex Research \And KAUST } ]

\begin{abstract}
 Recent empirical research has demonstrated that deep learning optimizers based on the linear minimization oracle (LMO) over specifically chosen Non-Euclidean norm balls, such as Muon and Scion, outperform Adam-type methods in the training of large language models. In this work, we show that such optimizers can be provably improved by replacing their vanilla momentum by momentum variance reduction (MVR). Instead of proposing and analyzing MVR variants of Muon and Scion separately, we incorporate MVR into the recently proposed Gluon framework, which captures Muon, Scion and other specific Non-Euclidean LMO-based methods as special cases, and at the same time works with a more general smoothness assumption which better captures the layer-wise structure of neural networks. In the non-convex case, we incorporate MVR into Gluon in three different ways. All of them improve the convergence rate from ${\cal O} (\frac{1}{K^{1/4}})$ to ${\cal O} (\frac{1}{K^{1/3}})$. Additionally, we provide improved rates in the star-convex case. Finally, we conduct several numerical experiments that verify the superior performance of our proposed algorithms in terms of iteration complexity.
\end{abstract}

\section{INTRODUCTION}

Since the success of ChatGPT, neural networks tend to grow larger and deeper. Hence an effective and stable trainer is very significant in modern machine learning. After the dominance of Adam-type methods (for instance, Adam \citep{kinga2015method}, AMSGrad \citep{reddi2019convergence}, AdamW \citep{loshchilovdecoupled}, and Adan \citep{xie2024adan}) over a decade, Muon \citep{jordan6muon} has shown the potential to outperform them in the training of large language models empirically. Later, the convergence of Muon was achieved in \citet{li2025note}, \citet{kovalev2025understanding}, and \citet{shen2025convergence}. Scion \citep{pethick2025training} improves upon Muon by applying the linear minimization oracle (LMO) based rule to all layers. Gluon \citep{riabinin2025gluon} generalizes Muon and Scion by exploring the layer-wise structure of neural networks, and numerical experiments therein close the gap between theory and practice of LMO-based optimizers for large language models (LLMs). Momentum variance reduction (MVR) is proposed in STORM by \citet{cutkosky2019momentum} for non-convex problems, where faster convergence rate than stochastic gradient descent (SGD) is obtained and no large batch size or checkpoint gradient is needed. In this work, our goal is to incorporate MVR into Muon-type methods to obtain faster convergence rate.

We consider the following optimization problem: 
\begin{equation}\label{p:p}
	\min_{X \in {\cal S}} \left\{  f(X) \eqdef \E_{\xi \sim \cD} [f_{\xi} (X) ] \right\}, 
\end{equation}
where ${\cal S}$ is a finite-dimensional vector space, $f_\xi: {\cal S} \to \R$ are potentially non-convex and non-smooth but continuously differentiable functions, and $\cD$ is the probability distribution of the training data. We assume that $f(\cdot)$ has a lower bound, i.e., $\inf_{X \in {\cal S}} > -\infty$ in this work. 

\paragraph{Related Work}
There are several works that adapt the momentum variance reduction into Muon-type methods in the reference, such as \citet{sfyraki2025lions} and \citet{chang2025convergence}. However, they did not consider the layer-wise structure of neural networks, and did not adopt the refined $(L^0, L^1)$-smoothness assumption \citep{zhang2019gradient}. The algorithm 4 in \citet{sfyraki2025lions} can achieve ${\cal O} \left(  \frac{1}{K^{1/3}}  \right)$ convergence rate, where $K$ denotes
the number of iterations, but the large batch size $K^{1/3}$ is needed at the initial step. The search direction in that algorithm is the convex combination of the stochastic gradient and the momentum $M^k$ in our proposed Gluon-MVR-1 algorithm, and the weigh decay is also adopted. The Algorithm 6 there, where an additional clipping is used, can also achieve ${\cal O} \left(  \frac{1}{K^{1/3}}  \right)$ convergence rate with $p=2$.  However, stronger assumptions are needed, i.e., the Assumption 3(b) (L-Lipschitz gradient of each data sample) and Assumption 5 (Bounded gradient norm over the set). 

The algorithm in \citet{chang2025convergence} that can achieve ${\tilde O}(\frac{1}{K^{1/3}})$ is only MVR2 with $\gamma=1$, where ${\tilde {\cal O}}$ hides some logarithmic factors. The search direction there is the same with that of Gluon-MVR-1, and the decreasing step size is used. While Gluon-MVR-1 has both constant and decreasing step-size versions. It needs the assumption that all data sample are Lipschitz smooth, which is strong, and it is not for the general norm in the linear minimization oracle. 

There is another concurrent MVR adaptation work, i.e., LiMuon in \citet{huang2025limuonlightfastmuon}. They also only consider the LMO with the spectral norm. The search direction in LiMuon with option 1 is the same with that of Gluon-MVR-1. LiMuon with option 2 is an inexact version of option 1 in the sense that a randomized Singular Value Decomposition is applied instead, and an additional assumption on the singular value of the momentum $M^k$ is used. The $(L_0-L_1)$-smoothness assumption is used with respect to the Euclidean norm, but not for the layer-wise case. 

Overall, the MVR2 with $\gamma=1$ in \citet{chang2025convergence} and LiMuon with option 1 in \citet{huang2025limuonlightfastmuon} is exactly Gluon-MVR-1 with the spectral norm and $p=1$. Moreover, from the numerical experiments therein, the performance of Gluon-MVR-1 is the best among the compared algorithms. While we have a comprehensive study of the MVR adaptation to Muon-type methods in three possible ways, which results in three algorithms---Gluon-MVR-1/2/3. All the three algorithms can obtain ${\cal O} \left(  \frac{1}{K^{1/3}}  \right)$ convergence rate. We study them for the LMO with general norm and under the layer-wise $(L^0, L^1)$-smoothness assumption. Furthermore, the numerical experiments show the superior performance of Gluon-MVR-2 and Gluon-MVR-3 over Gluon-MVR-1. This highlights the significance of our work in the MVR adaptation direction.

\paragraph{Contributions}
Our key contributions can be summarized as follows: \\
i), In the star-convex case, we adapt the momentum variance reduction in STORM \citep{cutkosky2019momentum} to Muon \citep{kovalev2025understanding}, and improve the convergence rate from ${\tilde {\cal O}} \left(  \frac{1}{K^{1/3}}  \right)$  to ${\tilde {\cal O}} \left(  \frac{1}{K^{1/2}}  \right)$ . \\
ii), In the non-convex case, we incorporate MVR into Gluon \citep{riabinin2025gluon} in three different ways, which results in three algorithms: Gluon-MVR-1, Gluon-MVR-2, and Gluon-MVR-3. We adopt the layer-wise $(L^0, L^1)$-smoothness assumption (Assumption \ref{as:L0L1smooth}) in Gluon to capture the layer-wise geometry of neural networks. All of these three methods achieve ${\cal O} \left(  \frac{1}{K^{1/3}}  \right)$ convergence rate, which is optimal \citep{arjevani2023lower}. We also study Gluon-MVR-1 with the decreasing step size, which allows us to use larger step size in the early stage of training, and ${ \tilde {\cal O}} \left(  \frac{1}{K^{1/3}}  \right)$ convergence rate is obtained within some logarithmic factors. \\
iii), We also provide convergence results for Gluon with constant step size. ${\cal O} \left(  \frac{1}{K^{1/4}}  \right)$ convergence rate is achieved and an explanation is given for its superior practical performance by the iteration complexity. \\
iv), We conduct numerical experiments by training NanoGPT with 124M parameters on FineWeb-10B dataset. Compared to Gluon, our algorithms converge faster and could achieve lower validation losses, which validates our theoretical predictions that MVR reduces stochastic noise in momentum estimates, enabling faster convergence.

\section{MUON-MVR IN THE STAR-CONVEX CASE}

In this section, we consider the star-convex case, i.e., $f$ satisfies the following assumption. We assume that there exists a solution $X^* \in {\cal S}$ to problem (\ref{p:p}) in this section. 
\begin{assumption}[Star-convexity]\label{eq:cvx}
We assume that the objective function $f(x)$ is star-convex, that is, the following inequality holds:
\begin{equation}
	f(\beta X^* + (1 - \beta)X) \leq \beta f(X^*) + (1-\beta)f(X), \nonumber 
\end{equation}
for all $X \in {\cal S}$ and $\beta \in [0, 1]$, where $X^* \in {\cal S}$ is a solution. 
\end{assumption}

{\bf Stochastic gradient estimator.}
We assume access to a stochastic estimator $\nabla f_{\xi} : {\cal S} \to {\cal S}$ of the gradient $\nabla f(\cdot)$, where $\xi \sim \cD$ is a random variable sampled from a probability distribution $\cD$. 
\begin{assumption}\label{eq:sigma}
We assume that the stochastic gradient estimator $\nabla f_{\xi}$ is unbiased and has bounded variance, that is, the following relations hold: $\E_{\xi \sim \cD} [\nabla f_{\xi} (X)] = \nabla f(X)$ and 
\begin{equation}
	\E_{\xi \sim \cD} [ \|\nabla f_{\xi} (X)- \nabla f(X)\|_2^2 ] \leq \sigma^2
	\quad\text{for all}\;
	X \in {\cal S}, \nonumber 
\end{equation}
where $\sigma \geq 0$ is a positive variance parameter, and $\|\cdot\|_2$ is the standard Euclidean norm induced by the inner product. 
\end{assumption}

{\bf Non-Euclidean norm setting and Lipschitz continuous gradient.}
We assume that vector space ${\cal S}$ is equipped with a norm $\| \cdot\| :  {\cal S} \to \R_+$, which possibly does not coincide with the Euclidean norm $\| \cdot\|_2$. 
\begin{assumption}\label{eq:L}
We assume that the stochastic gradient $\nabla f_{\xi}$ is mean-squared Lipschitz continuous with respect to the norm $\| \cdot\|$, that is, the following inequality holds:
\begin{equation}
	\E_{\xi\sim\cD} [ \| \nabla f_{\xi}(X) - \nabla f_{\xi} (Y)\|_\star^2 ]\leq L^2  \|X-Y\|^2, \nonumber 
\end{equation}
for all $	X, Y \in {\cal S}$, where $L > 0$ is the gradient Lipschitz constant, and $\| \cdot\|_\star : {\cal S} \to \R_+$ is the dual norm associated with $\| \cdot\|$, i.e., $\|X\|_\star = \sup_{\| Y\|\leq 1} \langle X, Y \rangle$ for all $X \in {\cal S}$.
\end{assumption}
In particular, this assumption implies that the gradient $\nabla f(\cdot)$ is $L$-Lipschitz with respect to the norm $\| \cdot\|$, that is, the following inequality holds:
\begin{equation}
	\| \nabla f(X) - \nabla f(Y) \|_\star \leq L \|X-Y\|
\end{equation}
for all $X, Y \in {\cal S}$. It is important to highlight that while Assumption \ref{eq:L} uses the dual norm $\| \cdot\|_\star$ to measure the difference between the gradients, the variance in Assumption \ref{eq:sigma} is measured with respect to the Euclidean norm $\| \cdot\|_2$, which is necessary to properly utilize the unbiasedness property of the stochastic gradient estimator $\nabla f_{\xi}$. Therefore, we need to provide a connection between these norms using the following assumption. 
\begin{assumption}\label{eq:rho}
We assume that $\|X\|_2 \leq  \|X\|_\star \leq \rho \cdot \|X\|_2$, for all $X \in {\cal S}$, where $\rho > 0$ is a positive constant.
\end{assumption}

The following is the Muon-MVR algorithm. We apply the momentum variance reduction technique  in STORM \citep{cutkosky2019momentum}. That is, we add the term $(1-\alpha) (\nabla f_{\xi^{k+1}} (X^{k+1}) - \nabla f_{\xi^{k+1}} (X^k))$ to the update of the momentum in Muon \citep{kovalev2025understanding}: $M^{k+1} = (1-\alpha) M^k + \alpha \nabla f_{\xi^{k+1}} (X^{k+1})$. Then we get the update of $M^k$ in (\ref{eq:m}) in Algorithm \ref{alg:muon-mvr}. 

\begin{algorithm}
	\caption{Muon-MVR}
	\label{alg:muon-mvr}
	\begin{algorithmic}[1]
		\State \textbf{Input:} $M^0 = \nabla f_{\xi^0} (X^0)$, where $\xi^0 \sim \cD$, $\alpha \in (0, 1)$, weight decay $\beta \in (0, 1)$
		\For{$k=0,1, ..., K-1$}
		\State Compute $X^{k+1}$ over $\mathcal{B}^k := \{X \in \mathcal{S} : \|X - X^k\| \leq  \eta\}$: as follows:
		\begin{align}\label{eq:x}
			X^{k+1} = \arg\min_{X\in\mathcal{B}^k } \langle M^k,X-X^k \rangle 
		\end{align}
		\State Let $X^{k+1} = (1-\beta)X^{k+1}$, sample $\xi^{k+1} \sim \cD$ and compute $M^{k+1}$ as follows:
		\begin{align}
			M^{k+1} =& (1-\alpha) (M^k - \nabla f_{\xi^{k+1}} (X^k) ) \nonumber  \\ 
			& + \nabla f_{\xi^{k+1}} (X^{k+1}) \label{eq:m}
		\end{align}
		\EndFor
	\end{algorithmic}
\end{algorithm}

We have the following convergence results for Muon-MVR. 

\begin{theorem}\label{th:muon-mvr}
	Let Assumptions \ref{eq:cvx}, \ref{eq:sigma}, \ref{eq:L}, and \ref{eq:rho} hold, $\alpha \geq \beta$, and $\eta \geq \beta\max\{\|X^0\|,\|X^*\| \}$. Then, the following inequality holds for Algorithm \ref{alg:muon-mvr}:
	\begin{eqnarray*}
		\E[f(X^K) - f(X^*)]
		&\leq& (1-\beta)^K \left(  f(X^0) - f(X^*)  \right) \\ 
		&&  +  \frac{4\eta^2 L}{\beta}  + \frac{4\sqrt{\alpha}\eta \rho \sigma}{\beta}  + \frac{8\rho \eta^2 L}{\sqrt{\alpha} \beta} \\ 
		&& + K (1-\beta)^{K-1} 2\eta \rho \sigma. 
	\end{eqnarray*}
\end{theorem}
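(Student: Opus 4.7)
The plan is to couple a per-iteration contraction $\E[f(X^{k+1}) - f(X^*)] \le (1-\beta)\E[f(X^k) - f(X^*)] + \text{error}$ with a STORM-style variance recursion for $\epsilon^k \eqdef M^k - \nabla f(X^k)$, then unroll both over $K$ steps. I first show by induction that $\|X^k\| \le \eta/\beta$ for all $k$, using $\|X^{k+1}\| = (1-\beta)\|X^k - \eta s^k\|$ (where $s^k$ is the unit LMO direction satisfying $\langle M^k, s^k\rangle = -\|M^k\|_\star$) together with $\eta \ge \beta\|X^0\|$ for the base case. A consequence used throughout is $\|X^{k+1} - X^k\| \le \beta\|X^k\| + (1-\beta)\eta \le 2\eta$.

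For the descent step I start from $L$-smoothness of $f$ in $\|\cdot\|$ and substitute $X^{k+1} - X^k = -\beta X^k - (1-\beta)\eta s^k$ and $\nabla f(X^k) = M^k - \epsilon^k$. Three ingredients combine: LMO optimality gives $\langle M^k, -(1-\beta)\eta s^k\rangle = -(1-\beta)\eta\|M^k\|_\star$; star-convexity supplies the first-order bound $\langle \nabla f(X^k), X^k - X^*\rangle \ge f(X^k) - f(X^*)$, yielding $-\beta\langle \nabla f(X^k), X^k\rangle \le -\beta(f(X^k) - f(X^*)) + \beta\|X^*\|\,\|\nabla f(X^k)\|_\star$; and the hypothesis $\eta \ge \beta\|X^*\|$, together with $\|\nabla f(X^k)\|_\star \le \|M^k\|_\star + \|\epsilon^k\|_\star$ and the Lipschitz bound $\|\nabla f(X^k)\|_\star = \|\nabla f(X^k) - \nabla f(X^*)\|_\star \le L\|X^k - X^*\| \le 2L\eta/\beta$ (using $\nabla f(X^*)=0$ from star-convex first-order optimality), absorbs the cross term and reduces the right-hand side to $-\beta(f(X^k) - f(X^*)) + 2L\eta^2 + 2\eta\|\epsilon^k\|_\star$. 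Adding $\tfrac{L}{2}\|X^{k+1} - X^k\|^2 \le 2L\eta^2$, taking expectation, and converting $\|\epsilon^k\|_\star \le \rho\|\epsilon^k\|_2$ via Assumption~\ref{eq:rho} gives
\[
\E[f(X^{k+1}) - f(X^*)] \le (1-\beta)\E[f(X^k) - f(X^*)] + 4L\eta^2 + 2\eta\rho\,\E\|\epsilon^k\|_2.
\]

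For the MVR recursion I use the decomposition
\[
\epsilon^{k+1} = (1-\alpha)\epsilon^k + (1-\alpha)B_1 + \alpha B_3,
\]
with $B_1 \eqdef (\nabla f_{\xi^{k+1}}(X^{k+1}) - \nabla f_{\xi^{k+1}}(X^k)) - (\nabla f(X^{k+1}) - \nabla f(X^k))$ and $B_3 \eqdef \nabla f_{\xi^{k+1}}(X^{k+1}) - \nabla f(X^{k+1})$, both with zero conditional mean given $\sigma(\xi^0,\dots,\xi^k)$. Expanding $\|\cdot\|_2^2$, the cross terms with $\epsilon^k$ vanish, Assumption~\ref{eq:L} combined with $\|\cdot\|_2 \le \|\cdot\|_\star$ gives $\E\|B_1\|_2^2 \le L^2\E\|X^{k+1} - X^k\|^2 \le 4L^2\eta^2$, and Assumption~\ref{eq:sigma} gives $\E\|B_3\|_2^2 \le \sigma^2$. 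The resulting linear recursion
\[
\E\|\epsilon^{k+1}\|_2^2 \le (1-\alpha)^2 \E\|\epsilon^k\|_2^2 + 8L^2\eta^2 + 2\alpha^2\sigma^2,
\]
with initial condition $\E\|\epsilon^0\|_2^2 \le \sigma^2$, solves to $\E\|\epsilon^k\|_2 \le O(L\eta/\sqrt\alpha) + O(\sqrt\alpha\,\sigma) + (1-\alpha)^k\sigma$ after taking square roots and splitting via $\sqrt{a+b+c} \le \sqrt a + \sqrt b + \sqrt c$.

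Finally, I unroll the descent bound and plug the three pieces of $\E\|\epsilon^k\|_2$ into $\sum_{k=0}^{K-1}(1-\beta)^{K-1-k}\,\E\|\epsilon^k\|_2$. The two stationary parts give a geometric sum bounded by $1/\beta$, producing the $4L\eta^2/\beta$, $O(L\rho\eta^2/(\sqrt\alpha\beta))$, and $O(\sqrt\alpha\,\eta\rho\sigma/\beta)$ terms. For the transient $(1-\alpha)^k\sigma$ piece, the condition $\alpha \ge \beta$ gives $(1-\alpha)^k \le (1-\beta)^k$, so $\sum_{k=0}^{K-1}(1-\beta)^{K-1-k}(1-\alpha)^k \le K(1-\beta)^{K-1}$, which produces the final $2K\eta\rho\sigma(1-\beta)^{K-1}$ term in the theorem. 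The main obstacle is the descent step: arranging weight decay, star-convexity, and the non-Euclidean LMO so that a clean $(1-\beta)$ contraction survives after controlling the unwanted $\beta\|X^*\|\,\|M^k\|_\star$ cross term, which is exactly what $\eta \ge \beta\|X^*\|$ enables; the condition $\alpha \ge \beta$ is only needed in the last summation to prevent an extra $1/(\alpha-\beta)$-type inflation on the transient error.
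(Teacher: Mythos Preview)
Your proposal is correct and follows essentially the same two-lemma structure as the paper: a per-step contraction inequality (the paper's Lemma~\ref{lem:tr}, cited from Kovalev) plus a dual-norm bound on $\E\|M^k-\nabla f(X^k)\|_\star$ (the paper's Lemma~\ref{lem:mvr}), followed by unrolling with the condition $\alpha\ge\beta$ used exactly as you describe to control the transient sum. The only organizational difference is that the paper unrolls $\epsilon^k=(1-\alpha)^k\epsilon^0+\sum_{i}(1-\alpha)^{k-i}\Delta_i$ and applies martingale orthogonality to the full sum, whereas you run a one-step recursion on $\E\|\epsilon^k\|_2^2$ and take a square root; these are equivalent and actually yield slightly smaller constants in your version.

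Two minor points to clean up. First, there is a sign inconsistency: with $\langle M^k,s^k\rangle=-\|M^k\|_\star$ the update is $X^{k+1}-X^k=-\beta X^k+(1-\beta)\eta s^k$, not $-\,(1-\beta)\eta s^k$; once corrected, your absorption of the $\beta\|X^*\|\|\nabla f(X^k)\|_\star$ term via $\|\nabla f(X^k)\|_\star\le\|M^k\|_\star+\|\epsilon^k\|_\star$ and then $\beta\eta\|M^k\|_\star\le\beta\eta(\|\nabla f(X^k)\|_\star+\|\epsilon^k\|_\star)\le 2L\eta^2+\beta\eta\|\epsilon^k\|_\star$ goes through and produces exactly $4L\eta^2+2\eta\|\epsilon^k\|_\star$. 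Second, $\nabla f(X^*)=0$ follows from first-order optimality of the unconstrained minimizer alone; star-convexity is not needed there.
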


\begin{corollary}\label{co:muon-mvr}
	Under the consitions of Theorem \ref{th:muon-mvr}, assume $\|X^0\| \leq \|X^*\| = D$. Let $\eta = \beta D$, and $\alpha = \beta = \frac{2 \ln K}{K}$. Then we have 
	\begin{align*}
	\E \left[  f(X^K) - f(X^*)  \right] &\leq \frac{f(X^0) - f(X^*)}{K^2}  +  \frac{4\rho \sigma D \ln K}{(1-\beta)K^2} \\ 
	&  +  \frac{8LD^2 \ln K}{K}  + \frac{4\sqrt{2}\rho \sigma D \sqrt{\ln K}}{\sqrt{K}} \\ 
	& +  \frac{8\sqrt{2}\rho LD^2\sqrt{\ln K}}{\sqrt{K}}. 
	\end{align*}
\end{corollary}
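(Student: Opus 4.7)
The plan is that this corollary is essentially a substitution exercise into Theorem \ref{th:muon-mvr}, so the proof outline is short. First I would verify that the hypotheses of Theorem \ref{th:muon-mvr} hold for the stated parameter choices: with $\alpha=\beta$ we have $\alpha\geq\beta$, and with $\eta=\beta D$ and $\|X^0\|\leq\|X^*\|=D$ we have $\eta\geq\beta\max\{\|X^0\|,\|X^*\|\}$, so the theorem applies.

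Next I would plug $\eta=\beta D$, $\alpha=\beta=2\ln K/K$ into each of the five terms on the right-hand side of the bound in Theorem \ref{th:muon-mvr} and simplify algebraically. Concretely: $4\eta^2 L/\beta=4\beta D^2 L=8LD^2\ln K/K$; $4\sqrt{\alpha}\,\eta\rho\sigma/\beta=4\sqrt{\alpha}\,D\rho\sigma=4\sqrt{2}\,\rho\sigma D\sqrt{\ln K}/\sqrt{K}$; and $8\rho\eta^2 L/(\sqrt{\alpha}\,\beta)=8\rho\beta D^2 L/\sqrt{\alpha}=8\sqrt{2}\,\rho LD^2\sqrt{\ln K}/\sqrt{K}$. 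These reproduce three of the five summands in the corollary with no inequality slack, just arithmetic.

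The remaining two terms involve the geometric factors $(1-\beta)^K$ and $K(1-\beta)^{K-1}$, for which I would use the standard bound $(1-\beta)^K\leq e^{-\beta K}=e^{-2\ln K}=K^{-2}$. This immediately yields $(1-\beta)^K(f(X^0)-f(X^*))\leq (f(X^0)-f(X^*))/K^2$. For the last term I would write $(1-\beta)^{K-1}=(1-\beta)^K/(1-\beta)\leq 1/((1-\beta)K^2)$, so that $K(1-\beta)^{K-1}\cdot 2\eta\rho\sigma\leq 2\beta D\rho\sigma/((1-\beta)K)=4\rho\sigma D\ln K/((1-\beta)K^2)$, matching the remaining summand.

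The main (minor) obstacle is making sure the two exponential estimates are tight enough to give exactly the stated $K^{-2}\ln K$ scaling on the noise tail, rather than a weaker $K^{-1}$ bound; this is where the specific choice $\beta=2\ln K/K$ (as opposed to, say, $\ln K/K$) is used, since it forces $e^{-\beta K}=K^{-2}$. Beyond that, no inequality work beyond $(1-x)^K\leq e^{-xK}$ is required, and summing the five resulting terms recovers the bound in the corollary verbatim.
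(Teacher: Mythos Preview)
The proposal is correct and follows essentially the same approach as the paper: substitute $\eta=\beta D$ and $\alpha=\beta=2\ln K/K$ into each term of Theorem~\ref{th:muon-mvr}, and use $(1-\beta)^K\leq e^{-\beta K}=K^{-2}$ (the paper writes this as $(1-x)^{1/x}\leq e^{-1}$) for the two geometric factors. Your explicit verification of the theorem's hypotheses is a nice addition that the paper leaves implicit.
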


The obtained ${\tilde {\cal O}} \left(  \frac{1}{K^{1/2}}  \right)$ convergence rate of Muon-MVR in Corollary \ref{co:muon-mvr} is faster than that of Muon in the star-convex case, which is ${\tilde {\cal O}} \left(  \frac{1}{K^{1/3}}  \right)$ \citep{kovalev2025understanding}.

\section{GLUON-MVR-1}

In this and following sections, we consider the non-convex and layer-wise case, where $X = [X_1, ..., X_p]$ with $X_i \in {\cal S}_i$. First, we introduce the assumptions. We adopt the following layer-wise $(L^0, L^1)$-smoothness assumption used in Gluon \citep{riabinin2025gluon} to enable us to explore the layer-wise geometry of neural networks in the training. 

\begin{assumption}\label{as:L0L1smooth}
The function $f: \mathcal{S} \mapsto \mathbb{R}$ is layer-wise $(L^0, L^1)$-smooth with constants $L^0 := (L^0_1, \ldots, L^0_p) \in \mathbb{R}_+^p$ and $L^1 := (L^1_1, \ldots, L^1_p) \in \mathbb{R}_+^p$. That is, the inequality
\begin{eqnarray*}
	&&\quad  \|\nabla_i f(X) - \nabla_i f(Y)\|_{(i)\star} \\ 
	&& \leq \left(L^0_i + L^1_i \|\nabla_i f(X)\|_{(i)\star}\right) \|X_i - Y_i\|_{(i)}
	\label{eq:layer_smooth}
\end{eqnarray*}
holds for all $i = 1, \ldots, p$ and all $X = [X_1, \ldots, X_p] \in \mathcal{S}$, $Y = [Y_1, \ldots, Y_p] \in \mathcal{S}$, where $\|\cdot\|_{(i)\star}$ is the dual norm associated with $\|\cdot\|_{(i)}$ (i.e., $\|X_i\|_{(i)\star} := \sup_{\|Z_i\|_{(i)} \leq 1} \langle X_i, Z_i \rangle_{(i)}$ for any $X_i \in \mathcal{S}_i$).
\end{assumption}

\begin{assumption}\label{as:boundedvariance}
	The stochastic gradient estimator $\nabla f_\xi : {\cal S} \to {\cal S}$ is unbiased and has bounded variance. That is, $\E_{\xi \sim {\cal D}} \left[  \nabla f_\xi (X)  \right] = \nabla f(X)$ for all $X \in {\cal S}$ and there exists $\sigma \geq 0$ such that
	$$
	\E_{\xi \sim {\cal D}} \left[  \|\nabla_i f_\xi (X) - \nabla_i f(X) \|_2^2 \right] \leq \sigma^2, 
	$$
	for any $X \in {\cal S}$ and $ i = 1, ..., p$. 
\end{assumption}

\begin{assumption}\label{as:rho}
	$\| X\|_{(i)\star} \leq \rho\|X\|_2$ for any $X$ and $i \in \{1, ..., p\}$. 
\end{assumption}

Noticed that $\rho$ in Assumtion \ref{as:rho} always exists because of the norm equivalence in finite dimensional inner product space. For instance, if $X$ is a $m\times n$ matrix with rank $r$, and the spectral norm is adopted, then $\rho = \sqrt{r}$.

\begin{assumption}\label{as:HV}
	There exists $\delta_i \geq 0$ for all $i \in \{  1, ..., p  \}$ such that 
	\begin{align}
		& \E [ \| \nabla_i f_{\xi} (X)  - \nabla_i f_{\xi} (Y) - (\nabla_i f(X) - \nabla_i f(Y))\|_2^2 ] \nonumber \\ 
		\leq & \delta_i^2 \|X_i-Y_i\|_{(i)}^2, \label{eq:HVdelta}
	\end{align}
	where $\| \cdot\|_2$ is the Euclidean norm. 
\end{assumption}

Assumption \ref{as:HV} characterizes the Hessian variance of the objective function, and $\delta_i$ can be upper-bounded by the expected Hessian variance. $\delta_i$ can also be upper-bounded by the expected smoothness constant since the left-hand size of (\ref{eq:HVdelta}) can actually be upper-bounded by $\E [  \| \nabla_i f_{\xi} (X)  - \nabla_i f_{\xi} (Y)\|_2^2  ]$. 

For the non-convex case, we apply the momentum variance reduction technique in STORM \citep{cutkosky2019momentum} by three different ways. The first one is Algorithm \ref{alg:gluon_cs_mvr}, which we call Gluon-MVR-1. The way of adapting MVR in Gluon-MVR-1 is the same as the way in Muon-MVR. That is, we change the momentum $M_i^k$ in Gluon \citep{riabinin2025gluon} to $M_i^k = \nabla_i f_{\xi^k} (X^k)  + \beta (M_i^{k-1} - \nabla_i f_{\xi^k} (X^{k-1}) )$, and use the constant step size. 

\begin{algorithm}
	\caption{Gluon-MVR-1}\label{alg:gluon_cs_mvr}
	\begin{algorithmic}[1]
		\State \textbf{Input:} Initial model parameters $X^0 = [X_1^0, \dots, X_p^0] \in \mathcal{S}$, momentum $M^0 = [M_1^0, \dots, M_p^0] \in \mathcal{S}$, momentum decay factors $\beta \in [0, 1)$ for all iterations $k \geq 0$
		\For{$k = 0, 1, 2, \dots, K - 1$}
		\State Sample $\xi^k \sim \mathcal{D}$
		\For{$i = 1, 2, \dots, p$}
		\State Compute stochastic gradient $\nabla_i f_{\xi^k}(X^k)$ 
		\State Update momentum $M_i^k = \nabla_i f_{\xi^k} (X^k)  + \beta (M_i^{k-1} - \nabla_i f_{\xi^k} (X^{k-1}) )$ for layer $i$
		\State Choose adaptive stepsize/radius $t_i \eta > 0$ 
		\State Update parameters for layer $i$ via LMO over $\mathcal{B}_i^k := \{X_i \in \mathcal{S}_i : \|X_i - X_i^k\|_{(i)} \leq t_i \eta\}$:
		\begin{equation}
			X_i^{k+1} = \text{LMO}_{\mathcal{B}_i^k}(M_i^k) := \arg \min_{X_i \in \mathcal{B}_i^k} \langle M_i^k, X_i \rangle_{(i)}
		\end{equation}
		\EndFor
		\State Update full parameter vector $X^{k+1} = [X_1^{k+1}, \dots, X_p^{k+1}]$
		\EndFor
	\end{algorithmic}
\end{algorithm}

The convergence results are stated in Theorem \ref{th:cs-Gluon-mvr} and Corollary \ref{co:Gluon-mvr-1}, where ${\cal O} \left(  \frac{1}{K^{1/3}}  \right)$ convergence rate can be achieved. 

\begin{theorem}\label{th:cs-Gluon-mvr}
	Let Assumptions \ref{as:L0L1smooth}, \ref{as:boundedvariance}, \ref{as:rho}, and Assumption \ref{as:HV} hold, and fix $\epsilon>0$. Let $X^0, ..., X^{K-1}$ be the iterates of Algorithm \ref{alg:gluon_cs_mvr}, and $M_i^0 = \nabla_i f_{\xi^0} (X^0)$. Denote $\alpha \eqdef 1-\beta$. 
	
	1. If $L_i^1 =0$, then
	\begin{eqnarray*}
		&& \min_{k=0, ..., K-1} \sum_{i=1}^p t_i \E \left[  \|\nabla_i f(X^k) \|_{(i)\star} \right] \\ 
		&\leq&  \frac{\Delta^0}{\eta K} + \frac{2\sum_{i=1}^p t_i \rho \sigma}{\alpha K} + 2\sqrt{\alpha} \sum_{i=1}^p t_i \rho \sigma  \\ 
		&& + \frac{2\sum_{i=1}^p \delta_i t_i^2 \rho  \eta}{\sqrt{\alpha}} + \frac{\sum_{i=1}^p L_i^0 t_i^2 \eta}{2}, 
	\end{eqnarray*}
	where $\Delta^0 \eqdef f(X^0) - \inf_{X \in {\cal S}} f(X)$. By choosing $\eta = K^{-2/3}$ and $\alpha = K^{-2/3}$, we have 
	$ \min_{k=0, ..., K-1} \sum_{i=1}^p t_i \E \left[  \|\nabla_i f(X^k) \|_{(i)\star} \right]  \leq {\cal O} \left(  \frac{1}{K^{1/3}}  \right)$. 
	
	2. If $L_i^1 \neq 0$, we let $\eta \leq \min_i \frac{1}{L_i^1 t_i}$. Then 
	\begin{eqnarray*}
		&& \min_{k=0, ..., K-1} \sum_{i=1}^p t_i \E \left[  \|\nabla_i f(X^k) \|_{(i)\star} \right] \\ 
		&\leq&  \frac{2\Delta^0}{\eta K} + \frac{4\sum_{i=1}^p t_i \rho \sigma}{\alpha K} + 4\sqrt{\alpha} \sum_{i=1}^p t_i \rho \sigma \\ 
		&& + \frac{4\sum_{i=1}^p \delta_i t_i^2 \rho \eta}{\sqrt{\alpha}} + {\sum_{i=1}^p L_i^0 t_i^2 \eta}. 
	\end{eqnarray*}
	By choosing the parameters as $\eta = \min \left(   \min_i \frac{1}{L_i^1 t_i},  K^{-2/3}  \right)$ and $\alpha = K^{-2/3}$, we can obtain that $\min_{k=0, ..., K-1} \sum_{i=1}^p t_i \E \left[  \|\nabla_i f(X^k) \|_{(i)\star} \right]  \leq {\cal O}\left(  \frac{1}{K^{1/3}} + \frac{\max_{i} L_i^1 t_i}{K}  \right). $

\end{theorem}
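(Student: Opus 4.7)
The plan is to combine a layer-wise descent lemma with a STORM-style recursion for the momentum error $e_i^k := M_i^k - \nabla_i f(X^k)$, and then balance $\eta$ and $\alpha$ to optimize the resulting bound. First I would invoke Assumption \ref{as:L0L1smooth} in its standard integrated descent form to obtain
\begin{align*}
& f(X^{k+1}) - f(X^k) \\
& \leq \sum_{i=1}^p \langle \nabla_i f(X^k), X_i^{k+1} - X_i^k\rangle_{(i)} \\
& \quad + \sum_{i=1}^p \tfrac{L_i^0 + L_i^1\|\nabla_i f(X^k)\|_{(i)\star}}{2}\|X_i^{k+1} - X_i^k\|_{(i)}^2.
\end{align*}
The LMO step in Algorithm \ref{alg:gluon_cs_mvr} gives $\|X_i^{k+1} - X_i^k\|_{(i)} \leq t_i\eta$ and $\langle M_i^k, X_i^{k+1}-X_i^k\rangle_{(i)} = -t_i\eta\|M_i^k\|_{(i)\star}$; writing $\nabla_i f(X^k) = M_i^k - e_i^k$ and applying the reverse triangle inequality and H\"older then yields
\begin{align*}
& \langle \nabla_i f(X^k), X_i^{k+1}-X_i^k\rangle_{(i)} \\
& \leq -t_i\eta\|\nabla_i f(X^k)\|_{(i)\star} + 2 t_i\eta\|e_i^k\|_{(i)\star}.
\end{align*}
Telescoping over $k = 0,\ldots,K-1$, taking expectations, and dividing by $K\eta$ reduces Part 1 to controlling $\frac{1}{K}\sum_{k=0}^{K-1}\E\|e_i^k\|_{(i)\star}$.

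The core step is the momentum-error recursion. Setting $\alpha = 1-\beta$, a direct rearrangement of the update rule in Algorithm \ref{alg:gluon_cs_mvr} gives
$$ e_i^{k+1} = (1-\alpha) e_i^k + \alpha A_i^{k+1} + (1-\alpha) B_i^{k+1}, $$
where $A_i^{k+1} := \nabla_i f_{\xi^{k+1}}(X^{k+1}) - \nabla_i f(X^{k+1})$ and $B_i^{k+1} := [\nabla_i f_{\xi^{k+1}}(X^{k+1}) - \nabla_i f_{\xi^{k+1}}(X^k)] - [\nabla_i f(X^{k+1}) - \nabla_i f(X^k)]$ are both zero-mean conditional on the sigma-algebra $\mathcal{F}^k$ generated by the past, so the cross term vanishes upon squaring. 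Combining $\|\alpha A + (1-\alpha) B\|_2^2 \leq 2\alpha^2\|A\|_2^2 + 2(1-\alpha)^2\|B\|_2^2$ with Assumptions \ref{as:boundedvariance} and \ref{as:HV} and the LMO radius bound produces the scalar recursion $\E\|e_i^{k+1}\|_2^2 \leq (1-\alpha)^2\E\|e_i^k\|_2^2 + 2\alpha^2\sigma^2 + 2(1-\alpha)^2\delta_i^2 t_i^2\eta^2$, whose unrolling, using $1-(1-\alpha)^2 \geq \alpha$ together with the initialization $M_i^0 = \nabla_i f_{\xi^0}(X^0)$ (so that $\E\|e_i^0\|_2^2 \leq \sigma^2$), yields $\E\|e_i^k\|_2^2 \leq (1-\alpha)^{2k}\sigma^2 + 2\alpha\sigma^2 + 2\delta_i^2 t_i^2\eta^2/\alpha$. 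Applying Jensen's inequality termwise, summing the geometric transient via $\sum_k(1-\alpha)^k \leq 1/\alpha$, splitting through $\sqrt{a+b+c}\leq\sqrt{a}+\sqrt{b}+\sqrt{c}$, and finally switching to the dual norm with Assumption \ref{as:rho}, one arrives at a bound of the form $\frac{\rho\sigma}{\alpha K} + \rho\sqrt{\alpha}\sigma + \frac{\rho\delta_i t_i\eta}{\sqrt{\alpha}}$, up to absolute constants. Substituting back into the telescoped descent inequality yields Part 1.

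For Part 2 the only new issue is the term $\sum_i \frac{L_i^1 t_i^2\eta^2}{2}\|\nabla_i f(X^k)\|_{(i)\star}$ that survives the descent lemma; the hypothesis $\eta \leq 1/(L_i^1 t_i)$ makes it at most $\frac{1}{2}\sum_i t_i\eta\|\nabla_i f(X^k)\|_{(i)\star}$, which is absorbed into the left-hand side and only doubles the remaining constants. Balancing $\eta = \alpha = K^{-2/3}$ (clipped in Part 2 by $\min_i 1/(L_i^1 t_i)$, which only inflates the $\Delta^0/(\eta K)$ term by the factor $\max_i L_i^1 t_i$) then delivers the announced $\mathcal{O}(K^{-1/3})$ rate. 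The main obstacle is keeping the bookkeeping across three norms consistent: the variance and Hessian-variance assumptions are stated in $\|\cdot\|_2$, whereas the descent analysis and the LMO optimality live in $\|\cdot\|_{(i)}$ and its dual; the constant $\rho$ from Assumption \ref{as:rho} must be threaded carefully so that the initial-variance term ends up with the correct $1/(\alpha K)$ scaling (which arises from summing the geometric transient, not from a Cauchy--Schwarz over $K$ iterates) while the steady-state terms retain the $\sqrt{\alpha}$ and $1/\sqrt{\alpha}$ balance that is ultimately responsible for the $K^{-1/3}$ rate.
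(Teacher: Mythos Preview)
Your proposal is correct and follows essentially the same approach as the paper: a layer-wise descent lemma combined with a STORM-type bound on the momentum error, then balancing $\eta$ and $\alpha$. The only organizational difference is that the paper first unrolls the vector recursion $\mu_i^k=\beta^k\mu_i^0+\sum_\tau\beta^{k-\tau}\alpha\gamma_i^\tau+\sum_\tau\beta^{k+1-\tau}Z_i^\tau$ and bounds each of the three sums directly in the dual norm (via Assumption~\ref{as:rho}, Jensen, and the i.i.d.\ property), whereas you run a scalar recursion on $\E\|e_i^k\|_2^2$ and convert to $\|\cdot\|_{(i)\star}$ at the end; both routes are standard and give the same final bound up to absolute constants.
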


From Theorem \ref{th:cs-Gluon-mvr}, we can get the iteration complexity in the following corollary. The proof is omitted due to its simplicity.
\begin{corollary}\label{co:Gluon-mvr-1}
Under the conditions of Theorem \ref{th:cs-Gluon-mvr}, to reach the precision $$ \min_{k=0, ..., K-1} \sum_{i=1}^p t_i \E \left[  \|\nabla_i f(X^k) \|_{(i)\star} \right]  \leq \epsilon,$$ it is sufficient to choose the parameters as follows: 
	\begin{align}
	\eta = & {\cal O} \left(  \min \left\{  \frac{\epsilon}{\sum_{i=1}^p L_i^0 t_i^2}, \frac{\epsilon}{\sum_{i=1}^p \delta_i t_i^2 \rho}, \right. \right.  \nonumber \\   
	& \left.  \left.  \hskip 15mm  \frac{\epsilon^2}{(\sum_{i=1}^p \delta_i t_i^2) (\sum_{i=1}^p t_i) \rho \sigma }  \right\}  \right), \\ 
	\alpha = &  {\cal O} \left(  \min \left\{  1, \frac{\epsilon^2}{(\sum_{i=1}^p t_i)^2 \rho^2 \sigma^2}  \right\}  \right), \\
	K = & {\cal O} \left(  \max \left\{  \frac{\sum_{i=1}^p t_i \rho \sigma}{\epsilon}, \frac{(\sum_{i=1}^p t_i)^3 \rho^3 \sigma^3}{\epsilon^3},  \right. \right. \nonumber \\ 
	&  \left.  \left.  \hskip 15mm  \frac{\sum_{i=1}^p L_i^0 t_i^2 \Delta^0}{\epsilon^2},  \frac{\sum_{i=1}^p \delta_i t_i^2 \Delta^0 \rho }{\epsilon^2}, \right. \right. \nonumber \\ 
	& \left. \left.  \hskip 15mm \frac{(\sum_i \delta_i t_i^2) (\sum_i t_i) \Delta^0 \rho  \sigma}{\epsilon^3}  \right\}  \right), 
\end{align}
for $L_i^1 = 0$; and 
	\begin{align*}
	& \eta = {\cal O} \left(  \min \left\{  \frac{\epsilon}{\sum_{i=1}^p L_i^0 t_i^2}, \frac{\epsilon}{\sum_{i=1}^p \delta_i t_i^2 \rho }, \right. \right. \nonumber \\ 
	& \left. \left. \hskip 12mm  \frac{\epsilon^2}{(\sum_{i=1}^p \delta_i t_i^2) (\sum_{i=1}^p t_i) \rho \sigma },  \frac{1}{\max_i L_i^1 t_i} \right\}  \right), \\ 
	&  \alpha = {\cal O} \left(  \min \left\{  1, \frac{\epsilon^2}{(\sum_{i=1}^p t_i)^2 \rho^2 \sigma^2}  \right\}  \right), \\
	& K = {\cal O} \left(  \max \left\{  \frac{\sum_{i=1}^p t_i \rho \sigma}{\epsilon}, \frac{(\sum_{i=1}^p t_i)^3 \rho^3 \sigma^3}{\epsilon^3},  \right.  \right. \nonumber \\ 
	& \left. \left.  \hskip 12mm   \frac{\sum_{i=1}^p L_i^0 t_i^2 \Delta^0}{\epsilon^2}, \frac{\max_i L_i^1 t_i \Delta^0}{\epsilon}, \right. \right. \nonumber  \\ 
	& \hskip 8mm   \left. 	\left.  \frac{\sum_{i=1}^p \delta_i t_i^2 \Delta^0 \rho}{\epsilon^2},  \frac{(\sum_i \delta_i t_i^2) (\sum_i t_i) \Delta^0 \rho \sigma}{\epsilon^3}   \right\}  \right), 
\end{align*}
for $L_i^1 \neq 0$. 
\end{corollary}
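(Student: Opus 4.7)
The plan is to convert the convergence bound in Theorem \ref{th:cs-Gluon-mvr} directly into an iteration complexity by enforcing each individual term on the right-hand side to be at most $\epsilon$ (up to universal constants absorbed into ${\cal O}(\cdot)$), and then solving for $\eta$, $\alpha$, and $K$. Since every term is a monomial in these three parameters, the derivation reduces to elementary algebra and no auxiliary inequality is required.

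Consider first the $L_i^1 = 0$ case. The bound contains five summands, of which the third, fourth, and fifth involve only $\alpha$ and $\eta$, while the first two additionally involve $K$. The strategy is therefore to fix $\alpha$ and $\eta$ using the three $K$-free terms, and then choose $K$ using the remaining two. Requiring $2\sqrt{\alpha}\sum_i t_i \rho\sigma \leq \epsilon$ yields $\alpha = {\cal O}(\epsilon^2/((\sum_i t_i)^2\rho^2\sigma^2))$, and after capping at $\alpha \leq 1$ we recover the stated expression. Requiring $\tfrac{1}{2}\sum_i L_i^0 t_i^2 \eta \leq \epsilon$ gives $\eta = {\cal O}(\epsilon/\sum_i L_i^0 t_i^2)$. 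Requiring $2(\sum_i \delta_i t_i^2)\rho\eta/\sqrt{\alpha} \leq \epsilon$ and substituting both possible values of $\sqrt{\alpha}$ (either $1$ from the cap, or $\epsilon/((\sum_i t_i)\rho\sigma)$) produces two further upper bounds $\eta = {\cal O}(\epsilon/((\sum_i \delta_i t_i^2)\rho))$ and $\eta = {\cal O}(\epsilon^2/((\sum_i \delta_i t_i^2)(\sum_i t_i)\rho\sigma))$. Taking the minimum of these three candidates yields the stated formula for $\eta$.

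It remains to determine $K$. From $\Delta^0/(\eta K) \leq \epsilon$ we obtain $K \geq \Delta^0/(\epsilon\eta)$; substituting each of the three bounds on $\eta$ derived above gives three lower bounds on $K$, namely $(\sum_i L_i^0 t_i^2)\Delta^0/\epsilon^2$, $(\sum_i \delta_i t_i^2)\rho\Delta^0/\epsilon^2$, and $(\sum_i \delta_i t_i^2)(\sum_i t_i)\rho\sigma\Delta^0/\epsilon^3$. From $2(\sum_i t_i)\rho\sigma/(\alpha K) \leq \epsilon$ together with the two options for $\alpha$ we obtain $K \geq (\sum_i t_i)\rho\sigma/\epsilon$ and $K \geq (\sum_i t_i)^3\rho^3\sigma^3/\epsilon^3$. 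The maximum of these five lower bounds matches the stated expression for $K$.

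The $L_i^1 \neq 0$ case is handled identically, with the only modification being the additional constraint $\eta \leq \min_i 1/(L_i^1 t_i)$ imposed by the hypothesis of Theorem \ref{th:cs-Gluon-mvr}. This introduces an extra candidate $\eta = {\cal O}(1/\max_i L_i^1 t_i)$ in the minimum defining $\eta$, and via the first term $\Delta^0/(\eta K)$ a corresponding extra lower bound $K \geq (\max_i L_i^1 t_i)\Delta^0/\epsilon$ in the maximum defining $K$. Since the argument is purely algebraic, the only real difficulty is bookkeeping: ensuring that every term of the theorem contributes exactly one constraint and that these constraints are assembled into the joint min over $\eta$, the joint min over $\alpha$, and the joint max over $K$ displayed in the corollary.
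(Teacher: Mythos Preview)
Your proposal is correct and matches exactly the approach intended by the paper, which explicitly omits the proof ``due to its simplicity.'' You simply bound each summand in the Theorem~\ref{th:cs-Gluon-mvr} estimate by $\epsilon$ and solve the resulting monomial inequalities for $\alpha$, $\eta$, and $K$, handling the extra step-size constraint $\eta\le\min_i 1/(L_i^1 t_i)$ in the $L_i^1\neq 0$ case; this is precisely the elementary bookkeeping the paper has in mind.
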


\paragraph{Gluon with Constant Step Size}

We also provide the analysis for Gluon with constant step size in the Appendix. Even though the constant step size is tiny compared to the decreasing step size in Gluon \citep{riabinin2025gluon}, there are no logarithmic factors in the convergence results. In particular, when $L_i^1 \neq 0$,  the iteration complexity is 
\begin{align*}
K &= {\cal O} \left(  \max \left\{  \frac{\sum_{i=1}^p t_i \rho \sigma}{\epsilon}, \frac{(\sum_{i=1}^p t_i)^3 \rho^3 \sigma^3}{\epsilon^3},  \right. \right. \\ 
& \left. \left.   \frac{\sum_{i=1}^p L_i^0 t_i^2 \Delta^0}{\epsilon^2}, \frac{\max_i L_i^1 t_i \Delta^0}{\epsilon},    \frac{(\sum_{i=1}^p t_i)^2 \Delta^0 \rho^2 \sigma^2}{\max_i L_i^1 t_i \epsilon^3},  \right. \right.   \\ 
&   \left. 	\left.   \frac{(\sum_{i=1}^p L_i^0t_i^2) (\sum_{i=1}^p t_i)^2 \Delta^0 \rho^2 \sigma^2}{\epsilon^4}  \right\}  \right). 
\end{align*}
It was shown in \citet{riabinin2025gluon} that $L_i^0$ is usually close to zero in practice. Then the above iteration complexity would approach ${\cal O} \left(  \frac{1}{\epsilon^3}  \right)$. This may explain the superior performance of the Muon-type method in practice.

\section{GLUON-MVR-1 WITH DECREASING STEP SIZE}

In this section, we consider Gluon-MVR-1 with decreasing step size, which allows us to use larger step size in the early stage of training. First,we need two technical lemmas. 

\begin{algorithm}
	\caption{Gluon-MVR-1 with decreasing step size}\label{alg:ds_gluon_mvr}
	\begin{algorithmic}[1]
		\State \textbf{Input:} Initial model parameters $X^0 = [X_1^0, \dots, X_p^0] \in \mathcal{S}$, momentum $M^0 = [M_1^0, \dots, M_p^0] \in \mathcal{S}$, momentum decay factors $\beta^k \in [0, 1)$ for all iterations $k \geq 0$
		\For{$k = 0, 1, 2, \dots, K - 1$}
		\State Sample $\xi^k \sim \mathcal{D}$
		\For{$i = 1, 2, \dots, p$}
		\State Compute stochastic gradient $\nabla_i f_{\xi^k}(X^k)$ 
		\State Update momentum $M_i^k = \nabla_i f_{\xi^k} (X^k)  + \beta^k (M_i^{k-1} - \nabla_i f_{\xi^k} (X^{k-1}) )$ for layer $i$
		\State Choose adaptive stepsize/radius $t_i^k > 0$ 
		\State Update parameters for layer $i$ via LMO over $\mathcal{B}_i^k := \{X_i \in \mathcal{S}_i : \|X_i - X_i^k\|_{(i)} \leq t_i^k\}$:
		\begin{equation}
			X_i^{k+1} = \text{LMO}_{\mathcal{B}_i^k}(M_i^k) := \arg \min_{X_i \in \mathcal{B}_i^k} \langle M_i^k, X_i \rangle_{(i)}
		\end{equation}
		\EndFor
		\State Update full parameter vector $X^{k+1} = [X_1^{k+1}, \dots, X_p^{k+1}]$
		\EndFor
	\end{algorithmic}
\end{algorithm}

\begin{lemma}\label{lm:lm1ds}
	Let $\alpha^k = (k+1)^{-2/3}$, $\beta^k = 1 - (k+1)^{-2/3}$, and denote $\beta^{a:b} \eqdef \prod_{k=a}^b \beta^k$. Then for all $K \in \mathbb{N}_{\geq 1}$, we have $\sum_{k=0}^{K-1} (k+1)^{-\frac{2}{3}}  \sqrt{ \sum_{\tau =0}^k (\beta^{(\tau+1):k} \alpha^\tau)^2 } \leq (12+\sqrt{2e^3} \log K)$. 
\end{lemma}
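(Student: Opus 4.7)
The plan is to exploit the recursive structure of the inner sum. Let $b_k := \sum_{\tau=0}^{k}(\beta^{(\tau+1):k}\,\alpha^\tau)^2$. Peeling off the $\tau=k$ summand (whose empty product equals $1$), factoring $(\beta^k)^2$ out of the remaining terms, and using $\beta^k = 1-\alpha^k$, yields the one-line recursion
\begin{equation*}
b_k = (\alpha^k)^2 + (1-\alpha^k)^2\, b_{k-1}, \qquad b_0 = 1,
\end{equation*}
so the estimate is reduced to controlling a scalar recursion.

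The key claim I would then prove by induction is $b_k \leq \alpha^k = (k+1)^{-2/3}$ for every $k \geq 0$. The base case is an equality. For the inductive step, substituting $b_{k-1} \leq \alpha^{k-1}$ into the recursion reduces the target $b_k \leq \alpha^k$ to $(1-\alpha^k)\alpha^{k-1} \leq \alpha^k$, which rearranges as
\begin{equation*}
\frac{\alpha^{k-1}}{\alpha^k} = \left(1+\frac{1}{k}\right)^{2/3} \leq 1 + k^{-2/3} = 1 + \alpha^{k-1}.
\end{equation*}
Concavity of $u \mapsto (1+u)^{2/3}$ gives $(1+1/k)^{2/3} \leq 1 + 2/(3k)$, and then $2/(3k) \leq k^{-2/3}$ holds for every $k \geq 1$, which closes the induction.

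Given $b_k \leq (k+1)^{-2/3}$, we have $\sqrt{b_k} \leq (k+1)^{-1/3}$, and hence
\begin{equation*}
\sum_{k=0}^{K-1} (k+1)^{-2/3}\sqrt{b_k} \leq \sum_{k=0}^{K-1} (k+1)^{-1} = \sum_{m=1}^{K} \frac{1}{m} \leq 1 + \ln K,
\end{equation*}
which is already stronger than the stated bound $12 + \sqrt{2e^3}\log K$ for every $K \geq 1$.

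The main delicacy is pinning down the inductive invariant $b_k \leq \alpha^k$ in the first place: once it is guessed, the verification collapses to the single schedule-specific inequality $2/(3k) \leq k^{-2/3}$. Absent this invariant, one could instead unroll the definition directly and bound each product via $\beta^{(\tau+1):k} \leq \exp\bigl(-\sum_{j=\tau+1}^k \alpha^j\bigr) \leq \exp\bigl(-3[(k+2)^{1/3}-(\tau+2)^{1/3}]\bigr)$ and then approximate the resulting sum by an integral after the substitution $u = (\tau+2)^{1/3}$; this route also yields the $O((k+1)^{-2/3})$ bound on $b_k$ but with considerably messier constants, which likely explains the looser $12 + \sqrt{2e^3}\log K$ form appearing in the statement.
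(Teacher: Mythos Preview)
Your proof is correct and takes a genuinely different route from the paper. The paper unrolls the inner sum directly: it separates the contributions from $\tau\in\{0,1\}$ (bounded via a telescoping product and Lemma~10(ii) of H\"ubler et al.) from those with $\tau\ge 2$ (bounded via Lemma~10(iii) of the same reference, which gives the $\sqrt{2e^3}(k+2)^{-1/3}$ factor), and then sums the resulting $(k+1)^{-1}$ terms to produce the stated $12+\sqrt{2e^3}\log K$. Your approach instead recognises the scalar recursion $b_k=(\alpha^k)^2+(1-\alpha^k)^2 b_{k-1}$ and proves the sharp invariant $b_k\le\alpha^k$ by induction, reducing everything to the single elementary inequality $2/(3k)\le k^{-2/3}$. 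This buys you a cleaner, self-contained argument with no dependence on external lemmas and a tighter final bound $1+\ln K$; the paper's route, by contrast, is what you describe in your last paragraph as the ``unroll and exponentiate'' fallback, and its looser constants are exactly the price of that less structured decomposition.
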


\begin{lemma}\label{lm:lm2ds}
	Let $t_i^k = t_i (k+1)^{-2/3}$ with $t_i>0$, $\beta^k = 1 - (k+1)^{-2/3}$, and denote $\beta^{a:b} \eqdef \prod_{k=a}^b \beta^k$. Then for all $K \in \mathbb{N}_{\geq 1}$, we have $\sum_{k=0}^{K-1} (k+1)^{-\frac{2}{3}}  \sqrt{ \sum_{\tau =1}^k (\beta^{(\tau+1):k} t_i^\tau)^2 } \leq t_i(12+\sqrt{2e^3} \log K)$. 
\end{lemma}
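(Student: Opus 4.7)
The plan is to reduce Lemma \ref{lm:lm2ds} directly to Lemma \ref{lm:lm1ds}, since the only structural difference between the two statements is that the weights $t_i^\tau$ in the inner sum are simply a scalar multiple of the weights $\alpha^\tau$ used in Lemma \ref{lm:lm1ds}. Indeed, with $\alpha^\tau = (\tau+1)^{-2/3}$, the definition $t_i^\tau = t_i (\tau+1)^{-2/3}$ gives $t_i^\tau = t_i \cdot \alpha^\tau$, so the constant $t_i$ factors cleanly out of every term inside the square root.

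First I would observe that, since $t_i > 0$,
\begin{equation*}
\sqrt{\sum_{\tau = 1}^{k} (\beta^{(\tau+1):k} t_i^\tau)^2} \;=\; t_i \sqrt{\sum_{\tau = 1}^{k} (\beta^{(\tau+1):k} \alpha^\tau)^2}.
\end{equation*}
Next, the only remaining difference from the sum appearing in Lemma \ref{lm:lm1ds} is that the inner sum here starts at $\tau = 1$ rather than $\tau = 0$. Since the omitted $\tau = 0$ term $(\beta^{1:k}\alpha^0)^2$ is non-negative, the partial sum from $\tau = 1$ is upper-bounded by the full sum from $\tau = 0$, i.e.,
\begin{equation*}
\sqrt{\sum_{\tau = 1}^{k} (\beta^{(\tau+1):k} \alpha^\tau)^2} \;\leq\; \sqrt{\sum_{\tau = 0}^{k} (\beta^{(\tau+1):k} \alpha^\tau)^2}.
\end{equation*}
Multiplying by $(k+1)^{-2/3}$, summing over $k = 0, \dots, K-1$, and invoking Lemma \ref{lm:lm1ds} then yields the claimed bound $t_i(12 + \sqrt{2e^3}\log K)$.

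Since the reduction itself is elementary, the substantive work was already absorbed into Lemma \ref{lm:lm1ds}, whose proof must control the delicate interaction between the slowly decaying weights $\alpha^\tau = (\tau+1)^{-2/3}$ and the product $\beta^{(\tau+1):k}$ with $\beta^j = 1 - (j+1)^{-2/3}$. Given Lemma \ref{lm:lm1ds}, no further estimates on sums or products are required; Lemma \ref{lm:lm2ds} is essentially a rescaling by the constant $t_i$ combined with dropping a single non-negative term from the inner sum, and the main obstacle is therefore entirely offloaded to the preceding lemma.
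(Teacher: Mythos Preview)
Your proposal is correct and matches the paper's proof essentially line for line: the paper also observes $t_i^\tau = t_i\alpha^\tau$, factors out $t_i$, enlarges the inner sum from $\tau\ge 1$ to $\tau\ge 0$, and then invokes Lemma~\ref{lm:lm1ds}.
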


Then we can get ${ \tilde {\cal O}} \left(  \frac{1}{K^{1/3}}  \right)$ convergence rate within some logarithmic factors in Theorem \ref{th:ds-Gluon-mvr}. 
\begin{theorem}\label{th:ds-Gluon-mvr}
	Let Assumptions \ref{as:L0L1smooth}, \ref{as:boundedvariance}, \ref{as:rho}, and Assumption \ref{as:HV} hold. Let $X^0, ..., X^{K-1}$ be the iterates of Algorithm \ref{alg:ds_gluon_mvr} run with $\beta^k = 1 - (k+1)^{-2/3}$, $t_i^k = t_i(k+1)^{-2/3}$ for some $t_i>0$, and $M_i^0 = \nabla_i f_{\xi^0} (X^0)$. 
	
	1. If $L_i^1 =0$, then
	\begin{eqnarray*}
		&&	\min_{k=0, ..., K-1} \sum_{i=1}^p t_i \E \left[  \|\nabla_i f(X^k) \|_{(i)\star}  \right] \\ 
		&\leq& \frac{\Delta^0}{K^{1/3}} + \frac{1}{K^{1/3}}  \sum_{i=1}^p  \left[  \rho \sigma t_i (24 + 2\sqrt{2e^3} \log K)  \right. \\ 
		&&  \left.  \hskip 10mm + \rho \delta_i t_i^2 (24 + 2\sqrt{2e^3} \log K) + 2L_i^0 t_i^2  \right], 
	\end{eqnarray*}
	
	2. If $L_i^1 \neq 0$, then for $t_i = \frac{1}{L_i^1}$,  we have 
	\begin{eqnarray*}
		&&	\min_{k=0, ..., K-1} \sum_{i=1}^p t_i \E \left[  \|\nabla_i f(X^k) \|_{(i)\star}  \right] \\ 
		&\leq& \frac{\Delta^0}{K^{1/3}} + \frac{1}{K^{1/3}}  \sum_{i=1}^p  \left[  \frac{\rho \sigma}{L_i^1} (24 + 2\sqrt{2e^3} \log K)  \right. \\ 
		&&  \left. \hskip 9mm  + \frac{\rho \delta_i}{(L_i^1)^2}  (24 + 2\sqrt{2e^3} \log K) + \frac{2L_i^0}{(L_i^1)^2}  \right]. 
	\end{eqnarray*}

\end{theorem}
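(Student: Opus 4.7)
The plan is to combine a per-iteration layer-wise descent inequality with a sharp bound on the momentum error $e_i^k \eqdef M_i^k - \nabla_i f(X^k)$, in parallel with the proof of Theorem \ref{th:cs-Gluon-mvr} but tracking the $k$-dependent weights $t_i^k$. First I apply Assumption \ref{as:L0L1smooth} layer by layer, then use LMO optimality $\langle M_i^k, X_i^{k+1}-X_i^k\rangle = -t_i^k\|M_i^k\|_{(i)\star}$ and the triangle inequality $\|M_i^k\|_{(i)\star} \ge \|\nabla_i f(X^k)\|_{(i)\star} - \|e_i^k\|_{(i)\star}$ to obtain
\begin{equation*}
f(X^{k+1}) - f(X^k) \leq -c_1 \sum_i t_i^k \|\nabla_i f(X^k)\|_{(i)\star} + c_2 \sum_i t_i^k \|e_i^k\|_{(i)\star} + c_3 \sum_i L_i^0 (t_i^k)^2.
\end{equation*}
For the case $L_i^1\neq 0$, the choice $t_i=1/L_i^1$ gives $L_i^1 t_i^k \le 1$, which lets the extra term $\tfrac{L_i^1(t_i^k)^2}{2}\|\nabla_i f(X^k)\|_{(i)\star}$ in the $(L^0,L^1)$-descent lemma be absorbed into half of the gradient-descent term, recovering the same recursion up to absolute constants.

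Telescoping from $k=0$ to $K-1$, taking expectation, and using $\min_k\{\cdot\}\le$ weighted-average with weights $(k+1)^{-2/3}$ (where $\sum_{k=0}^{K-1}(k+1)^{-2/3}\ge \Omega(K^{1/3})$ by integral comparison), the contribution of $\Delta^0$ becomes $O(\Delta^0/K^{1/3})$ and that of $L_i^0(t_i^k)^2$ becomes $O(L_i^0 t_i^2/K^{1/3})$ since $\sum_k(k+1)^{-4/3}$ converges. What remains is to control $\sum_k t_i^k\,\E\|e_i^k\|_{(i)\star}$.

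For the momentum error I unroll the MVR recursion to
\begin{equation*}
e_i^k = \beta^{1:k}\,\epsilon_{\xi^0}(X^0) + \sum_{\tau=1}^{k} \beta^{(\tau+1):k}\bigl[\alpha^\tau \epsilon_{\xi^\tau}(X^\tau) + \beta^\tau(\epsilon_{\xi^\tau}(X^\tau) - \epsilon_{\xi^\tau}(X^{\tau-1}))\bigr],
\end{equation*}
where $\epsilon_\xi(X)\eqdef \nabla f_\xi(X)-\nabla f(X)$ and $\alpha^\tau=1-\beta^\tau$. Since the bracketed summands for distinct $\tau$ are mean-zero and orthogonal in $L_2$ conditional on the past, Assumptions \ref{as:boundedvariance} and \ref{as:HV}, together with $\|X^\tau-X^{\tau-1}\|_{(i)}\le t_i^{\tau-1}$ from the LMO ball, give
\begin{equation*}
\E\|e_i^k\|_2^2 \leq 2\sigma^2 \sum_{\tau=0}^{k}(\beta^{(\tau+1):k}\alpha^\tau)^2 + 2\delta_i^2 \sum_{\tau=1}^{k}(\beta^{(\tau+1):k}\beta^\tau t_i^{\tau-1})^2,
\end{equation*}
absorbing the initial-noise term into the $\tau=0$ slot by setting $\alpha^0\eqdef 1$. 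Applying $\sqrt{a+b}\le\sqrt{a}+\sqrt{b}$, then Assumption \ref{as:rho} to pass from $\|\cdot\|_2$ to $\|\cdot\|_{(i)\star}$ (factor $\rho$), multiplying by $t_i^k=t_i(k+1)^{-2/3}$ and summing over $k$, Lemma \ref{lm:lm1ds} bounds the $\sigma$-piece by $\rho\sigma t_i\bigl(12+\sqrt{2e^3}\log K\bigr)\cdot\sqrt{2}$, and Lemma \ref{lm:lm2ds} (after using $\beta^\tau\le 1$ and $t_i^{\tau-1}\le 2^{2/3}t_i^\tau$ to reindex) bounds the $\delta_i$-piece by $\rho\delta_i t_i^2\bigl(12+\sqrt{2e^3}\log K\bigr)\cdot\sqrt{2}$. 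Dividing by the $\Omega(K^{1/3})$ weighting assembles exactly the two stated rate expressions, with $t_i=1/L_i^1$ substituted in part 2.

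The delicate points I anticipate are (i) the index shift between $t_i^{\tau-1}$ appearing in the error expansion and $t_i^\tau$ in Lemma \ref{lm:lm2ds}, which only affects absolute constants; (ii) folding the initial-noise summand $\beta^{1:k}\epsilon_{\xi^0}(X^0)$ into the $\tau=0$ entry of Lemma \ref{lm:lm1ds} via the convention $\alpha^0=1$; and (iii) verifying the $(L^0,L^1)$-descent lemma in the variable-radius regime, for which the constraint $t_i^k\le 1/L_i^1$ enforced by $t_i=1/L_i^1$ is precisely what is needed. Since the two analytic lemmas already package the hardest estimates, the remainder is routine constant bookkeeping.
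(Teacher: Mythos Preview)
Your proposal is correct and follows essentially the same route as the paper: unroll $M_i^k-\nabla_i f(X^k)$ into the $\gamma$- and $Z$-martingale sums, bound each via orthogonality plus Assumptions \ref{as:boundedvariance} and \ref{as:HV}, then invoke Lemmas \ref{lm:lm1ds}--\ref{lm:lm2ds} and combine with the layer-wise descent inequality (\ref{eq:sumfgrad-cs}). One cosmetic difference: the paper splits the two martingale pieces via the triangle inequality on $\|\cdot\|_{(i)\star}$ \emph{before} passing to $L_2$, so it avoids your extra $\sqrt{2}$ factor and lands on the stated constant $24+2\sqrt{2e^3}\log K$ exactly; with your ordering (bound $\E\|e_i^k\|_2^2$ jointly, then $\sqrt{a+b}\le\sqrt a+\sqrt b$) the constant comes out $\sqrt{2}$ larger, so ``exactly'' is a slight overstatement. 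Your point (i) is well taken---the paper in fact writes $t_i^\tau$ rather than $t_i^{\tau-1}$ in the $Z$-sum, implicitly absorbing the same index shift you flag.
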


\section{GLUON-MVR-2}

In this section, we consider the second way of MVR adaptation. Recall that the momentum $M_i^k$ in Gluon is $M_i^k = \beta M_i^{k-1} +  \nabla_i f_{\xi^k} (X^k)$. Unlike replacing this $M_i^k$ with the MVR estimator \citep{cutkosky2019momentum} as in Gluon-MVR-1, we replace the stochastic gradient  $\nabla_i f_{\xi^k} (X^k)$ by the MVR estimator. This is also equivalent to applying an additional momentum to the MVR estimator. That is, $M_i^k = \beta M_i^{k-1} + (1-\beta) g_i^k$ with the MVR estimator 
\begin{equation}\label{eq:mvrest}
	g_i^k = \nabla_i f_{\xi^k} (X^k)  + (1-q) (g_i^{k-1} - \nabla_i f_{\xi^k} (X^{k-1}) ), 
\end{equation}
 for each layer. This results in the following algorithm: Gluon-MVR-2. 

\begin{algorithm}
	\caption{Gluon-MVR-2}\label{alg:gluon_cs_mvr-2}
	\begin{algorithmic}[1]
		\State \textbf{Input:} Initial model parameters $X^0 = [X_1^0, \dots, X_p^0] \in \mathcal{S}$, momentum $M^0 = [M_1^0, \dots, M_p^0] \in \mathcal{S}$, momentum decay factors $\beta \in (0, 1)$ and MVR parameter $q \in (0, 1]$ for all iterations $k \geq 0$
		\For{$k = 0, 1, 2, \dots, K - 1$}
		\State Sample $\xi^k \sim \mathcal{D}$
		\For{$i = 1, 2, \dots, p$}
		\State Compute stochastic gradient $\nabla_i f_{\xi^k}(X^k)$ 
		\State Update the MVR estimator $g_i^k = \nabla_i f_{\xi^k} (X^k)  + (1-q) (g_i^{k-1} - \nabla_i f_{\xi^k} (X^{k-1}) )$ for layer $i$
		\State Update momentum $M_i^k = \beta M_i^{k-1} + (1-\beta) g_i^k$  for layer $i$
		\State Choose adaptive stepsize/radius $t_i \eta > 0$ 
		\State Update parameters for layer $i$ via LMO over $\mathcal{B}_i^k := \{X_i \in \mathcal{S}_i : \|X_i - X_i^k\|_{(i)} \leq t_i \eta\}$:
		\begin{equation}\label{eq:Gluon-mvr-2-update}
			X_i^{k+1} = \text{LMO}_{\mathcal{B}_i^k}(M_i^k) := \arg \min_{X_i \in \mathcal{B}_i^k} \langle M_i^k, X_i \rangle_{(i)}
		\end{equation}
		\EndFor
		\State Update full parameter vector $X^{k+1} = [X_1^{k+1}, \dots, X_p^{k+1}]$
		\EndFor
	\end{algorithmic}
\end{algorithm}

The following theorem shows the convergence results of Gluon-MVR-2 under different parameter settings. We show how to choose parameters to obtain fast convergence rate in subsection \ref{subsec:cp-Gluon-MVR-2}. 

\begin{theorem}\label{th:cs-Gluon-mvr-2}
	Let Assumptions \ref{as:L0L1smooth}, \ref{as:boundedvariance}, \ref{as:rho}, and Assumption \ref{as:HV} hold. Let $X^0, ..., X^{K-1}$ be the iterates of Algorithm \ref{alg:gluon_cs_mvr-2}, and $M_i^0 = g_i^0 = \nabla_i f_{\xi^0} (X^0)$. Denote $\alpha \eqdef 1-\beta$ and $v \eqdef \frac{1-q}{1-\alpha}$. 
	
	1. If $L_i^1 =0$, then for $q < \alpha$, we have 
	\begin{eqnarray*}
		&& \min_{k=0, ..., K-1} \sum_{i=1}^p t_i \E \left[  \|\nabla_i f(X^k) \|_{(i)\star} \right] \\ 
		&\leq&   \frac{\Delta^0}{\eta K} + \frac{2\sum_{i=1}^p t_i \rho \sigma}{\alpha K} +       \frac{4\sqrt{q \alpha} \sum_{i=1}^p t_i  \rho \sigma}{\sqrt{(2-\alpha)(\alpha+\beta q)}}  \\ 
		&&  +  \frac{4\sqrt{\alpha} (1-q) \sum_{i=1}^p\delta_i t_i^2 \rho \eta}{\sqrt{(2-\alpha)(\alpha+\beta q)q}}   + \frac{2\sum_{i=1}^p L_i^0t_i^2 \eta}{\alpha} \\ 
		&&   +    \frac{4\sqrt{2\alpha} \sum_{i=1}^p t_i  \rho \sigma}{q K \sqrt{v+\alpha-1}}      + \frac{\sum_{i=1}^p L_i^0 t_i^2 \eta}{2}; 
	\end{eqnarray*}
	for $q\geq \alpha$, 
	\begin{eqnarray*}
		&& \min_{k=0, ..., K-1} \sum_{i=1}^p t_i \E \left[  \|\nabla_i f(X^k) \|_{(i)\star} \right] \\ 
		&\leq&  \frac{\Delta^0}{\eta K} + \frac{8\sum_{i=1}^p t_i \rho \sigma}{\alpha K} +       \frac{4\sqrt{q \alpha} \sum_{i=1}^p t_i  \rho \sigma}{\sqrt{(2-\alpha)(\alpha+\beta q)}} \\ 
		&&   +  \frac{4\sqrt{\alpha} (1-q) \sum_{i=1}^p\delta_i t_i^2 \rho \eta}{\sqrt{(2-\alpha)(\alpha+\beta q)q}}   \\ 
		&&  + \frac{2\sum_{i=1}^p L_i^0t_i^2 \eta}{\alpha} + \frac{\sum_{i=1}^p L_i^0 t_i^2 \eta}{2}. 
	\end{eqnarray*}
	
	2. If $L_i^1 \neq 0$, we let $\frac{\eta}{\alpha} \leq \min_{i}  \frac{1}{5L_i^1 t_i}$. Then we have $\left(  \frac{2}{\alpha} + \frac{1}{2}  \right) L_i^1 t_i \eta \leq \frac{1}{2}$ for all $i$, and for $q<\alpha$, 
	\begin{eqnarray*}
		&& \min_{k=0, ..., K-1} \sum_{i=1}^p t_i \E \left[  \|\nabla_i f(X^\tau) \|_{(i)\star} \right] \\ 
		&\leq&  \frac{2 \Delta^0}{\eta K} + \frac{4\sum_{i=1}^p t_i \rho \sigma}{\alpha K} +       \frac{8\sqrt{q \alpha} \sum_{i=1}^p t_i  \rho \sigma}{\sqrt{(2-\alpha)(\alpha+\beta q)}}  \\ 
		&&  +  \frac{8\sqrt{\alpha} (1-q) \sum_{i=1}^p\delta_i t_i^2 \rho \eta}{\sqrt{(2-\alpha)(\alpha+\beta q)q}}  \\ 
		&&   +    \frac{8\sqrt{2\alpha} \sum_{i=1}^p t_i  \rho \sigma}{q K \sqrt{v+\alpha-1}}   + \frac{4\sum_{i=1}^p L_i^0t_i^2 \eta}{\alpha} + {\sum_{i=1}^p L_i^0 t_i^2 \eta}; 
	\end{eqnarray*}
	for $q\geq \alpha$, we have 
	\begin{eqnarray*}
		&& \min_{k=0, ..., K-1} \sum_{i=1}^p t_i \E \left[  \|\nabla_i f(X^k) \|_{(i)\star} \right] \\ 
		&\leq&  \frac{2 \Delta^0}{\eta K} + \frac{16\sum_{i=1}^p t_i \rho \sigma}{\alpha K} +       \frac{8\sqrt{q \alpha} \sum_{i=1}^p t_i  \rho \sigma}{\sqrt{(2-\alpha)(\alpha+\beta q)}}  \\ 
		&&  +  \frac{8\sqrt{\alpha} (1-q) \sum_{i=1}^p\delta_i t_i^2 \rho \eta}{\sqrt{(2-\alpha)(\alpha+\beta q)q}}  \\ 
		&&   + \frac{4\sum_{i=1}^p L_i^0t_i^2 \eta}{\alpha} + {\sum_{i=1}^p L_i^0 t_i^2 \eta}. 
	\end{eqnarray*}
	
\end{theorem}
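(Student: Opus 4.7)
The plan is to follow the same overall architecture as the Gluon-MVR-1 analysis (Theorem \ref{th:cs-Gluon-mvr}), but with an extra layer of bookkeeping for the nested momentum. First, I would derive a per-iteration descent inequality by chaining the layer-wise $(L^0,L^1)$-smoothness (Assumption \ref{as:L0L1smooth}) along the telescope $X^{k,0}=X^k\to X^{k,1}\to\cdots\to X^{k,p}=X^{k+1}$ in which only block $j$ changes at step $j$. Combined with the LMO identities $\langle M_i^k, X_i^{k+1}-X_i^k\rangle_{(i)}=-t_i\eta\|M_i^k\|_{(i)\star}$ and $\|X_i^{k+1}-X_i^k\|_{(i)}=t_i\eta$, H\"older's inequality applied to $\langle \nabla_i f(X^{k,i-1})-M_i^k, X_i^{k+1}-X_i^k\rangle_{(i)}$, and the triangle bound $\|\nabla_i f(X^k)\|_{(i)\star}\leq \|M_i^k\|_{(i)\star}+\|E_i^k\|_{(i)\star}$ with $E_i^k \eqdef M_i^k-\nabla_i f(X^k)$, telescoping over $k=0,\dots,K-1$ reduces the task to controlling $\sum_{k=0}^{K-1}\E\|E_i^k\|_{(i)\star}$ for each layer $i$, at the price of the $L_i^0 t_i^2\eta$ and $\Delta^0/(\eta K)$ terms visible in the statement.

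Next, I would track two coupled stochastic recursions. For the inner MVR estimator error $e_i^k\eqdef g_i^k-\nabla_i f(X^k)$, the standard STORM decomposition $e_i^k=(1-q)e_i^{k-1}+q(\nabla_i f_{\xi^k}(X^k)-\nabla_i f(X^k))+(1-q)Z_k$, where $Z_k$ is the centered finite-difference noise controlled by Assumption \ref{as:HV}, together with $\|X_i^k-X_i^{k-1}\|_{(i)}\leq t_i\eta$, produces $\E\|e_i^k\|_2^2\leq(1-q)^2\E\|e_i^{k-1}\|_2^2+2q^2\sigma^2+2(1-q)^2\delta_i^2 t_i^2\eta^2$ with $\E\|e_i^0\|_2^2\leq\sigma^2$. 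For the outer momentum error, the identity $E_i^k=\beta E_i^{k-1}+\beta(\nabla_i f(X^{k-1})-\nabla_i f(X^k))+(1-\beta)e_i^k$ unrolls into a weighted sum of the $e_i^\tau$'s with kernel $\beta^{k-\tau}$. Substituting the $\E\|e_i^\tau\|_2^2$ bound yields a double geometric sum whose convergence ratio is precisely $v=(1-q)/\beta$, and Assumption \ref{as:rho} together with Jensen's inequality converts this into a bound on $\sum_k\E\|E_i^k\|_{(i)\star}$.

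The case split between $q<\alpha$ and $q\geq\alpha$ arises directly from this double geometric sum: when $v>1$ (equivalently $q<\alpha$) the inner STORM noise decays slower than the outer averaging, leaving a residual whose envelope is the factor $1/\sqrt{v+\alpha-1}$ visible in the extra $\tfrac{4\sqrt{2\alpha}\sum_i t_i\rho\sigma}{qK\sqrt{v+\alpha-1}}$ term, whereas for $v\leq 1$ this contribution is absorbed into the $\tfrac{8\sum_i t_i\rho\sigma}{\alpha K}$ term that replaces it. For $L_i^1\neq 0$, the extra smoothness piece $\tfrac12 L_i^1\|\nabla_i f(X^k)\|_{(i)\star}t_i^2\eta^2$ together with an analogous $(2/\alpha)L_i^1 t_i^2\eta$ contribution emerging from the noise bound is absorbed into the left-hand side by the step-size condition $\eta/\alpha\leq\min_i 1/(5L_i^1 t_i)$, which ensures the combined coefficient of $\E\|\nabla_i f(X^k)\|_{(i)\star}$ is at most $t_i/2$; this absorption is what doubles every constant in the $L_i^1\neq 0$ statements. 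The main obstacle I anticipate is the precise bookkeeping of the double geometric sum in the second step: the splitting of $\sum_{\tau,s}\beta^{2(k-\tau)}(1-q)^{2(\tau-s)}$ must be chosen so that the resulting $\alpha,q,v$-dependence matches the clean combination $\sqrt{q\alpha/((2-\alpha)(\alpha+\beta q))}$ that appears in the theorem rather than a looser bound.
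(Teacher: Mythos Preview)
Your architecture matches the paper's, including the descent inequality, the case split on $v\gtrless1$, and the $L_i^1\neq0$ absorption. The one step to sharpen is the second moment of $\sum_\tau\alpha\beta^{k-\tau}e_i^\tau$: the $e_i^\tau$'s are \emph{not} martingale differences, and the paper handles the nonvanishing cross terms via the exact identity
\[
\E[\langle e_i^j,e_i^s\rangle]=(1-q)^{s-j}\,\E\|e_i^j\|_2^2\qquad(j<s),
\]
obtained by iterating the tower property through the inner recursion. This turns the expansion into $\sum_j\alpha^2\beta^{2(k-j)}\E\|e_i^j\|_2^2+2\sum_{j<s}\alpha^2\beta^{2(k-j)}v^{s-j}\E\|e_i^j\|_2^2$, after which Lemmas~\ref{lm:sumvs-j} and~\ref{lm:sum1-qj} sum the stationary piece and the transient $(1-q)^j\sigma^2$ piece separately. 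Your sketched double sum $\sum_{\tau,s}\beta^{2(k-\tau)}(1-q)^{2(\tau-s)}$ carries a \emph{squared} exponent on $1-q$; the cross-correlation contributes only a linear factor $v^{s-j}$, and it is this linear exponent that delivers exactly the constant $\sqrt{q\alpha/((2-\alpha)(\alpha+\beta q))}$ appearing in the theorem. Everything else---the drift term $\sum_\tau\beta^{k+1-\tau}S_i^\tau$ handled via Assumption~\ref{as:L0L1smooth} (producing the $L_i^0t_i\eta/\alpha$ and $L_i^1$-coupling contributions) and the doubling of constants when $L_i^1\neq0$---is as you describe.
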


\subsection{Choices of Parameters for Gluon-MVR-2}\label{subsec:cp-Gluon-MVR-2}

In this subsection, we analyze these parameter settings in Theorem \ref{th:cs-Gluon-mvr-2}, and show how to obtain fast convergence rate. We consider the case where $L_i^0 = 0$ first. From Theorem \ref{th:cs-Gluon-mvr-2}, for $q\geq \alpha$, the upper-bound becomes 
\begin{eqnarray*}
&& \min_{k=0, ..., K-1} \sum_{i=1}^p t_i \E \left[  \|\nabla_i f(X^\tau) \|_{(i)\star} \right] \\ 
&\leq& {\cal O} \left(   \frac{1}{\eta K} + \frac{1}{\alpha K}  + \sqrt{\alpha}  + \frac{\sqrt{\alpha} \eta}{q} + \frac{\eta}{\alpha} \right), 
\end{eqnarray*}
where $ \frac{1}{\eta K} + \frac{\eta}{\alpha} + \sqrt{\alpha} \geq \frac{2\sqrt{2}}{K^{1/4}}$. Thus we can get at most ${\cal O} \left( \frac{1}{K^{1/4}} \right)$ convergence rate. For $q<\alpha$, we can get  
\begin{eqnarray*}
	&& \min_{k=0, ..., K-1} \sum_{i=1}^p t_i \E \left[  \|\nabla_i f(X^\tau) \|_{(i)\star} \right] \\ 
	&\leq& {\cal O} \left(   \frac{1}{\eta K} + \frac{1}{\alpha K}  + \sqrt{q}  + \frac{\eta}{\sqrt{q}} + \frac{\eta}{\alpha}  + \frac{1}{qK} \right). 
\end{eqnarray*}
By choosing $\eta = q = \frac{1}{K^{2/3}}$ and $\alpha \geq \frac{1}{K^{1/3}}$, we can achieve ${\cal O} \left(  \frac{1}{K^{1/3}}  \right)$ convergence rate. 

For the case where $L_i^1 \neq 0$, we can also get the above results as long as we choose $\eta$ and $\alpha$ such that $\frac{\eta}{\alpha} \leq \min_{i}  \frac{1}{5L_i^1 t_i}$ is satisfied, which generally holds for large $K$.

\section{GLUON-MVR-3}

In this section, we introduce the last way of MVR adaptation to Gluon, which is actually the combination of the first two ways. Adding the term $\beta (  \nabla_i f_{\xi^k} (X^k) -  \nabla_i f_{\xi^k} (X^{k-1}) )$ to the $M_i^k$ in Gluon-MVR-2 results in the update of $M_i^k$ in Gluon-MVR-3. That is, $M_i^k = \beta M_i^{k-1} + (1-\beta) g_i^k + \beta (  \nabla_i f_{\xi^k} (X^k) -  \nabla_i f_{\xi^k} (X^{k-1}) )$ with the MVR estimator $g_i^k$ in (\ref{eq:mvrest}). 

\begin{algorithm}
	\caption{Gluon-MVR-3}\label{alg:gluon_cs_mvr-3}
	\begin{algorithmic}[1]
		\State \textbf{Input:} Initial model parameters $X^0 = [X_1^0, \dots, X_p^0] \in \mathcal{S}$, momentum $M^0 = [M_1^0, \dots, M_p^0] \in \mathcal{S}$, momentum decay factors $\beta \in (0, 1)$ and MVR parameter $q \in (0, 1]$ for all iterations $k \geq 0$
		\For{$k = 0, 1, 2, \dots, K - 1$}
		\State Sample $\xi^k \sim \mathcal{D}$
		\For{$i = 1, 2, \dots, p$}
		\State Compute stochastic gradient $\nabla_i f_{\xi^k}(X^k)$ 
		\State Update the MVR estimator $g_i^k = \nabla_i f_{\xi^k} (X^k)  + (1-q) (g_i^{k-1} - \nabla_i f_{\xi^k} (X^{k-1}) )$ for layer $i$
		\State Update momentum $M_i^k = \beta M_i^{k-1} + (1-\beta) g_i^k + \beta (  \nabla_i f_{\xi^k} (X^k) -  \nabla_i f_{\xi^k} (X^{k-1}) )$ for layer $i$
		\State Choose adaptive stepsize/radius $t_i \eta > 0$ 
		\State Update parameters for layer $i$ via LMO over $\mathcal{B}_i^k := \{X_i \in \mathcal{S}_i : \|X_i - X_i^k\|_{(i)} \leq t_i \eta\}$:
		\begin{equation}\label{eq:Gluon-mvr-3-update}
			X_i^{k+1} = \text{LMO}_{\mathcal{B}_i^k}(M_i^k) := \arg \min_{X_i \in \mathcal{B}_i^k} \langle M_i^k, X_i \rangle_{(i)}
		\end{equation}
		\EndFor
		\State Update full parameter vector $X^{k+1} = [X_1^{k+1}, \dots, X_p^{k+1}]$
		\EndFor
	\end{algorithmic}
\end{algorithm}

The following results establish the convergence properties of Gluon-MVR-3 under different parameter settings. We show how to choose parameters to obtain fast convergence rate in subsection \ref{subsec:cp-Gluon-MVR-3}. 

\begin{theorem}\label{th:cs-Gluon-mvr-3}
	Let Assumptions \ref{as:L0L1smooth}, \ref{as:boundedvariance}, \ref{as:rho}, and Assumption \ref{as:HV} hold. Let $X^0, ..., X^{K-1}$ be the iterates of Algorithm \ref{alg:gluon_cs_mvr-3}, and $M_i^0 = g_i^0 = \nabla_i f_{\xi^0} (X^0)$. Denote $\alpha \eqdef 1-\beta$ and $v \eqdef \frac{1-q}{1-\alpha}$. 
	
	1. If $L_i^1 =0$, then for $q < \alpha$, we have 
	\begin{eqnarray*}
		&& \min_{k=0, ..., K-1} \sum_{i=1}^p t_i \E \left[  \|\nabla_i f(X^k) \|_{(i)\star} \right] \\ 
		&\leq&   \frac{\Delta^0}{\eta K} + \frac{2\sum_{i=1}^p t_i \rho \sigma}{\alpha K} +       \frac{4\sqrt{q \alpha} \sum_{i=1}^p t_i  \rho \sigma}{\sqrt{(2-\alpha)(\alpha+\beta q)}}   \\ 
		&& +  \frac{4\sqrt{\alpha} (1-q) \sum_{i=1}^p\delta_i t_i^2 \rho \eta}{\sqrt{(2-\alpha)(\alpha+\beta q)q}}   +  \frac{2\sum_{i=1}^p \delta_i t_i^2 \eta}{\sqrt{\alpha}}  \\ 
		&&   +    \frac{4\sqrt{2\alpha} \sum_{i=1}^p t_i  \rho \sigma}{q K \sqrt{v+\alpha-1}}     + \frac{\sum_{i=1}^p L_i^0 t_i^2 \eta}{2}; 
	\end{eqnarray*}
	for $q\geq \alpha$, 
	\begin{eqnarray*}
		&& \min_{k=0, ..., K-1} \sum_{i=1}^p t_i \E \left[  \|\nabla_i f(X^k) \|_{(i)\star} \right] \\ 
		&\leq&  \frac{\Delta^0}{\eta K} + \frac{8\sum_{i=1}^p t_i \rho \sigma}{\alpha K} +       \frac{4\sqrt{q \alpha} \sum_{i=1}^p t_i  \rho \sigma}{\sqrt{(2-\alpha)(\alpha+\beta q)}}  \\ 
		&&  +  \frac{4\sqrt{\alpha} (1-q) \sum_{i=1}^p\delta_i t_i^2 \rho \eta}{\sqrt{(2-\alpha)(\alpha+\beta q)q}}  \\ 
		&&   +  \frac{2\sum_{i=1}^p \delta_i t_i^2 \eta}{\sqrt{\alpha}} + \frac{\sum_{i=1}^p L_i^0 t_i^2 \eta}{2}. 
	\end{eqnarray*}
	
	2.  If $L_i^1 \neq 0$, we let $\eta \leq \min_i \frac{1}{L_i^1 t_i}$. Then for $q<\alpha$, 
	\begin{eqnarray*}
		&& \min_{k=0, ..., K-1} \sum_{i=1}^p t_i \E \left[  \|\nabla_i f(X^\tau) \|_{(i)\star} \right] \\ 
		&\leq&  \frac{2 \Delta^0}{\eta K} + \frac{4\sum_{i=1}^p t_i \rho \sigma}{\alpha K} +       \frac{8\sqrt{q \alpha} \sum_{i=1}^p t_i  \rho \sigma}{\sqrt{(2-\alpha)(\alpha+\beta q)}}  \\ 
		&&  +  \frac{8\sqrt{\alpha} (1-q) \sum_{i=1}^p\delta_i t_i^2 \rho \eta}{\sqrt{(2-\alpha)(\alpha+\beta q)q}}  +  \frac{4\sum_{i=1}^p \delta_i t_i^2 \eta}{\sqrt{\alpha}}  \\ 
		&&   +    \frac{8\sqrt{2\alpha} \sum_{i=1}^p t_i  \rho \sigma}{q K \sqrt{v+\alpha-1}}    + {\sum_{i=1}^p L_i^0 t_i^2 \eta}; 
	\end{eqnarray*}
	for $q\geq \alpha$, we have 
	\begin{eqnarray*}
		&& \min_{k=0, ..., K-1} \sum_{i=1}^p t_i \E \left[  \|\nabla_i f(X^k) \|_{(i)\star} \right] \\ 
		&\leq&  \frac{2 \Delta^0}{\eta K} + \frac{16\sum_{i=1}^p t_i \rho \sigma}{\alpha K} +       \frac{8\sqrt{q \alpha} \sum_{i=1}^p t_i  \rho \sigma}{\sqrt{(2-\alpha)(\alpha+\beta q)}}  \\ 
		&&  +  \frac{8\sqrt{\alpha} (1-q) \sum_{i=1}^p\delta_i t_i^2 \rho \eta}{\sqrt{(2-\alpha)(\alpha+\beta q)q}}  \\ 
		&&   +   \frac{4\sum_{i=1}^p \delta_i t_i^2 \eta}{\sqrt{\alpha}}  + {\sum_{i=1}^p L_i^0 t_i^2 \eta}. 
	\end{eqnarray*}
	
\end{theorem}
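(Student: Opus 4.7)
}
The plan is to combine the two complementary error-control arguments already used for Gluon-MVR-1 and Gluon-MVR-2. First, I would establish the layer-wise descent inequality: because the LMO step in (\ref{eq:Gluon-mvr-3-update}) gives $\langle M_i^k, X_i^{k+1}-X_i^k\rangle_{(i)} = -t_i\eta \|M_i^k\|_{(i)\star}$ and $\|X_i^{k+1}-X_i^k\|_{(i)} = t_i\eta$, Assumption \ref{as:L0L1smooth} yields the one-step bound
\begin{equation*}
f(X^{k+1}) \leq f(X^k) - \eta\sum_{i=1}^p t_i \|\nabla_i f(X^k)\|_{(i)\star} + 2\eta\sum_{i=1}^p t_i \|M_i^k - \nabla_i f(X^k)\|_{(i)\star} + \tfrac{\eta^2}{2}\sum_{i=1}^p t_i^2 (L_i^0 + L_i^1\|\nabla_i f(X^k)\|_{(i)\star}),
\end{equation*}
where the $L_i^1$ term is absorbed into the LHS using $\eta \leq \min_i 1/(L_i^1 t_i)$ in the $L_i^1\neq 0$ case (contributing the factor of $2$ in front of $\Delta^0/(\eta K)$). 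Summing over $k$, taking expectation, and dividing by $K$ reduces the problem to bounding $\frac{1}{K}\sum_{k=0}^{K-1}\sum_i t_i \E[\|M_i^k - \nabla_i f(X^k)\|_{(i)\star}]$.

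Next, I would control the momentum tracking error by an error decomposition that isolates the two sources of variance reduction. Writing
\begin{equation*}
M_i^k - \nabla_i f(X^k) = \beta\bigl(M_i^{k-1} - \nabla_i f(X^{k-1})\bigr) + \beta\bigl[\nabla_i f_{\xi^k}(X^k) - \nabla_i f_{\xi^k}(X^{k-1}) - (\nabla_i f(X^k) - \nabla_i f(X^{k-1}))\bigr] + (1-\beta)\bigl(g_i^k - \nabla_i f(X^k)\bigr),
\end{equation*}
the second bracket is a martingale difference with variance bounded by $\delta_i^2 \|X_i^k - X_i^{k-1}\|_{(i)}^2 \leq \delta_i^2 t_i^2\eta^2$ by Assumption \ref{as:HV}, while the third term inherits the MVR-2 estimator error analysis. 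Unrolling the recursion and passing to the dual norm via Assumption \ref{as:rho} gives $\E[\|M_i^k-\nabla_i f(X^k)\|_{(i)\star}] \leq \rho \beta^k \sigma + \rho\sqrt{\sum_{\tau=1}^k \beta^{2(k-\tau)}\delta_i^2 t_i^2\eta^2} + (1-\beta)\sum_{\tau=0}^k \beta^{k-\tau}\E\|g_i^\tau - \nabla_i f(X^\tau)\|_2$. The STORM-style middle term is precisely what produces the $\delta_i t_i^2\eta/\sqrt{\alpha}$ contribution that distinguishes Gluon-MVR-3 from Gluon-MVR-2 (replacing the $L_i^0 t_i^2\eta/\alpha$ term of the latter).

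For the MVR estimator $g_i^k$, I would reuse the recursion already developed for Gluon-MVR-2: expanding $g_i^k - \nabla_i f(X^k) = (1-q)(g_i^{k-1} - \nabla_i f(X^{k-1})) + (1-q)[\nabla_i f(X^{k-1})-\nabla_i f(X^k) + \nabla_i f_{\xi^k}(X^k) - \nabla_i f_{\xi^k}(X^{k-1})] + q(\nabla_i f_{\xi^k}(X^k) - \nabla_i f(X^k))$, the cross-terms vanish in expectation, yielding $\E\|g_i^k - \nabla_i f(X^k)\|_2^2 \leq (1-q)^{2k}\sigma^2 + \sum_{\tau=1}^k (1-q)^{2(k-\tau)}[(1-q)^2 \delta_i^2 t_i^2\eta^2 + q^2\sigma^2]$. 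Summing the compound geometric sums over $k$ in both $\beta$ and $1-q$ produces the terms involving $\sqrt{q\alpha/((2-\alpha)(\alpha+\beta q))}$ and $1/(qK\sqrt{v+\alpha-1})$; the split into $q<\alpha$ and $q\geq \alpha$ comes from whether the iterated geometric sum $\sum_k \beta^{2(k-\tau)}(1-q)^{2(\tau-s)}$ is dominated by the $\beta$ or $(1-q)$ factor.

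The main obstacle will be the double-geometric bookkeeping: because Gluon-MVR-3 intertwines an EMA of decay $\alpha$ with an MVR estimator of decay $q$ \emph{and} a STORM correction tied to $\beta$, one must carefully interchange the order of summation $\sum_k \sum_\tau \sum_s$ and then bound products of two geometric tails using the elementary inequality $\sum_{k\geq \tau}\beta^{k-\tau}(1-q)^{k-\tau} \leq 1/(1-\beta(1-q))$, whose denominator controls the regime dichotomy. The $(L^0,L^1)$ case is then handled exactly as in Theorem \ref{th:cs-Gluon-mvr-2}: the constraint $\eta \leq \min_i 1/(L_i^1 t_i)$ lets the $L_i^1\|\nabla_i f(X^k)\|_{(i)\star}$ descent-lemma term be absorbed into half of the gradient-norm sum on the LHS, at the cost of doubling all remaining constants.
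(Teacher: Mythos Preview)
Your proposal is essentially the paper's proof: the same descent inequality (\ref{eq:sumfgrad-cs}), the same recursion $\mu_i^k = \beta\mu_i^{k-1} + \beta Z_i^k + \alpha\gamma_i^k$, the same treatment of the STORM correction $Z_i^k$ via (\ref{eq:sumZitau}), and the reuse of the Gluon-MVR-2 analysis for the $\gamma_i^k = g_i^k - \nabla_i f(X^k)$ part. One imprecision is worth flagging: in your displayed bound you handle $\sum_\tau \alpha\beta^{k-\tau}\gamma_i^\tau$ by the triangle inequality, writing $(1-\beta)\sum_\tau\beta^{k-\tau}\E\|g_i^\tau-\nabla_i f(X^\tau)\|_2$, whereas the exact constants in the theorem (the $\sqrt{q\alpha/((2-\alpha)(\alpha+\beta q))}$ and $\sqrt{2\alpha}/(q\sqrt{v+\alpha-1})$ factors) come from bounding $\E\|\sum_\tau\alpha\beta^{k-\tau}\gamma_i^\tau\|_2^2$ directly and computing the nonvanishing cross-terms $\E\langle\gamma_i^j,\gamma_i^s\rangle = (1-q)^{s-j}\E\|\gamma_i^j\|_2^2$ via Lemmas~\ref{lm:sumvs-j} and~\ref{lm:sum1-qj}; your later remark about interchanging $\sum_k\sum_\tau\sum_s$ suggests you have this in mind, so just make sure you actually use the squared-norm route rather than the triangle inequality when you want those specific constants.
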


\subsection{Choices of Parameters for Gluon-MVR-3}\label{subsec:cp-Gluon-MVR-3}

In this subsection, we discuss how to achieve fast convergence rate for Algorithm \ref{alg:gluon_cs_mvr-3}. From Theorem \ref{th:cs-Gluon-mvr-3}, for $q\geq \alpha$, we can get 
\begin{eqnarray*}
	&& \min_{k=0, ..., K-1} \sum_{i=1}^p t_i \E \left[  \|\nabla_i f(X^\tau) \|_{(i)\star} \right] \\ 
	&\leq& {\cal O} \left(   \frac{1}{\eta K} + \frac{1}{\alpha K}  + \sqrt{\alpha}  + \frac{\sqrt{\alpha} \eta}{q} + \frac{\eta}{\sqrt{\alpha}} \right). 
\end{eqnarray*}
By choosing $\eta = \alpha = \frac{1}{K^{2/3}}$, and $q \geq \frac{1}{K^{2/3}}$, we can get ${\cal O} \left(  \frac{1}{K^{1/3}}  \right)$ convergence rate. For $q<\alpha$, we can obtain the following upper-bound 
\begin{eqnarray*}
	&& \min_{k=0, ..., K-1} \sum_{i=1}^p t_i \E \left[  \|\nabla_i f(X^\tau) \|_{(i)\star} \right] \\ 
	&\leq& {\cal O} \left(   \frac{1}{\eta K} + \frac{1}{\alpha K}  + \sqrt{q}  + \frac{ \eta}{\sqrt{q}} + \frac{\eta}{\sqrt{\alpha}} + \frac{1}{qK} \right). 
\end{eqnarray*}
By choosing $\eta = q = \frac{1}{K^{2/3}}$ and $\alpha > \frac{1}{K^{2/3}}$, we can achieve ${\cal O} \left(  \frac{1}{K^{1/3}}  \right)$ convergence rate. 

For the case where $L_i^1 \neq 0$, we can also get the above results as long as we choose $\eta$ such that $\eta \leq \min_i \frac{1}{L_i^1 t_i}$ is satisfied. Compared to Gluon-MVR-2, we do not need the restriction on $\eta/\alpha$ for Gluon-MVR-3 in this case, same as Gluon-MVR-1.

\section{EXPERIMENTS}

We evaluate momentum variance reduction (MVR) within the Gluon framework instantiated as Muon and Scion on the FineWeb-10B dataset, training NanoGPT with 124M parameters.
Our implementations build on two open-source repositories:
\textbf{Modded-nanoGPT} \citep{jordan2024modded} and the \textbf{Muon optimizer} \citep{jordan2024muon}. The preformance of Gluon-MVR-3 is worse than Gluon-MVR-2, while better than Gluon-MVR-1. We list the detailed numerical results for Gluon-MVR-3 in the Appendix. 

\paragraph{Parameter Setting}
Unless otherwise noted, we use learning rate $\eta = 3.6 \times 10^{-4}$ and global batch size $B=512$.
For the momentum parameter, the main paper denotes the decay by $\beta\in[0,1)$ and uses $\alpha:=1-\beta$;
where prior code uses an ``$\alpha$'' knob, we report the equivalent $\beta=1-\alpha$ for consistency.
Models are trained for 5{,}000 steps (compute-optimal per Chinchilla scaling).
Additional hyperparameters are in the Appendix. We report the validation loss for one run.

\begin{table}[htbp]
	\footnotesize
	\setlength{\tabcolsep}{5pt}
	\centering
	\caption{Hyperparameter Search Space (Lists Abbreviated with Ellipses)}
	\label{tab:search-space}
	\begin{tabularx}{\columnwidth}{@{}lX@{}}
		\toprule
		$\eta$ & $\{3.6\times10^{-4},\, 4.0\times10^{-4}\,\}$ \\
		$\beta$ & $\{\,0.1,\, 0.2,\, \ldots,\, 0.9\,\}$ \\
		$B$ & $\{\,128,\, 256,\, 512\,\}$ \\
		$q$ (Gluon-MVR-2 only) & $\{\,0.1,\, 0.2,\, \ldots,\, 0.9\,\}$ \\
		\bottomrule
	\end{tabularx}
\end{table}

\subsection{Hyperparameter Search Setup}

We sweep the learning rate, momentum parameter, and batch size for all variants, and additionally sweep the MVR parameter $q$ for \textsc{Gluon-MVR-2}.
Table~\ref{tab:search-space} shows the grid; Table~\ref{tab:selected-compact} lists the selections used in Sections~\ref{sec:gmr1}--\ref{sec:gmr2}.

\begin{table}[htbp]
	\scriptsize
	\centering
	\caption{Selected Hyperparameters for Final Validation Losses (Reported in the Paper’s $\beta$ Notation)}
	\label{tab:selected-compact}
	\resizebox{\columnwidth}{!}{%
		\begin{tabular}{@{}lccc@{}}
			\toprule
			& \textbf{Baseline (Gluon)} & \textbf{Best Gluon-MVR-1} & \textbf{Best Gluon-MVR-2} \\
			\midrule
			$\eta$        & $3.6\times10^{-4}$ & $3.6\times10^{-4}$ & $3.6\times10^{-4}$ \\
			$\beta$       & $0.1$ (=$1{-}\alpha$ with $\alpha{=}0.9$) & $0.6$ ($B{=}128$) / $0.5$ ($B{=}512$) & $0.2$ (=$1{-}\alpha$ with $\alpha{=}0.8$) \\
			$B$           & $512$ & $128,\,512$ & $512$ \\
			$q$           & --    & --          & $0.7$ \\
			\bottomrule
	\end{tabular}}
\end{table}

\subsection{Gluon-MVR-1}
\label{sec:gmr1}

We first evaluate \textsc{Gluon-MVR-1}, trained for 5{,}000 steps on a single A100 GPU.

\paragraph{Large Batch ($B{=}512$)}
The baseline reaches a validation loss of $3.832$.
With \textsc{Gluon-MVR-1} at $\beta{=}0.5$, performance is essentially unchanged at $3.829$ (Fig.~\ref{fig:gmr1_512}).
A brief instability appears around step $\sim\!3200$, but both curves converge to the same value.

\begin{figure}[h]
	\centering
	\includegraphics[width=0.7\linewidth]{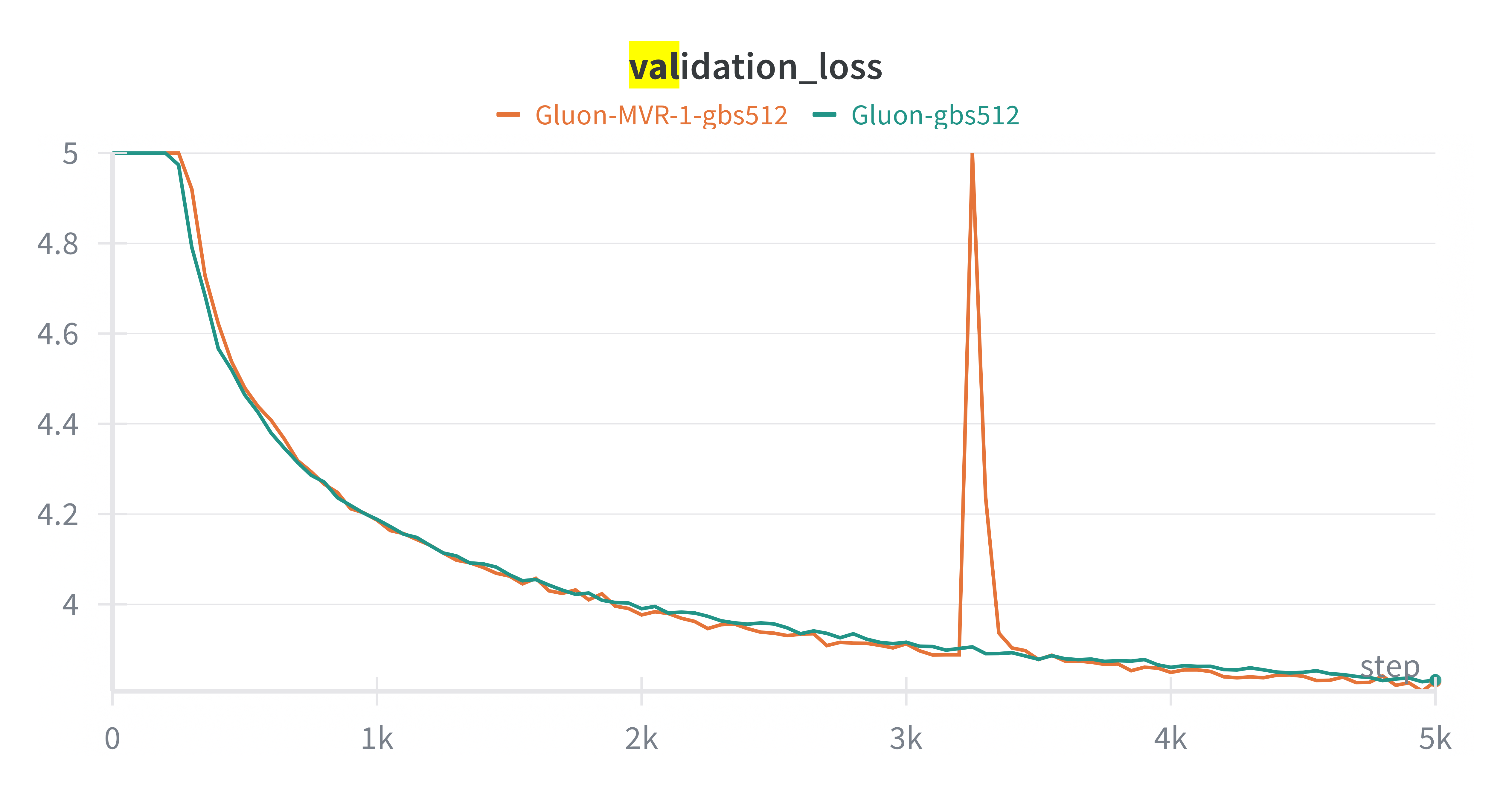}
	\caption{Validation Loss for Gluon (Baseline) VS.\ Gluon-MVR-1 at Global Batch Size $B{=}512$}
	\label{fig:gmr1_512}
\end{figure}

\paragraph{Small Batch ($B{=}128$)}
At smaller batches, \textsc{Gluon-MVR-1} is beneficial: the baseline ($\beta{=}0.1$; code $\alpha{=}0.9$) finishes at $4.256$, while \textsc{Gluon-MVR-1} with $\beta{=}0.6$ improves to $4.107$ (Fig.~\ref{fig:gmr1_128}), $\approx 0.15$ reduction.

\begin{figure}[h]
	\centering
	\includegraphics[width=0.7\linewidth]{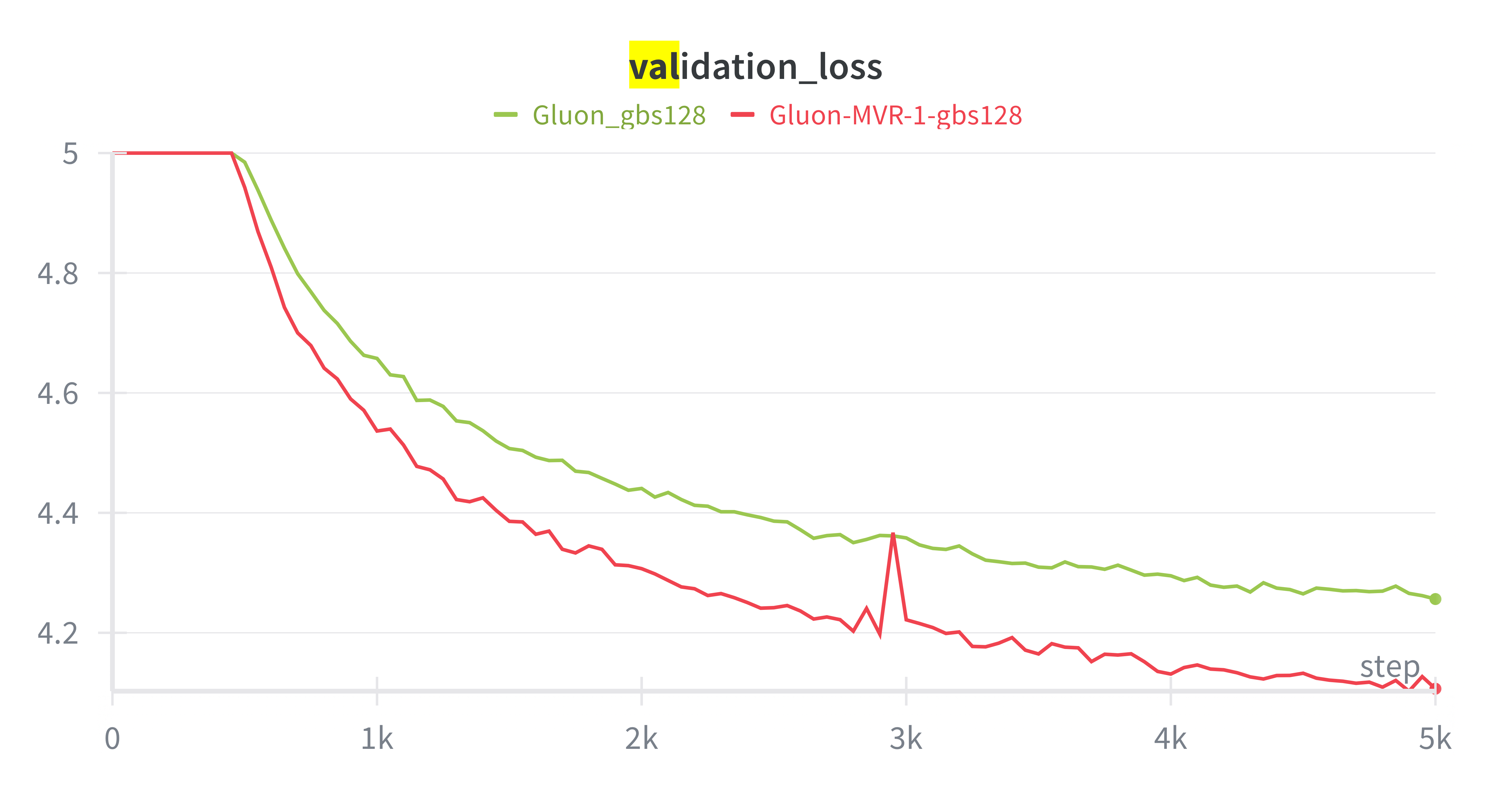}
	\caption{Validation Loss for Gluon (Baseline) VS.\ Gluon-MVR-1 at Global Batch Size $B{=}128$}
	\label{fig:gmr1_128}
\end{figure}

\subsection{Gluon-MVR-2}
\label{sec:gmr2}

We next evaluate \textsc{Gluon-MVR-2}, where MVR is controlled by $q\in(0,1]$.
Training uses 5{,}001 steps on a single A100 GPU.

\paragraph{Results.}
With $B{=}512$, the baseline ($\beta{=}0.1$; code $\alpha{=}0.9$) achieves $3.832$.
Tuning to $\beta{=}0.2$ (code $\alpha{=}0.8$) and $q{=}0.7$ yields $3.682$ (Fig.~\ref{fig:gmr2_512}), an improvement of $\approx 0.15$.
Unlike \textsc{Gluon-MVR-1}, \textsc{Gluon-MVR-2} consistently improves performance in the large-batch regime.

\begin{figure}[h]
	\centering
	\includegraphics[width=0.7\linewidth]{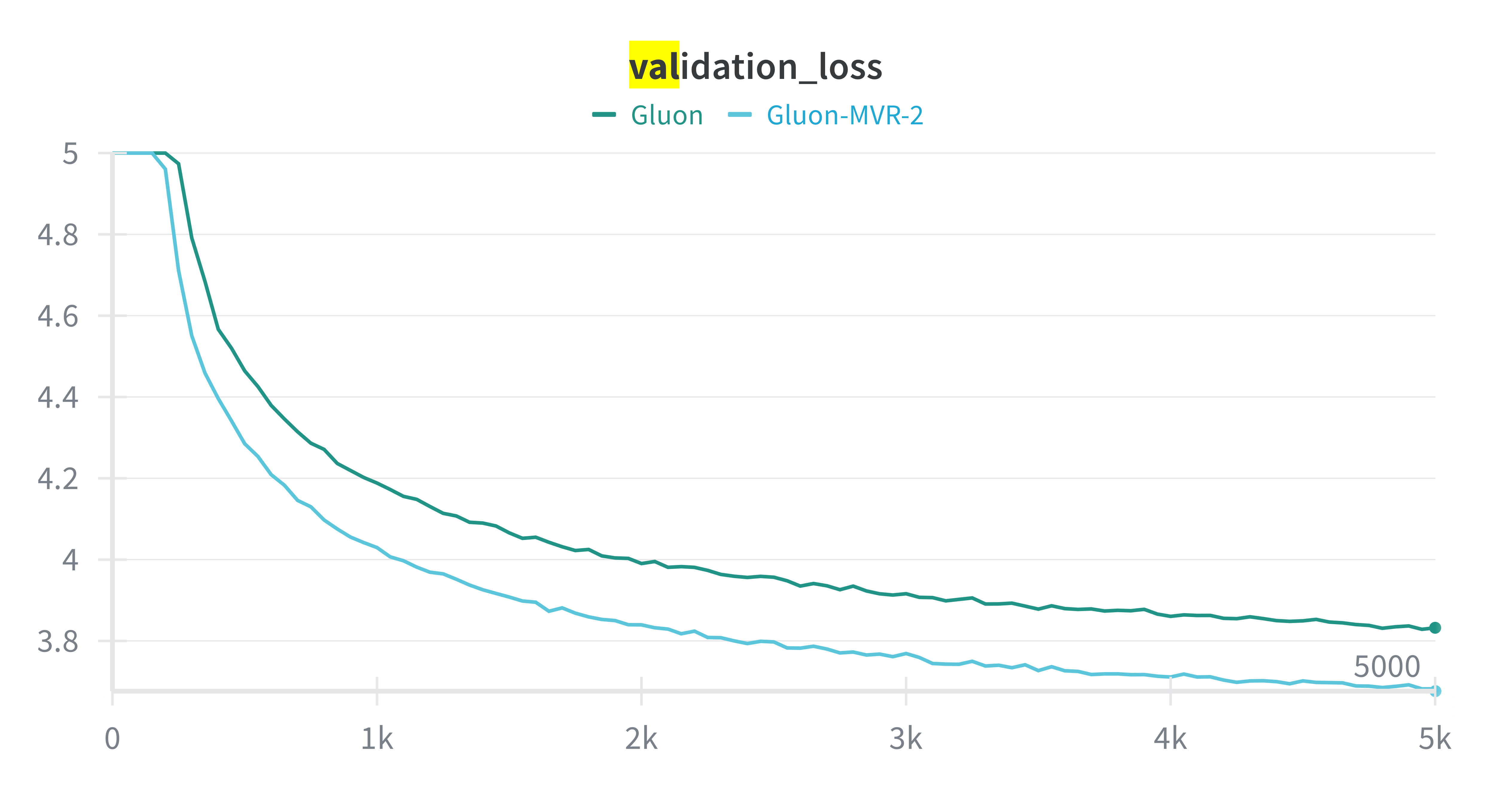}
	\caption{Validation loss for Gluon (baseline) VS.\ Gluon-MVR-2 at global batch size $B{=}512$.}
	\label{fig:gmr2_512}
\end{figure}

\subsection{Conclusion of Experimental Results}

Across our experimental setups, incorporating momentum variance reduction into Gluon (instantiated as Muon/Scion updates) yields meaningful gains.
\textsc{Gluon-MVR-1} is comparable with Gluon at batch size $B{=}512$, while noticeably better than Gluon at batch size $B{=}128$.
\textsc{Gluon-MVR-2}, via its $q$-controlled estimator, improves consistently at batch size $B{=}512$, lowering validation loss by $\sim 0.15$ over the baseline.
These empirical trends align with the theory: MVR reduces variance in the momentum estimate, improving stability and convergence under high-variance training.

\clearpage

\bibliographystyle{plainnat}
\bibliography{gluon_mvr_ref}

\clearpage
\appendix
\thispagestyle{empty}

\onecolumn
\aistatstitle{Supplementary Materials}

\tableofcontents

\newpage

\section{Gluon-MVR-3: Empirical Evaluation}

We now empirically evaluate the third instantiation of Gluon with Momentum Variance Reduction (\textsc{Gluon-MVR-3}), corresponding to Algorithm~5 in Section~6. 
This variant augments the standard momentum update by an additional correction term 
$\beta(\nabla_i f_{\xi_k}(X_k) - \nabla_i f_{\xi_k}(X_{k-1}))$, 
which aims to further reduce the stochastic variance of the momentum estimate.

\paragraph{Experimental Setup.}
All experiments are conducted on a single NVIDIA A100 GPU. 
We train a NanoGPT model with $n_{\mathrm{layer}}=12$, $n_{\mathrm{head}}=6$, and $n_{\mathrm{embd}}=768$, corresponding to approximately $124$M parameters. 
The dataset follows the \textsc{FineWeb-10B} configuration. 
We use a global batch size of $B=512$, device batch size $b=32$, and sequence length $L=1024$. 
Each configuration is trained for $K=5{,}000$ iterations.

We perform a narrow grid search over the hyperparameters:
\[
\eta \in \{3.6\times10^{-4}\}, \quad 
\alpha \in \{0.70, 0.8, 0.9, 0.99\}, \quad 
q \in \{0.3, 0.5, 0.7, 0.9\},
\]
resulting in $16$ independent runs.
All experiments are monitored via \textsc{Weights \& Biases}, and the best-performing configuration is selected based on validation loss at convergence.

\paragraph{Results.}
Figure~\ref{fig:gluon_mvr3_validation} reports the validation loss trajectory for the configuration
$\eta = 3.6\times10^{-4}$, $\alpha = 0.8$, and $q = 0.5$.
The baseline \textsc{Gluon} optimizer (\(\eta = 3.6\times 10^{-4},\,\alpha=0.9\)) achieves a final validation loss of $3.832$ after $K=5{,}000$ iterations.
In contrast, \textsc{Gluon-MVR-3} attains a lower final loss of $3.742$, corresponding to an improvement of approximately $0.09$ in the large-batch regime.
A transient spike around iteration $3{,}100$ is observed, which we attribute to a temporary amplification of the MVR accumulator due to correlated gradient samples; the model subsequently recovers rapidly and stabilizes.

\begin{figure}[h]
	\centering
	\includegraphics[width=0.72\linewidth]{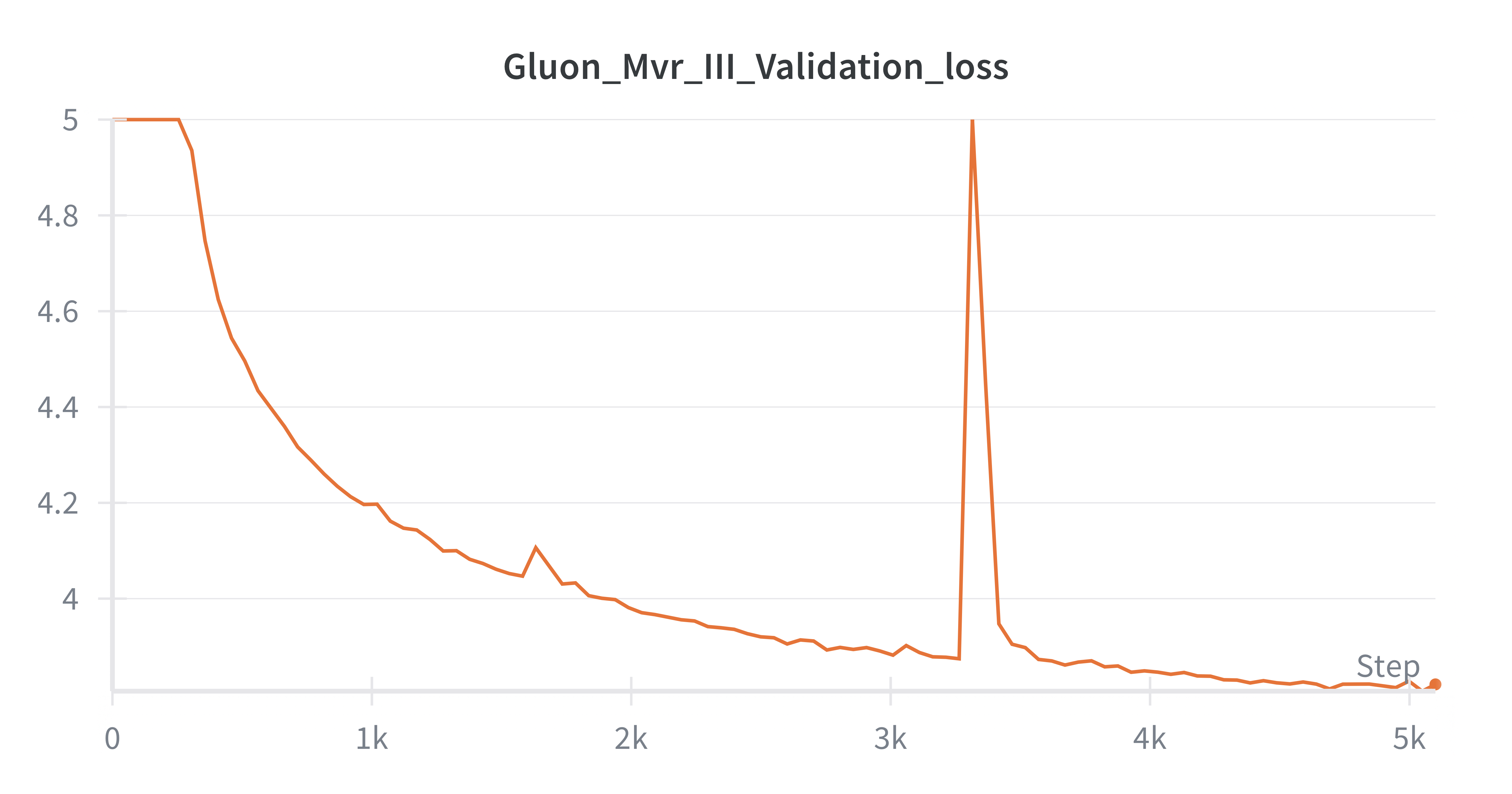}
	\caption{Validation Loss Trajectory of \textsc{Gluon-MVR-3} with $\eta=3.6\times10^{-4}$, $\alpha=0.8$, $q=0.5$, and $B=512$. 
		A Transient Instability Near Step $3{,}100$ is Followed by Stable Convergence to a Lower Steady-state Loss Than the Baseline \textsc{Gluon}.}
	\label{fig:gluon_mvr3_validation}
\end{figure}

\newpage 

\section{GLUON WITH CONSTANT STEP SIZE}

\begin{theorem}\label{th:cs-Gluon}
	Consider the constant step size regime: $\beta^k \equiv \beta$, $t_i^k \equiv t_i \eta$ in Algorithm 1 in \cite{riabinin2025gluon}. Let Assumptions \ref{as:L0L1smooth}, \ref{as:boundedvariance}, and Assumption \ref{as:rho} hold, and fix $\epsilon>0$. Let $X^0, ..., X^{K-1}$ be the iterates of Algorithm 1 in \cite{riabinin2025gluon}, and $M_i^0 = \nabla_i f_{\xi^0} (X^0)$. 
	
	1. If $L_i^1 =0$, then
	\begin{eqnarray*}
		&& \min_{k=0, ..., K-1} \sum_{i=1}^p t_i \E \left[  \|\nabla_i f(X^k) \|_{(i)\star} \right] \\ 
		&\leq&  \frac{\Delta^0}{\eta K} + \frac{2\sum_{i=1}^p t_i \rho \sigma}{\alpha K} + 2\sqrt{\alpha} \sum_{i=1}^p t_i \rho \sigma + \frac{2\sum_{i=1}^p L_i^0t_i^2 \eta}{\alpha} + \frac{\sum_{i=1}^p L_i^0 t_i^2 \eta}{2}. 
	\end{eqnarray*}
	By choosing $\eta = K^{-3/4}$ and $\alpha = K^{-1/2}$, we have $$\min_{k=0, ..., K-1} \sum_{i=1}^p t_i \E \left[  \|\nabla_i f(X^k) \|_{(i)\star} \right]  \leq {\cal O} \left(  \frac{1}{K^{1/4}}  \right).$$ 
	To reach the precision 	$ \min_{k=0, ..., K-1} \sum_{i=1}^p t_i \E \left[  \|\nabla_i f(X^k) \|_{(i)\star} \right]  \leq \epsilon$, it is sufficient to choose the parameters as follows: 
	
	\begin{align}
		& \eta = {\cal O} \left(  \min \left\{  \frac{\epsilon}{\sum_{i=1}^p L_i^0 t_i^2},  \frac{\epsilon^3}{(\sum_{i=1}^p L_i^0t_i^2) (\sum_{i=1}^pt_i)^2 \rho^2 \sigma^2}  \right\}  \right), \\ 
		&  \alpha = {\cal O} \left(  \min \left\{  1, \frac{\epsilon^2}{(\sum_{i=1}^p t_i)^2 \rho^2 \sigma^2}  \right\}  \right), \\
		& K = {\cal O} \left(  \max \left\{  \frac{\sum_{i=1}^p t_i \rho \sigma}{\epsilon}, \frac{(\sum_{i=1}^p t_i)^3 \rho^3 \sigma^3}{\epsilon^3},   \frac{\sum_{i=1}^p L_i^0 t_i^2 \Delta^0}{\epsilon^2}, \frac{(\sum_{i=1}^p L_i^0t_i^2) (\sum_{i=1}^p t_i)^2 \Delta^0 \rho^2 \sigma^2}{\epsilon^4}  \right\}  \right), 
	\end{align}
	where $\Delta^0 \eqdef f(X^0) - \inf_{X \in {\cal S}} f(X)$. 
	
	2. If $L_i^1 \neq 0$, we let $\frac{\eta}{\alpha} \leq \min_{i}  \frac{1}{5L_i^1 t_i}$. Then $\left(  \frac{2}{\alpha} + \frac{1}{2}  \right) L_i^1 t_i \eta \leq \frac{1}{2}$ for all $i$, and 
	\begin{eqnarray*}
		&& \min_{k=0, ..., K-1} \sum_{i=1}^p t_i \E \left[  \|\nabla_i f(X^k) \|_{(i)\star} \right] \\ 
		&\leq&  \frac{2\Delta^0}{\eta K} + \frac{4\sum_{i=1}^p t_i \rho \sigma}{\alpha K} + 4\sqrt{\alpha} \sum_{i=1}^p t_i \rho \sigma + \frac{4\sum_{i=1}^p L_i^0t_i^2 \eta}{\alpha} + {\sum_{i=1}^p L_i^0 t_i^2 \eta}. 
	\end{eqnarray*}
	By choosing $\eta = \min \left(  \min_{i}  \frac{1}{5L_i^1 t_i} K^{-1/2},  K^{-3/4}   \right)$, and $\alpha = K^{-1/2}$, we have $$\min_{k=0, ..., K-1} \sum_{i=1}^p t_i \E \left[  \|\nabla_i f(X^k) \|_{(i)\star} \right]  \leq {\cal O} \left(  \frac{1}{K^{1/4}} + \frac{\max_{i} L_i^1 t_i}{K^{1/2}}  \right).$$
	To reach the precision 	$ \min_{k=0, ..., K-1} \sum_{i=1}^p t_i \E \left[  \|\nabla_i f(X^k) \|_{(i)\star} \right]  \leq \epsilon$, it is sufficient to choose the parameters as follows: 
	
	\begin{align}
		& \eta = {\cal O} \left(  \min \left\{  \frac{\epsilon}{\sum_i L_i^0 t_i^2},  \frac{\epsilon^3}{(\sum_i L_i^0t_i^2) (\sum_i t_i)^2 \rho^2 \sigma^2}, \frac{1}{5\max_iL_i^1 t_i},  \frac{\epsilon^2}{5\max_iL_i^1 t_i (\sum_i t_i)^2 \rho^2 \sigma^2}  \right\}  \right), \\ 
		&  \alpha = {\cal O} \left(  \min \left\{  1, \frac{\epsilon^2}{(\sum_{i=1}^p t_i)^2 \rho^2 \sigma^2}  \right\}  \right), \\
		& K = {\cal O} \left(  \max \left\{  \frac{\sum_{i=1}^p t_i \rho \sigma}{\epsilon}, \frac{(\sum_{i=1}^p t_i)^3 \rho^3 \sigma^3}{\epsilon^3},   \frac{\sum_{i=1}^p L_i^0 t_i^2 \Delta^0}{\epsilon^2}, \frac{\max_i L_i^1 t_i \Delta^0}{\epsilon}, \right. \right.  \nonumber \\ 
		& \quad \quad \quad \quad  \left. 	\left.  \frac{(\sum_{i=1}^p t_i)^2 \Delta^0 \rho^2 \sigma^2}{\max_i L_i^1 t_i \epsilon^3},  \frac{(\sum_{i=1}^p L_i^0t_i^2) (\sum_{i=1}^p t_i)^2 \Delta^0 \rho^2 \sigma^2}{\epsilon^4}  \right\}  \right)
	\end{align}
	
\end{theorem}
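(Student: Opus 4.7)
}

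The plan is to combine a one-step descent inequality with a recursive bound on the momentum tracking error $e_i^k := M_i^k - \nabla_i f(X^k)$. First, I would apply Assumption \ref{as:L0L1smooth} layer-wise: since the LMO update in the ball of radius $t_i\eta$ satisfies $\langle M_i^k, X_i^{k+1}-X_i^k\rangle_{(i)} = -t_i\eta\|M_i^k\|_{(i)\star}$ and $\|X_i^{k+1}-X_i^k\|_{(i)} \le t_i\eta$, the $(L^0,L^1)$-smoothness yields
\begin{equation*}
f(X^{k+1}) - f(X^k) \le -\sum_i t_i\eta\,\|M_i^k\|_{(i)\star} + \sum_i t_i\eta\,\|e_i^k\|_{(i)\star} + \sum_i \tfrac{1}{2}\bigl(L_i^0 + L_i^1\|\nabla_i f(X^k)\|_{(i)\star}\bigr) t_i^2\eta^2.
\end{equation*}
Using $\|M_i^k\|_{(i)\star} \ge \|\nabla_i f(X^k)\|_{(i)\star} - \|e_i^k\|_{(i)\star}$ and telescoping yields an inequality whose left-hand side involves $\sum_k\sum_i t_i\eta\,\E\|\nabla_i f(X^k)\|_{(i)\star}$ and whose right-hand side is $\Delta^0$ plus $\sum_k\sum_i (2 t_i\eta\,\E\|e_i^k\|_{(i)\star} + \tfrac{1}{2}L_i^0 t_i^2 \eta^2)$ (and, for $L_i^1\ne 0$, an additional $\tfrac12\sum_k\sum_i L_i^1 t_i^2\eta^2\,\E\|\nabla_i f(X^k)\|_{(i)\star}$ term).

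The second step is to control $\E\|e_i^k\|_{(i)\star}$. Writing $M_i^k = (1-\alpha) M_i^{k-1} + \alpha\nabla_i f_{\xi^k}(X^k)$ and splitting
\begin{equation*}
e_i^k = (1-\alpha)\,e_i^{k-1} + (1-\alpha)\bigl(\nabla_i f(X^{k-1}) - \nabla_i f(X^k)\bigr) + \alpha\bigl(\nabla_i f_{\xi^k}(X^k) - \nabla_i f(X^k)\bigr),
\end{equation*}
I square in $\|\cdot\|_{(i)\star}$, take conditional expectation (the last term is a mean-zero martingale increment), use Assumption \ref{as:rho} to absorb the Euclidean variance bound into the dual norm at cost $\rho^2$, apply Young's inequality with parameter tuned so the factor in front of $\E\|e_i^{k-1}\|_{(i)\star}^2$ is $\le 1-\alpha/2$, and bound $\|\nabla_i f(X^{k-1}) - \nabla_i f(X^k)\|_{(i)\star}$ via Assumption \ref{as:L0L1smooth} together with $\|X_i^k-X_i^{k-1}\|_{(i)}\le t_i\eta$. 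Unrolling the geometric recursion (with initial seed $\E\|e_i^0\|_{(i)\star}^2\le \rho^2\sigma^2$) and taking square roots via Jensen gives, for the case $L_i^1=0$,
\begin{equation*}
\E\|e_i^k\|_{(i)\star} \lesssim (1-\alpha/2)^{k/2}\rho\sigma + \sqrt{\alpha}\,\rho\sigma + \frac{L_i^0 t_i \eta}{\alpha}.
\end{equation*}
Summing $\sum_{k=0}^{K-1} 2 t_i\eta\,\E\|e_i^k\|_{(i)\star}$ produces the three error terms $\tfrac{2 t_i\rho\sigma}{\alpha K}$, $2\sqrt{\alpha}\,t_i\rho\sigma$, and $\tfrac{2L_i^0 t_i^2\eta}{\alpha}$ in the stated bound after dividing by $\eta K$.

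The main obstacle is the $L_i^1\ne 0$ case, where both the descent inequality and the Lipschitz bound used inside the tracking recursion produce terms proportional to $L_i^1\|\nabla_i f(X^k)\|_{(i)\star}$. Under the hypothesis $\eta/\alpha\le 1/(5\max_i L_i^1 t_i)$, I would verify that the combined coefficient multiplying $\E\|\nabla_i f(X^k)\|_{(i)\star}$ on the right-hand side is at most $\bigl(\tfrac{2}{\alpha}+\tfrac{1}{2}\bigr)L_i^1 t_i\eta\le \tfrac{1}{2}$, allowing absorption into the left-hand side at the cost of the factor~$2$ (hence $2\Delta^0$, and all the prefactors $2,4$ on the error terms). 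The iteration-complexity corollaries are then a direct algebraic exercise: for each of the five (respectively six, when $L_i^1\ne 0$) terms in the bound, I would set it $\lesssim\epsilon$, solve the resulting inequalities for $\eta$ and $\alpha$, and take the maximum $K$ across all constraints. The extra constraint in the $L_i^1\ne 0$ complexity, $\eta\le 1/(5\max_i L_i^1 t_i\cdot\alpha^{-1})$, is what produces the term $\tfrac{(\sum_i t_i)^2\Delta^0\rho^2\sigma^2}{\max_i L_i^1 t_i\,\epsilon^3}$ after substituting the optimal $\alpha$.
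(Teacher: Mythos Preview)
Your descent inequality and the overall architecture (telescope, then control $\sum_k \E\|e_i^k\|_{(i)\star}$, then absorb the $L_i^1$ term) match the paper. The gap is in the tracking-error step.

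You propose to square in $\|\cdot\|_{(i)\star}$ and then use that $\alpha\gamma_i^k$ is a mean-zero martingale increment to kill the cross term. But martingale orthogonality is a Hilbert-space fact: it requires the identity $\|a+b\|^2=\|a\|^2+2\langle a,b\rangle+\|b\|^2$, which is unavailable for a general dual norm $\|\cdot\|_{(i)\star}$. If instead you run the squared recursion in the Euclidean norm (where orthogonality does hold), you can no longer bound the drift $\|\nabla_i f(X^{k-1})-\nabla_i f(X^k)\|_2$: Assumption~\ref{as:L0L1smooth} controls this only in $\|\cdot\|_{(i)\star}$, and Assumption~\ref{as:rho} gives $\|\cdot\|_{(i)\star}\le\rho\|\cdot\|_2$, which is the wrong direction. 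And if you stay in the dual norm but drop the martingale structure (pure triangle/Young), the variance contribution becomes $O(\rho\sigma)$ rather than $O(\sqrt{\alpha}\,\rho\sigma)$, a constant term that does not vanish---so the claimed $K^{-1/4}$ rate would fail.

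The paper avoids this by \emph{not} setting up a recursion on $\E\|e_i^k\|_{(i)\star}^2$ at all. It unrolls the vector recursion
\[
e_i^k=\beta^k e_i^0+\sum_{\tau=1}^k\beta^{k-\tau}\alpha\gamma_i^\tau+\sum_{\tau=1}^k\beta^{k+1-\tau}S_i^\tau,
\qquad S_i^\tau:=\nabla_i f(X^{\tau-1})-\nabla_i f(X^\tau),
\]
and then applies the triangle inequality to the \emph{norm} (not the square). The martingale sum is handled by $\|\cdot\|_{(i)\star}\le\rho\|\cdot\|_2$, Jensen, and orthogonality in $\|\cdot\|_2$, yielding the crucial $\sqrt{\alpha}\,\rho\sigma$. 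The drift sum stays in $\|\cdot\|_{(i)\star}$, where Assumption~\ref{as:L0L1smooth} applies directly and gives $\frac{L_i^0 t_i\eta}{\alpha}+L_i^1 t_i\eta\sum_\tau\beta^{k+1-\tau}\E\|\nabla_i f(X^\tau)\|_{(i)\star}$. Summing the latter over $k$ and swapping the order of summation produces exactly the $\tfrac{2}{\alpha}L_i^1 t_i^2\eta^2$ coefficient that combines with the $\tfrac12 L_i^1 t_i^2\eta^2$ from the descent step, so the stated condition $(\tfrac{2}{\alpha}+\tfrac12)L_i^1 t_i\eta\le\tfrac12$ suffices for absorption. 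Note also that your squared recursion would generate $(L_i^1)^2\|\nabla_i f(X^k)\|_{(i)\star}^2$ terms inside the drift, which are harder to absorb linearly; the paper's first-order bound sidesteps this entirely.
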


\begin{proof}
	Firstly, similar to the proof of (28) in \cite{riabinin2025gluon}, we can obtain 
	\begin{align}
		\sum_{i=1}^p \sum_{k=0}^{K-1} t_i \eta \E \left[  \| \nabla_i f(X^k)\|_{(i)\star}  \right] \leq \Delta^0 + &\sum_{i=1}^p \left[  2 \sum_{k=0}^{K-1} t_i \eta \E \left[  \|M_i^k - \nabla_i f(X^k)\|_{(i)\star}   \right] \right. \label{eq:sumfgrad-cs} \\ 
		& \quad \quad \left.  +  \sum_{k=0}^{K-1} \frac{L_i^0}{2} t_i^2 \eta^2 + \sum_{k=0}^{K-1} \frac{L_i^1 t_i^2 \eta^2}{2} \E \left[  \| \nabla_i f(X^k)\|_{(i)\star}  \right] \right]. \nonumber 
	\end{align}
	
	We introduce the following notation: $\mu_i^k \eqdef M_i^k - \nabla_i f(X^k)$, $\gamma_i^k \eqdef \nabla_i f_{\xi^k} (X^k) - \nabla_i f(X^k)$, $\alpha = 1- \beta$, and $S_i^k \eqdef \nabla_i f(X^{k-1}) - \nabla_i f(X^k)$. Then we have 
	\begin{eqnarray*}
		\mu_i^k &=& M_i^k - \nabla_i f(X^k) \\ 
		&=& \alpha \gamma_i^k + \beta S_i^k + \beta \mu_i^{k-1} \\ 
		&=& \beta^k \mu_i^0 + \sum_{\tau =1}^k \beta^{k-\tau} \alpha \gamma_i^\tau + \sum_{\tau=1}^k \beta^{k+1-\tau} S_i^\tau. 
	\end{eqnarray*}
	
	Hence we can obtain 
	\begin{eqnarray*}
		&& \E \left[  \| M_i^k - \nabla_i f(X^k)\|_{(i)\star}  \right] \\ 
		&=& \E \left[  \|\mu_i^k\|_{(i)\star}  \right] \\ 
		&\overset{(a)}{\leq}& \beta^k \E \left[  \|\mu_i^0\|_{(i)\star}  \right] + \E \left[  \| \sum_{\tau =1}^k \beta^{k-\tau} \alpha \gamma_i^\tau\|_{(i)\star} \right] + \sum_{\tau=1}^k \beta^{k+1-\tau} \E \left[  \|S_i^\tau \|_{(i)\star} \right] \\ 
		&\overset{(b)}{\leq}& \beta^k \rho \E \left[  \|\mu_i^0\|_2  \right] + \rho \E \left[  \|\sum_{\tau=1}^k \beta^{k-\tau} \alpha \gamma_i^\tau \|_2 \right] + \sum_{\tau=1}^k \beta^{k+1-\tau} \left(  L_i^0 + L_i^1 \E \left[  \nabla_i f(X^\tau)\|_{(i)\star}  \right]  \right) t_i \eta \\ 
		&\overset{(c)}{\leq}& \beta^k \rho \sqrt{\E \left[  \|\mu_i^0\|_2^2  \right]} + \rho \sqrt{\E \left[  \sum_{\tau=1}^k\| \beta^{k-\tau} \alpha \gamma_i^\tau \|_2^2 \right]} + \frac{L_i^0t_i\eta}{\alpha} + L_i^1 t_i \eta \sum_{\tau=1}^k \beta^{k+1-\tau} \E \left[  \|\nabla_i f(X^\tau)\|_{(i)\star}  \right] \\ 
		&\overset{(d)}{\leq}& \beta^k \rho \sigma + \alpha \rho \sigma \sqrt{\sum_{\tau=1}^k \beta^{2k-2\tau}} +  \frac{L_i^0t_i\eta}{\alpha} + L_i^1 t_i \eta \sum_{\tau=1}^k \beta^{k+1-\tau} \E \left[  \|\nabla_i f(X^\tau)\|_{(i)\star}  \right] \\ 
		&{\leq}& (1-\alpha)^k \rho\sigma + \sqrt{\alpha}\rho\sigma + \frac{L_i^0t_i\eta}{\alpha} +  L_i^1 t_i \eta \sum_{\tau=1}^k \beta^{k+1-\tau} \E \left[  \|\nabla_i f(X^\tau)\|_{(i)\star}  \right], 
	\end{eqnarray*}
	where (a) uses the triangle inequality, (b) uses Assumptions \ref{as:L0L1smooth} and \ref{as:rho}, (c) uses Jensen’s inequality and the fact that samples $\xi^k \sim {\cal D}$ are i.i.d, (d) uses Assumption \ref{as:boundedvariance}. 
	
	Combining the above inequality with (\ref{eq:sumfgrad-cs}) gives 
	\begin{align*}
		\sum_{i=1}^p \sum_{k=0}^{K-1} t_i \eta \E \left[ \| \nabla_i f(X^k)\|_{(i)\star}  \right] \leq \Delta^0 + \sum_{i=1}^p  & \left[   \sum_{k=0}^{K-1} 2(1-\alpha)^kt_i\eta\rho \sigma + \sum_{k=0}^{K-1} 2\sqrt{\alpha} t_i \eta \rho \sigma + \sum_{k=0}^{K-1}  \frac{2L_i^0 t_i^2 \eta^2}{\alpha}   \right. \\ 
		& \quad \left. + \sum_{k=0}^{K-1} 2L_i^1 t_i^2 \eta^2 \sum_{\tau=1}^k \beta^{k+1-\tau} \E \left[  \|\nabla_i f(X^\tau) \|_{(i)\star}  \right] \right.  \\ 
		& \quad  \left. + \sum_{k=0}^{K-1} \frac{L_i^0 t_i^2 \eta^2}{2} + \sum_{k=0}^{K-1}  \frac{L_i^1 t_i^2 \eta^2}{2} \E \left[  \nabla_i f(X^k)\|_{(i)\star}  \right] \right]. 
	\end{align*}
	Since 
	$$
	\sum_{k=0}^{K-1} \sum_{\tau=1}^k \beta^{k+1-\tau} \E \left[  \|\nabla_i f(X^\tau) \|_{(i)\star}  \right]  = \sum_{\tau=1}^{K-1} \sum_{k=\tau}^{K-1} \beta^{k+1-\tau}  \E \left[  \|\nabla_i f(X^\tau) \|_{(i)\star}  \right]  \leq \frac{1}{\alpha} \sum_{k=0}^{K-1}  \E \left[  \|\nabla_i f(X^\tau) \|_{(i)\star}  \right], 
	$$
	which implies that 
	\begin{align*}
		\sum_{i=1}^p \sum_{k=0}^{K-1} t_i \eta \E \left[ \| \nabla_i f(X^k)\|_{(i)\star}  \right] \leq \Delta^0 + \sum_{i=1}^p & \left[   \frac{2t_i \eta \rho \sigma}{\alpha} + 2K\sqrt{\alpha} t_i \eta \rho \sigma + \frac{2KL_i^0 t_i^2 \eta^2}{\alpha}  + \frac{K L_i^0 t_i^2 \eta^2}{2}   \right. \\ 
		& \quad \left.  +  \sum_{k=0}^{K-1} \left(  \frac{2}{\alpha} + \frac{1}{2}  \right) L_i^1 t_i^2 \eta^2 \E \left[  \|\nabla_i f(X^k) \|_{(i)\star} \right]   \right]. 
	\end{align*}
	
	Now we consider two options: (1) $L_i^1 = 0$ for all $i \in \{  1, ..., p  \}$ and (2) $L_i^1 \neq 0$, for all $i \in \{  1, ..., p  \}$.  
	
	{\bf Case 1:} $L_i^1 = 0$ for all $i \in \{  1, ..., p  \}$. In this case, 
	\begin{eqnarray*}
		&& \min_{k=0, ..., K-1} \sum_{i=1}^p t_i \E \left[  \|\nabla_i f(X^k) \|_{(i)\star} \right] \\ 
		&\leq&  \frac{1}{K} \sum_{k=0}^{K-1} \sum_{i=1}^p t_i \E \left[  \|\nabla_i f(X^k) \|_{(i)\star} \right] \\ 
		&\leq&  \frac{\Delta^0}{\eta K} + \frac{2\sum_{i=1}^p t_i \rho \sigma}{\alpha K} + 2\sqrt{\alpha} \sum_{i=1}^p t_i \rho \sigma + \frac{2\sum_{i=1}^p L_i^0t_i^2 \eta}{\alpha} + \frac{\sum_{i=1}^p L_i^0 t_i^2 \eta}{2}. 
	\end{eqnarray*}

	{\bf Case 2:} $L_i^1 \neq 0$, for all $i \in \{  1, ..., p  \}$. First we let $\frac{\eta}{\alpha} \leq \min_{i}  \frac{1}{5L_i^1 t_i}$. Then $\left(  \frac{2}{\alpha} + \frac{1}{2}  \right) L_i^1 t_i \eta \leq \frac{1}{2}$ for all $i$, and 
	\begin{eqnarray*}
		&& \min_{k=0, ..., K-1} \sum_{i=1}^p t_i \E \left[  \|\nabla_i f(X^k) \|_{(i)\star} \right] \\ 
		&\leq&  \frac{1}{K} \sum_{k=0}^{K-1} \sum_{i=1}^p t_i \E \left[  \|\nabla_i f(X^k) \|_{(i)\star} \right] \\ 
		&\leq&  \frac{2\Delta^0}{\eta K} + \frac{4\sum_{i=1}^p t_i \rho \sigma}{\alpha K} + 4\sqrt{\alpha} \sum_{i=1}^p t_i \rho \sigma + \frac{4\sum_{i=1}^p L_i^0t_i^2 \eta}{\alpha} + {\sum_{i=1}^p L_i^0 t_i^2 \eta}. 
	\end{eqnarray*}
	
\end{proof}

\newpage 

\section{PROOFS FOR MUON-MVR}

\subsection{Two Lemmas}

\begin{lemma}\label{lem:mvr}
	Let the parameters $\eta$ and $\beta$ satisfy the following inequality: $\eta \geq \beta \max\{  \|X^0\|, \|X^*\|  \}$. Then the following inequality holds:
	\begin{equation}
		\E[\| M^k - \nabla f(X^k)\|_\star ] 
		\leq
		(1-\alpha)^k\rho\sigma
		+2\sqrt{\alpha}\rho\sigma
		+\frac{4\rho\eta L}{\sqrt{\alpha}}.
	\end{equation}
\end{lemma}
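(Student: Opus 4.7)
The plan is to set up a recursion for the momentum error $\mu^k \eqdef M^k - \nabla f(X^k)$, unroll it into a weighted sum of noise increments, and bound each resulting piece by a combination of Jensen's inequality, conditional independence, and the Lipschitz/variance hypotheses.

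First, I would rewrite the Muon-MVR update (\ref{eq:m}) by adding and subtracting $(1-\alpha)\nabla f(X^k)$ and $\nabla f(X^{k+1})$ to obtain the recursion
\begin{equation*}
\mu^{k+1} = (1-\alpha)\mu^k + \alpha\, \tilde{\gamma}^{k+1} + (1-\alpha)\, H^{k+1},
\end{equation*}
where $\tilde{\gamma}^{k+1} \eqdef \nabla f_{\xi^{k+1}}(X^{k+1}) - \nabla f(X^{k+1})$ is the stochastic-gradient noise and $H^{k+1} \eqdef [\nabla f_{\xi^{k+1}}(X^{k+1}) - \nabla f_{\xi^{k+1}}(X^k)] - [\nabla f(X^{k+1}) - \nabla f(X^k)]$ is the centered finite-difference noise. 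Both are conditionally mean-zero given the iterates and previous samples, and are independent across $\tau$ as $\tau$ varies. Unrolling from $\mu^0 = \nabla f_{\xi^0}(X^0) - \nabla f(X^0)$ gives
\begin{equation*}
\mu^k = (1-\alpha)^k \mu^0 + \alpha \sum_{\tau=1}^k (1-\alpha)^{k-\tau} \tilde{\gamma}^\tau + (1-\alpha) \sum_{\tau=1}^k (1-\alpha)^{k-\tau} H^\tau.
\end{equation*}

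Next, I would apply the triangle inequality to $\E[\|\mu^k\|_\star]$ and use Assumption \ref{eq:rho} to pass to the Euclidean norm, then estimate the three terms separately. For the initial term, Assumption \ref{eq:sigma} and Jensen give $\E[\|\mu^0\|_2]\leq\sigma$, contributing $(1-\alpha)^k\rho\sigma$. For the stochastic-gradient sum, conditional orthogonality across $\tau$ gives
\begin{equation*}
\E\Bigl[\bigl\|\alpha \textstyle\sum_{\tau=1}^k (1-\alpha)^{k-\tau} \tilde{\gamma}^\tau\bigr\|_2^2\Bigr] = \alpha^2 \sum_{\tau=1}^k (1-\alpha)^{2(k-\tau)} \E[\|\tilde{\gamma}^\tau\|_2^2] \leq \frac{\alpha\sigma^2}{2-\alpha},
\end{equation*}
so after Jensen and multiplying by $\rho$ its contribution is bounded by $\sqrt{\alpha}\rho\sigma\leq 2\sqrt{\alpha}\rho\sigma$.

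For the finite-difference sum, the same independence argument gives $\E\bigl[\bigl\|\sum (1-\alpha)^{k-\tau} H^\tau\bigr\|_2^2\bigr] \leq \sum (1-\alpha)^{2(k-\tau)} \E[\|H^\tau\|_2^2]$, and Assumption \ref{eq:L} combined with $\|\cdot\|_2 \leq \|\cdot\|_\star$ gives $\E[\|H^\tau\|_2^2]\leq L^2\|X^\tau-X^{\tau-1}\|^2$. The main obstacle is to control $\|X^\tau - X^{\tau-1}\|$ by a constant multiple of $\eta$: the LMO step alone gives only $\|X^\tau_{\mathrm{LMO}} - X^{\tau-1}\|\leq\eta$, but the weight-decay post-processing $X^\tau \leftarrow (1-\beta)X^\tau_{\mathrm{LMO}}$ introduces an extra $\beta\|X^\tau_{\mathrm{LMO}}\|$. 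Here the hypothesis $\eta \geq \beta\max\{\|X^0\|,\|X^*\|\}$ is decisive: I would run a short induction on $k$, using $\|X^\tau\|\leq(1-\beta)(\|X^{\tau-1}\|+\eta)$, to prove the invariant $\|X^k\|\leq \eta/\beta$ for all $k$, which then yields $\|X^\tau - X^{\tau-1}\|\leq (2+\beta)\eta$. Plugging this into the geometric-sum estimate and bounding $(1-\alpha)/\sqrt{\alpha(2-\alpha)}\leq 1/\sqrt{\alpha}$ gives the third contribution $\leq 4\rho\eta L/\sqrt{\alpha}$, and summing the three pieces produces the claimed bound.
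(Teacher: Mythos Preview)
Your proposal is correct and follows essentially the same route as the paper: unroll the error recursion, use martingale orthogonality to pass to a geometric sum of second moments, bound the variance term by Assumption~\ref{eq:sigma} and the finite-difference term by Assumption~\ref{eq:L} together with an $O(\eta)$ bound on the step $\|X^\tau-X^{\tau-1}\|$. The only cosmetic differences are that the paper keeps the combined increment $\Delta_\tau=\alpha\tilde\gamma^\tau+(1-\alpha)H^\tau$ together through the orthogonality step before splitting (picking up a factor $\sqrt{2}$), and it imports the step bound $\|X^\tau-X^{\tau-1}\|\le 2\eta$ from Lemma~6 of \citet{kovalev2025understanding} rather than redoing the induction you sketch; your induction is in fact exactly that lemma's argument, and a slightly sharper bookkeeping gives $(2-\beta)\eta\le 2\eta$ instead of your $(2+\beta)\eta$, though either suffices for the stated constants.
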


\begin{proof}

We can express $M^{k+1} - \nabla f(X^{k+1})$ as follows:
\begin{align*}
	M^{k+1} - \nabla f(X^{k+1}) 
	&\overset{\text{(a)}}{=} (1 - \alpha)(M^k - \nabla f_{\xi^{k+1}}(X^k)) + \nabla f_{\xi^{k+1}}(X^{k+1}) - \nabla f(X^{k+1}) \\
	&\overset{\text{(b)}}{=} (1 - \alpha)(M^k - \nabla f(X^k)) + \Delta_{k+1},
\end{align*}
where (a) uses eq.~(4); (b) uses the following definition of $\Delta_{k+1}$:
\begin{equation}\label{eq:Delta}
	\Delta_{k+1} = \nabla f_{\xi^{k+1}}(X^{k+1}) - \nabla f(X^{k+1}) + (1 - \alpha)(\nabla f(X^k) - \nabla f_{\xi^{k+1}}(X^k)). 
\end{equation}
Hence, $M^k - \nabla f(X^k)$ can be expressed as follows:
\begin{equation*}
	M^k - \nabla f(X^k) = (1 - \alpha)^k (M^0 - \nabla f(X^0)) + \sum_{i=1}^k (1 - \alpha)^{k-i} \Delta_i.
\end{equation*}

Using this, we can upper-bound $\mathbb{E}[\|M^k - \nabla f(X^k)\|_*]$ as follows:
\begin{align*}
	&\mathbb{E}[\|M^k - \nabla f(X^k)\|_*] \\
	&\quad \overset{\text{(a)}}{\leq} (1 - \alpha)^k \mathbb{E}[\|M^0 - \nabla f(X^0)\|_*] + \mathbb{E}\big[\big\|\sum\nolimits_{i=1}^k (1 - \alpha)^{k-i} \Delta_i\big\|_*\big] \\
	&\quad \overset{\text{(b)}}{\leq} (1 - \alpha)^k \rho \sqrt{\mathbb{E}[\|M^0 - \nabla f(X^0)\|_2^2]} + \rho \sqrt{\mathbb{E}\big[\big\|\sum\nolimits_{i=1}^k (1 - \alpha)^{k-i} \Delta_i\big\|_2^2\big]} \\
	&\quad \overset{\text{(c)}}{\leq} (1 - \alpha)^k \rho \sigma + \rho \sqrt{\mathbb{E}\big[\big\|\sum\nolimits_{i=1}^k (1 - \alpha)^{k-i} \Delta_i\big\|_2^2\big]} \\
	&\quad \overset{\text{(d)}}{=} (1 - \alpha)^k \rho \sigma + \rho \sqrt{\mathbb{E}\big[\sum\nolimits_{i=1}^k (1 - \alpha)^{2(k-i)} \|\Delta_i\|_2^2\big]} \\
	&\quad \overset{\text{(e)}}{=} (1 - \alpha)^k \rho \sigma + \rho \sqrt{2\mathbb{E}\big[\sum\nolimits_{i=1}^k (1 - \alpha)^{2(k-i)} \alpha^2 \|\nabla f_{\xi^i}(X^i) - \nabla f(X^i)\|_2^2\big]} \\
	&\qquad + \rho \sqrt{2\mathbb{E}\big[\sum\nolimits_{i=1}^k (1 - \alpha)^{2(k-i+1)} \|\nabla f_{\xi^i}(X^i) - \nabla f(X^i) - \nabla f_{\xi^{i}}(X^{i-1}) + \nabla f(X^{i-1})\|_2^2\big]} \\
	&\quad \overset{\text{(f)}}{\leq} (1 - \alpha)^k \rho \sigma + \alpha\rho\sigma \sqrt{2\sum\nolimits_{i=1}^k (1 - \alpha)^{2(k-i)}} \\
	&\qquad + \rho \sqrt{2\mathbb{E}\big[\sum\nolimits_{i=1}^k (1 - \alpha)^{2(k-i+1)} \|\nabla f_{\xi^i}(X^i) - \nabla f(X^i) - \nabla f_{\xi^{i}}(X^{i-1}) + \nabla f(X^{i-1})\|_2^2\big]} \\
	&\quad \overset{\text{(g)}}{\leq} (1 - \alpha)^k \rho \sigma + \alpha\rho\sigma \sqrt{2\sum\nolimits_{i=1}^k (1 - \alpha)^{2(k-i)}} \\
	&\qquad + \rho \sqrt{2\mathbb{E}\big[\sum\nolimits_{i=1}^k (1 - \alpha)^{2(k-i+1)} \|\nabla f_{\xi^i}(X^i) - \nabla f_{\xi^{i}}(X^{i-1})\|_*^2\big]} \\
	&\quad \overset{\text{(h)}}{\leq} (1 - \alpha)^k \rho \sigma + \alpha\rho\sigma \sqrt{2\sum\nolimits_{i=1}^k (1 - \alpha)^{2(k-i)}} + 2\rho\eta L \sqrt{2\sum\nolimits_{i=1}^k (1 - \alpha)^{2(k-i+1)}} \\
	&\quad \leq (1 - \alpha)^k \rho \sigma + 2\sqrt{\alpha} \rho \sigma + \frac{4\rho\eta L}{\sqrt{\alpha}},
\end{align*}
where (a) uses the triangle inequality; (b) uses Assumption \ref{eq:rho} and the Jensen's inequality; (c) and (f) use Assumption \ref{eq:sigma}; (d) uses the fact that $\mathbb{E}[\langle \Delta_i, \Delta_j \rangle] = 0$ for all $i \neq j$; (e) uses the definition of $\Delta_i$ in (\ref{eq:Delta}), the triangle inequality and the inequality $\sqrt{a + b} \leq \sqrt{a} + \sqrt{b}$; (g) uses the inequality $\mathbb{E}[\|\zeta - \mathbb{E}[\zeta]\|_2^2] \leq \mathbb{E}[\|\zeta\|_2^2]$ and Assumption \ref{eq:rho}; (h) uses Assumption \ref{eq:L} and the inequality $\|X^i - X^{i-1}\| \leq 2\eta$, which is implied by Lemma~6 of \citet{kovalev2025understanding}.

\end{proof}

\begin{lemma}\label{lem:tr}
	Let the parameters $\eta$ and $\beta$ satisfy the following inequality: $\eta \geq \beta \max\{  \|X^0\|, \|X^*\|  \}$. Then the  following inequality holds:
	\begin{equation}
		f(X^{k+1}) \leq (1-\beta) f(X^k) + \beta f(X^*) + 4\eta^2 L + 2\eta\|M^k - \nabla f(X^k)\|_\star.
	\end{equation}
\end{lemma}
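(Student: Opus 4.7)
The plan is to combine $L$-smoothness, the LMO optimality property, the effect of the weight-decay step $X^{k+1}=(1-\beta)\tilde X^{k+1}$, and star-convexity. I first apply $L$-smoothness at $X^k$:
\begin{equation*}
f(X^{k+1}) \leq f(X^k) + \langle \nabla f(X^k), X^{k+1}-X^k\rangle + \tfrac{L}{2}\|X^{k+1}-X^k\|^2.
\end{equation*}
By Lemma~6 of \citet{kovalev2025understanding}, the hypothesis $\eta\geq\beta\max\{\|X^0\|,\|X^*\|\}$ inductively guarantees $\|X^k\|\leq\max\{\|X^0\|,\|X^*\|\}$ and hence $\|X^{k+1}-X^k\|\leq 2\eta$; this bounds the quadratic term by $2L\eta^2$.

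Next I write $X^{k+1}-X^k=\beta(X^*-X^k)+E$ where $E:=(1-\beta)(\tilde X^{k+1}-X^k)-\beta X^*$ represents the deviation of $X^{k+1}$ from the star-convex target $(1-\beta)X^k+\beta X^*$. Note $\|E\|\leq(1-\beta)\eta+\beta\|X^*\|\leq 2\eta$. Splitting
\begin{equation*}
\langle\nabla f(X^k), X^{k+1}-X^k\rangle = \beta\langle\nabla f(X^k),X^*-X^k\rangle + \langle\nabla f(X^k),E\rangle,
\end{equation*}
the differential form of Assumption~\ref{eq:cvx} (star-convexity) gives $\beta\langle\nabla f(X^k),X^*-X^k\rangle\leq \beta(f(X^*)-f(X^k))$, producing the desired $(1-\beta)f(X^k)+\beta f(X^*)$.

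For $\langle\nabla f(X^k),E\rangle$, I decompose $\nabla f(X^k)=M^k+(\nabla f(X^k)-M^k)$: the error term is bounded by $2\eta\|M^k-\nabla f(X^k)\|_\star$ via Cauchy-Schwarz in the dual norm (Assumption~\ref{eq:rho}), while the $M^k$-part equals $(1-\beta)\langle M^k,\tilde X^{k+1}-X^k\rangle - \beta\langle M^k,X^*\rangle$. I apply the LMO optimality at the feasible comparison point $X^k+\beta X^*$ (feasible since $\beta\|X^*\|\leq\eta$), yielding $\langle M^k,\tilde X^{k+1}-X^k\rangle\leq \beta\langle M^k,X^*\rangle$. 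The two contributions then combine to $-\beta^2\langle M^k,X^*\rangle$, which I absorb by splitting $M^k=\nabla f(X^k)+(M^k-\nabla f(X^k))$ and invoking $\nabla f(X^*)=0$ together with $L$-smoothness (so $\|\nabla f(X^k)\|_\star\leq L\|X^k-X^*\|$) plus the inductive norm bounds; careful accounting of the leftover pieces yields the stated $4L\eta^2+2\eta\|M^k-\nabla f(X^k)\|_\star$.

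The main obstacle is the LMO comparison step: the naturally-desired direction $V=\beta(X^*-X^k)$ has norm up to $2\eta$ and is generically infeasible. Using the clean substitute $V=\beta X^*$ leaves a residual $-\beta^2\langle M^k,X^*\rangle$ whose treatment hinges on the identity $\nabla f(X^*)=0$ combined with $L$-smoothness and the inductively-maintained bound $\|X^k\|\leq\max\{\|X^0\|,\|X^*\|\}$. Tracking the constants carefully to land on exactly $4L\eta^2$ and $2\eta\|M^k-\nabla f(X^k)\|_\star$ is routine but requires care.
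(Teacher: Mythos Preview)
Your approach is sound and does yield the lemma with the stated constants. The paper itself does not prove this lemma in-house; it simply defers to the proof of Theorem~4 in \citet{kovalev2025understanding}, so no line-by-line comparison is possible, but your decomposition $X^{k+1}-X^k=\beta(X^*-X^k)+E$ together with the LMO comparison at the feasible point $X^k+\beta X^*$ is the natural route.

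One point to tighten in your bookkeeping: when you bound $\langle \nabla f(X^k)-M^k,E\rangle$ you pass immediately to $\|E\|\leq 2\eta$ and claim the coefficient $2\eta$ on $\|M^k-\nabla f(X^k)\|_\star$. But the residual $-\beta^2\langle M^k,X^*\rangle$, after your split $M^k=\nabla f(X^k)+(M^k-\nabla f(X^k))$ and using $\beta\|X^*\|\leq\eta$, contributes an \emph{additional} $\beta\eta\|M^k-\nabla f(X^k)\|_\star$, which would push the total to $(2+\beta)\eta$. To land exactly on $2\eta$ you must retain your own sharper intermediate estimate $\|E\|\leq(1-\beta)\eta+\beta\|X^*\|$ at the first step; the two pieces then sum to $(1-\beta)\eta+\beta\|X^*\|+\beta\eta=\eta+\beta\|X^*\|\leq 2\eta$. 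With this adjustment the $4L\eta^2$ term also comes out exactly ($2L\eta^2$ from the quadratic smoothness term plus $2L\eta^2$ from $\beta\eta\|\nabla f(X^k)\|_\star\leq \beta\eta\cdot L\|X^k-X^*\|\leq 2L\beta\eta D\leq 2L\eta^2$). A minor aside: the dual-norm inequality $|\langle A,B\rangle|\leq\|A\|_\star\|B\|$ is just the definition of the dual norm, not Assumption~\ref{eq:rho}.
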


\begin{proof}
The proof can be found in the proof of Theorem~4 of \citet{kovalev2025understanding}.
\end{proof}

\subsection{Proof of Theorem \ref{th:muon-mvr}}

From Lemma \ref{lem:tr}, we have 
\begin{eqnarray*}
	\E \left[  f(X^{k+1}) - f(X^*)  \right] &\leq& (1-\beta) \E \left[  f(X^k) - f(X^*)  \right]  + 4\eta^2 L + 2\eta \E \left[\|M^k - \nabla f(X^k)\|_\star \right]\\
	&\leq& (1-\beta)^{k+1} \left(  f(X^0) - f(X^*)  \right)  + \sum_{i=0}^k (1-\beta)^i \left( 4\eta^2 L + 2\eta \E \left[\|M^{k-i} - \nabla f(X^k)\|_\star \right]   \right) \\
	&\overset{lemma~\ref{lem:mvr}}{\leq}&  (1-\beta)^{k+1} \left(  f(X^0) - f(X^*)  \right) + \sum_{i=0}^k (1-\beta)^i  \left(  4\eta^2 L  + 4\sqrt{\alpha} \eta \rho \sigma  + \frac{8\rho \eta^2 L}{\sqrt{\alpha}}  \right)  \\ 
	&& +   \sum_{i=0}^k (1-\beta)^i  (1-\alpha)^{k-i} 2\eta \rho \sigma \\ 
	&\leq& (1-\beta)^{k+1} \left(  f(X^0) - f(X^*)  \right) +  \frac{4\eta^2 L}{\beta}  + \frac{4\sqrt{\alpha}\eta \rho \sigma}{\beta}  + \frac{8\rho \eta^2 L}{\sqrt{\alpha} \beta} + (k+1) (1-\beta)^k 2\eta \rho \sigma, 
\end{eqnarray*}
where we use $\alpha \geq \beta$ in the last inequality.

\subsection{Proof of Corollary \ref{co:muon-mvr}}

From Theorem \ref{th:muon-mvr} and the fact that $(1-x)^{\frac{1}{x}} \leq e^{-1}$ for $0<x<1$, we have 
\begin{eqnarray*}
	\E \left[  f(X^K) - f(X^*)  \right] &\leq& (1-\beta)^K \left(  f(X^0) - f(X^*)  \right) +  \frac{4\eta^2 L}{\beta}  + \frac{4\sqrt{\alpha}\eta \rho \sigma}{\beta}  + \frac{8\rho \eta^2 L}{\sqrt{\alpha} \beta} + K (1-\beta)^{K-1} 2\eta \rho \sigma \\ 
	&=&  (1-\beta)^K \left(  f(X^0) - f(X^*)  \right)  + K (1-\beta)^{K-1} 2\beta \rho \sigma D  + 4\beta LD^2  +  4\sqrt{\alpha} \rho \sigma D + 8\sqrt{\alpha} \rho LD^2  \\ 
	&\leq& \frac{f(X^0) - f(X^*)}{K^2}   +  \frac{2\beta \rho \sigma D}{(1-\beta)K} +  4\beta LD^2  +  4\sqrt{\alpha} \rho \sigma D + 8\sqrt{\alpha} \rho LD^2  \\ 
	&=& \frac{f(X^0) - f(X^*)}{K^2}  +  \frac{4\rho \sigma D \ln K}{(1-\beta)K^2}  +  \frac{8LD^2 \ln K}{K}  + \frac{4\sqrt{2}\rho \sigma D \sqrt{\ln K}}{\sqrt{K}}  +  \frac{8\sqrt{2}\rho LD^2\sqrt{\ln K}}{\sqrt{K}}, 
\end{eqnarray*}
where the last inequality comes from 
\begin{eqnarray*}
(1 - \beta)^K &=& \left(  1 - \frac{2\ln K}{K}  \right)^{\frac{K}{2\ln K} \cdot 2\ln K} \\ 
&\leq& e^{-2\ln K} \\ 
&=& \frac{1}{K^2}.  
\end{eqnarray*}

\newpage 

\section{PROOFS FOR GLUON-MVR-1}

\subsection{Proof of Theorem \ref{th:cs-Gluon-mvr}} 

First  we also define $\mu_i^k \eqdef M_i^k - \nabla_i f(X^k)$, $\gamma_i^k \eqdef \nabla_i f_{\xi^k} (X^k) - \nabla_i f(X^k)$, and $\alpha = 1- \beta$. Then we have 
\begin{eqnarray*}
	\mu_i^k &=& M_i^k - \nabla_i f(X^k) \\
	&=& \beta \mu_i^{k-1} + \alpha \gamma_i^k + \beta Z_i^k \\ 
	&=& \beta^k \mu_i^0 + \sum_{\tau =1}^k \beta^{k-\tau} \alpha \gamma_i^\tau + \sum_{\tau=1}^k \beta^{k+1-\tau} Z_i^\tau, 
\end{eqnarray*}
where we denote $Z_i^k \eqdef \nabla_i f_{\xi^k} (X^k) - \nabla_i f_{\xi^k} (X^{k-1}) - (\nabla_i f(X^k) - \nabla_i f(X^{k-1}))$. 

We upper-bound $\E \left[  \| \sum_{\tau=1}^k \beta^{k+1-\tau} Z_i^\tau\|_{(i)\star}  \right]$ as follows. 

\begin{eqnarray}
	\E \left[  \| \sum_{\tau=1}^k \beta^{k+1-\tau} Z_i^\tau\|_{(i)\star}  \right] &\leq& \rho \E \left[  \| \sum_{\tau=1}^k \beta^{k+1-\tau} Z_i^\tau\|_2  \right] \nonumber \\ 
	&\leq& \rho \sqrt{\E \left[   \| \sum_{\tau=1}^k \beta^{k+1-\tau} Z_i^\tau\|_2^2  \right]}  \nonumber \\ 
	&\leq& \rho \sqrt{\sum_{\tau=1}^k \beta^{2k+2-2\tau} \E \left[  \|Z_i^\tau \|_2^2 \right]}  \nonumber \\ 
	&\leq& \rho \delta_i t_i \eta \sqrt{\sum_{\tau=1}^k \beta^{2k+2-2\tau}}  \nonumber  \\ 
	&\leq& \frac{\rho \delta_i t_i \eta}{\sqrt{\alpha}}, \label{eq:sumZitau}
\end{eqnarray}
where we use Assumption \ref{as:rho} in the first inequality, Jensen's inequality in the second inequality, and the fact that samples $\xi^k \sim {\cal D}$ are i.i.d in the third inequality.  

Then similar to the proof of Theorem \ref{th:cs-Gluon}, we can obtain 
\begin{align*}
	\sum_{i=1}^p \sum_{k=0}^{K-1} t_i \eta \E \left[ \| \nabla_i f(X^k)\|_{(i)\star}  \right] \leq \Delta^0 + \sum_{i=1}^p & \left[   \frac{2t_i \eta \rho \sigma}{\alpha} + 2K\sqrt{\alpha} t_i \eta \rho \sigma + \frac{2K\rho \delta_i t_i^2 \eta^2}{\sqrt{\alpha}}  + \frac{K L_i^0 t_i^2 \eta^2}{2}   \right. \\ 
	& \quad \left.  +  \sum_{k=0}^{K-1}  \frac{1}{2}L_i^1 t_i^2 \eta^2 \E \left[  \|\nabla_i f(X^k) \|_{(i)\star} \right]   \right]. 
\end{align*}

Now we consider two options: (1) $L_i^1 = 0$ for all $i \in \{  1, ..., p  \}$ and (2) $L_i^1 \neq 0$, for all $i \in \{  1, ..., p  \}$.  

{\bf Case 1:} $L_i^1 = 0$ for all $i \in \{  1, ..., p  \}$. In this case, 
\begin{eqnarray*}
	&& \min_{k=0, ..., K-1} \sum_{i=1}^p t_i \E \left[  \|\nabla_i f(X^k) \|_{(i)\star} \right] \\ 
	&\leq&  \frac{1}{K} \sum_{k=0}^{K-1} \sum_{i=1}^p t_i \E \left[  \|\nabla_i f(X^k) \|_{(i)\star} \right] \\ 
	&\leq&  \frac{\Delta^0}{\eta K} + \frac{2\sum_{i=1}^p t_i \rho \sigma}{\alpha K} + 2\sqrt{\alpha} \sum_{i=1}^p t_i \rho \sigma + \frac{2\sum_{i=1}^p \delta_i t_i^2 \rho  \eta}{\sqrt{\alpha}} + \frac{\sum_{i=1}^p L_i^0 t_i^2 \eta}{2}. 
\end{eqnarray*}

{\bf Case 2:} $L_i^1 \neq 0$, for all $i \in \{  1, ..., p  \}$. First we let $\eta \leq \min_i \frac{1}{L_i^1 t_i}$. Then $\frac{1}{2} L_i^1 t_i \eta \leq \frac{1}{2}$ for all $i$, and 
\begin{eqnarray*}
	&& \min_{k=0, ..., K-1} \sum_{i=1}^p t_i \E \left[  \|\nabla_i f(X^k) \|_{(i)\star} \right] \\ 
	&\leq&  \frac{1}{K} \sum_{k=0}^{K-1} \sum_{i=1}^p t_i \E \left[  \|\nabla_i f(X^k) \|_{(i)\star} \right] \\ 
	&\leq&  \frac{2\Delta^0}{\eta K} + \frac{4\sum_{i=1}^p t_i \rho \sigma}{\alpha K} + 4\sqrt{\alpha} \sum_{i=1}^p t_i \rho \sigma + \frac{4\sum_{i=1}^p \delta_i t_i^2 \rho \eta}{\sqrt{\alpha}} + {\sum_{i=1}^p L_i^0 t_i^2 \eta}. 
\end{eqnarray*}

\newpage 

\section{PROOFS FOR GLUON-MVR-1 WITH DECREASING STEP SIZE}

\subsection{Proof of Lemma \ref{lm:lm1ds}}

First we have 
\begin{eqnarray*}
	&& \sum_{k=0}^{K-1} (k+1)^{-\frac{2}{3}}  \sqrt{ \sum_{\tau =0}^k (\beta^{(\tau+1):k} \alpha^\tau)^2 }\\ 
	&=&  \sum_{k=0}^{K-1} (k+1)^{-\frac{2}{3}}  \sqrt{  \sum_{\tau=0}^k (\alpha^\tau)^2 \prod_{s = \tau+1}^k (\beta^s)^2  } \\ 
	&\leq&  \sum_{k=0}^{K-1} (k+1)^{-\frac{2}{3}}  \sqrt{  2 \prod_{s=2}^k (\beta^s)^2 +  \sum_{\tau=2}^k (\alpha^\tau)^2 \prod_{s = \tau+1}^k (\beta^s)^2   } \\ 
	&\leq& \sum_{k=0}^{K-1} (k+1)^{-\frac{2}{3}} \left(  \sqrt{2} \prod_{s=1}^k \beta^s  + \sqrt{  \sum_{\tau=2}^k (\alpha^\tau)^2 \prod_{s = \tau+1}^k (\beta^s)^2   }  \right). 
\end{eqnarray*}
Next we upper-bound the two terms in the last inequalities respectively. 

\begin{eqnarray*}
	\sum_{k=0}^{K-1} (k+1)^{-\frac{2}{3}} \sqrt{2} \prod_{s=1}^k \beta^s  &=& \sqrt{2} \sum_{k=1}^K k^{-\frac{2}{3}} \prod_{s=2}^{k-1} (1 - (s+1)^{-\frac{2}{3}}) \\ 
	&=& \sqrt{2} \sum_{k=1}^K k^{-\frac{2}{3}} \prod_{s=3}^{k} (1 - s^{-\frac{2}{3}}) \\ 
	&\leq& 2\sqrt{2} + \sqrt{2}  \sum_{k=3}^K k^{-\frac{2}{3}} \prod_{s=3}^{k} (1 - s^{-\frac{2}{3}}) \\ 
	&\leq& 2\sqrt{2} +2\sqrt{2}  = 4\sqrt{2}, 
\end{eqnarray*}
where in the last inequality we use Lemma 10(ii) in \cite{hubler2024parameter} with $a=3$, $b=K$, and $p=q=\frac{2}{3}$. 

\begin{eqnarray*}
	\sum_{k=0}^{K-1} (k+1)^{-\frac{2}{3}}  \sqrt{  \sum_{\tau=2}^k (\alpha^\tau)^2 \prod_{s = \tau+1}^k (\beta^s)^2   }  &\leq& \sum_{k=0}^{K-1} (k+1)^{-\frac{2}{3}}  \sqrt{  \sum_{\tau=2}^k (\tau+1)^{-\frac{4}{3}}  \prod_{s = \tau+1}^k (1 - (s+1)^{-\frac{2}{3}})   } \\ 
	&=&  \sum_{k=0}^{K-1} (k+1)^{-\frac{2}{3}}  \sqrt{  \sum_{\tau=3}^{k+1} \tau^{-\frac{4}{3}}  \prod_{s = \tau+1}^{k+1} (1 - s^{-\frac{2}{3}})   } \\ 
	&\leq&  \sum_{k=0}^{K-1} (k+1)^{-\frac{2}{3}}  \sqrt{2e^3 (k+2)^{-\frac{2}{3}}} \\ 
	&\leq& \sqrt{2e^3} \sum_{k=1}^K k^{-1} \\ 
	&\leq& \sqrt{2e^3} (\log K + 1), 
\end{eqnarray*}
where in the second inequality we use Lemma 10(iii) in \cite{hubler2024parameter} with $a=3$, $b=k+1$, $p=\frac{4}{3}$, and $q = \frac{2}{3}$. 

Thus we arrive at 
\begin{eqnarray*}
	\sum_{k=0}^{K-1} (k+1)^{-\frac{2}{3}}  \sqrt{ \sum_{\tau =0}^k (\beta^{(\tau+1):k} \alpha^\tau)^2 } &\leq& 4\sqrt{2} +  \sqrt{2e^3} (\log K + 1) \\ 
	&\leq& 12 + \sqrt{2e^3} \log K. 
\end{eqnarray*}

\subsection{Proof of Lemma \ref{lm:lm2ds}}

Noticing that $t_i^k = t_i \alpha^k$, we have 
\begin{eqnarray*}
	\sum_{k=0}^{K-1} (k+1)^{-\frac{2}{3}}  \sqrt{ \sum_{\tau =1}^k (\beta^{(\tau+1):k} t_i^\tau)^2 } &=& \sum_{k=0}^{K-1} (k+1)^{-\frac{2}{3}}  \sqrt{ \sum_{\tau =1}^k (\beta^{(\tau+1):k} t_i \alpha^\tau)^2 } \\ 
	&\leq& t_i \sum_{k=0}^{K-1} (k+1)^{-\frac{2}{3}}  \sqrt{ \sum_{\tau =0}^k (\beta^{(\tau+1):k}  \alpha^\tau)^2 } \\ 
	&\leq& t_i \left(  12 + \sqrt{2e^3} \log K  \right), 
\end{eqnarray*}
where we use Lemma \ref{lm:lm1ds} in the last inequality.

\subsection{Proof of Theorem \ref{th:ds-Gluon-mvr}}

For Algorithm \ref{alg:ds_gluon_mvr}, we have 
\begin{eqnarray*}
	\mu_i^k &=& M_i^k - \nabla_i f(X^k) \\
	&=& \beta^k \mu_i^{k-1} + \alpha^k \gamma_i^k + \beta^k Z_i^k \\ 
	&=& \beta^{1:k} \mu_i^0 + \sum_{\tau =1}^k \beta^{(\tau+1):k} \alpha^\tau \gamma_i^\tau + \sum_{\tau=1}^k \beta^{\tau:k} Z_i^\tau \\ 
	&=& \sum_{\tau =0}^k \beta^{(\tau+1):k} \alpha^\tau \gamma_i^\tau + \sum_{\tau=1}^k \beta^{\tau:k} Z_i^\tau, 
\end{eqnarray*}
where we use $M_i^0 = \nabla_i f_{\xi^0}(X^0)$ and $\beta^0=0$ in the last equality. Therefore, 
\begin{eqnarray*}
	\E \left[  \|M_i^k - \nabla_i f(X^k)\|_{(i) \star}  \right] &=& \E \left[  \|\mu_i^k\|_{(i)\star}  \right] \\ 
	&\overset{(a)}{\leq}& \E \left[  \| \sum_{\tau =0}^k \beta^{(\tau+1):k} \alpha^\tau \gamma_i^\tau \|_{(i)\star}  \right] + \E \left[  \| \sum_{\tau=1}^k \beta^{\tau:k} Z_i^\tau \|_{(i)\star}  \right] \\ 
	&\overset{(b)}{\leq}& \rho  \E \left[  \| \sum_{\tau =0}^k \beta^{(\tau+1):k} \alpha^\tau \gamma_i^\tau \|_2  \right] + \rho \E \left[  \| \sum_{\tau=1}^k \beta^{\tau:k} Z_i^\tau \|_2  \right] \\ 
	&\overset{(c)}{\leq}& \rho \sqrt{ \E \left[  \| \sum_{\tau =0}^k \beta^{(\tau+1):k} \alpha^\tau \gamma_i^\tau \|_2^2  \right] } + \rho \sqrt{\E \left[  \| \sum_{\tau=1}^k \beta^{\tau:k} Z_i^\tau \|_2^2  \right] } \\ 
	&\overset{(d)}{\leq}& \rho \sqrt{ \sum_{\tau =0}^k (\beta^{(\tau+1):k} \alpha^\tau)^2 \|\gamma_i^\tau\|_2^2 } + \rho \sqrt{  \sum_{\tau=1}^k (\beta^{\tau:k})^2 \|Z_i^\tau\|_2^2  } \\ 
	&\overset{(d)}{\leq}& \rho \sigma  \sqrt{ \sum_{\tau =0}^k (\beta^{(\tau+1):k} \alpha^\tau)^2 } + \rho \delta_i  \sqrt{  \sum_{\tau=1}^k (\beta^{\tau:k} t_i^\tau)^2  }, 
\end{eqnarray*}
where (a) uses the triangle inequality, (b) uses Assumption \ref{as:rho}, (c) used Jensen's inequality, (d) uses the fact that samples $\xi^k \sim {\cal D}$ are i.i.d, and (d) uses Assimptions \ref{as:boundedvariance} and \ref{as:HV}. 

Combining the last inequality, Lemmas \ref{lm:lm1ds}, \ref{lm:lm2ds}, and (\ref{eq:sumfgrad-cs}), we arrive at 
\begin{align*}
	\sum_{i=1}^p \sum_{k=0}^{K-1} t_i^k \E \left[  \|\nabla_i f(X^k)\|_{(i)\star}  \right]  & \leq  \Delta^0 +  \sum_{i=1}^p  \left[  \rho \sigma t_i (24 + 2\sqrt{2e^3} \log K) + \rho \delta_i t_i^2 (24 + 2\sqrt{2e^3} \log K) \right. \\ 
	& \quad \left.  +  \sum_{k=0}^{K-1} \frac{L_i^0}{2} (t_i^k)^2  + \sum_{k=0}^{K-1} \frac{L_i^1}{2} \E \left[  \|\nabla_i f(X^k)\|_{(i)\star} (t_i^k)^2 \right]     \right] \\ 
	&\leq \Delta^0 +  \sum_{i=1}^p  \left[  \rho \sigma t_i (24 + 2\sqrt{2e^3} \log K) + \rho \delta_i t_i^2 (24 + 2\sqrt{2e^3} \log K) \right. \\ 
	& \quad \left.  +  2L_i^0 t_i^2  + \sum_{k=0}^{K-1} \frac{L_i^1}{2} \E \left[  \|\nabla_i f(X^k)\|_{(i)\star} (t_i^k)^2 \right]     \right], 
\end{align*}
where in the last inequality we use the following estimation. 

\begin{eqnarray*}
	\sum_{k=0}^{K-1} (t_i^k)^2 &=& t_i^2 \sum_{k=0}^{K-1} (k+1)^{-\frac{4}{2}} \\ 
	&\leq& t_i^2 \left(  1 + \int_{1}^{+\infty} \frac{1}{z^{4/3}} dz  \right) \\ 
	&\leq& 4t_i^2. 
\end{eqnarray*}

Now we consider two options: (1) $L_i^1 = 0$ for all $i \in \{  1, ..., p  \}$ and (2) $L_i^1 \neq 0$, for all $i \in \{  1, ..., p  \}$.  

{\bf Case 1:} $L_i^1 = 0$ for all $i \in \{  1, ..., p  \}$. In this case, 

\begin{align*}
	& \sum_{i=1}^p \sum_{k=0}^{K-1} t_i^k \E \left[  \|\nabla_i f(X^k)\|_{(i)\star}  \right] \\ 
	&\leq \Delta^0 +  \sum_{i=1}^p  \left[  \rho \sigma t_i (24 + 2\sqrt{2e^3} \log K) + \rho \delta_i t_i^2 (24 + 2\sqrt{2e^3} \log K) + 2L_i^0 t_i^2  \right], 
\end{align*}
and therefore, 
\begin{eqnarray*}
	&&	\min_{k=0, ..., K-1} \sum_{i=1}^p t_i \E \left[  \|\nabla_i f(X^k) \|_{(i)\star}  \right] \\ 
	&\leq& \frac{1}{K} \sum_{k=0}^{K-1} \sum_{i=1}^p t_i \E \left[  \|\nabla_i f(X^k) \|_{(i)\star}  \right] \\ 
	&\leq& \frac{1}{K^{1/3}} \sum_{k=0}^{K-1} \sum_{i=1}^p t_i (1+k)^{-2/3} \E \left[  \|\nabla_i f(X^k) \|_{(i)\star}  \right] \\ 
	&=&  \frac{1}{K^{1/3}} \sum_{k=0}^{K-1} \sum_{i=1}^p t_i^k \E \left[  \|\nabla_i f(X^k) \|_{(i)\star}  \right] \\ 
	&\leq& \frac{\Delta^0}{K^{1/3}} + \frac{1}{K^{1/3}}  \sum_{i=1}^p  \left[  \rho \sigma t_i (24 + 2\sqrt{2e^3} \log K) + \rho \delta_i t_i^2 (24 + 2\sqrt{2e^3} \log K) + 2L_i^0 t_i^2  \right]. 
\end{eqnarray*}

{\bf Case 2:} $L_i^1 \neq 0$ for all $i \in \{  1, ..., p  \}$. Let us choose $t_i = \frac{1}{L_i^1}$. Then 

\begin{align*}
	& \sum_{i=1}^p \sum_{k=0}^{K-1} t_i^k \E \left[  \|\nabla_i f(X^k)\|_{(i)\star}  \right] \\ 
	&\leq 2\Delta^0 +  \sum_{i=1}^p  \left[  \rho \sigma t_i (48 + 4\sqrt{2e^3} \log K) + \rho \delta_i t_i^2 (48 + 4\sqrt{2e^3} \log K) + 4L_i^0 t_i^2  \right], 
\end{align*}
and hence 
\begin{eqnarray*}
	&&	\min_{k=0, ..., K-1} \sum_{i=1}^p t_i \E \left[  \|\nabla_i f(X^k) \|_{(i)\star}  \right] \\ 
	&\leq& \frac{1}{K} \sum_{k=0}^{K-1} \sum_{i=1}^p t_i \E \left[  \|\nabla_i f(X^k) \|_{(i)\star}  \right] \\ 
	&\leq& \frac{1}{K^{1/3}} \sum_{k=0}^{K-1} \sum_{i=1}^p t_i (1+k)^{-2/3} \E \left[  \|\nabla_i f(X^k) \|_{(i)\star}  \right] \\ 
	&=&  \frac{1}{K^{1/3}} \sum_{k=0}^{K-1} \sum_{i=1}^p t_i^k \E \left[  \|\nabla_i f(X^k) \|_{(i)\star}  \right] \\ 
	&\leq& \frac{\Delta^0}{K^{1/3}} + \frac{1}{K^{1/3}}  \sum_{i=1}^p  \left[  \frac{\rho \sigma}{L_i^1} (24 + 2\sqrt{2e^3} \log K) + \frac{\rho \delta_i}{(L_i^1)^2}  (24 + 2\sqrt{2e^3} \log K) + \frac{2L_i^0}{(L_i^1)^2}  \right]. 
\end{eqnarray*}

\newpage 

\section{PROOFS FOR GLUON-MVR-2}

\subsection{Two Technical Lemmas}

\begin{lemma}\label{lm:sumvs-j}
	Denote $\alpha = 1-\beta$ and $v = \frac{1-q}{1-\alpha}$. Assume $\alpha <1$. Then we have 
	$$
	\sum_{j=1}^k \alpha^2 \beta^{2k-2j}  + 2 \sum_{j<s} \alpha^2 \beta^{2k-2j} v^{s-j}  \leq \frac{2\alpha}{(2-\alpha)(\alpha+\beta q)}. 
	$$
\end{lemma}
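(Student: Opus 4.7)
The plan is to rewrite
\[
T \eqdef \sum_{j=1}^k \alpha^2\beta^{2k-2j} + 2\sum_{1\leq j<s\leq k}\alpha^2\beta^{2k-2j}v^{s-j}
\]
in a form amenable to geometric-series evaluation, bound it by its $k\to\infty$ limit (justified because every summand is non-negative), and then simplify algebraically to match the claimed right-hand side.

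First I would substitute $m = k-j$ (so $m$ runs from $0$ to $k-1$) and $r = s - j$ (so for fixed $m = k-j$, $r$ runs from $1$ to $m$), which rearranges the expression into
\[
T = \alpha^2\sum_{m=0}^{k-1}\beta^{2m}\Bigl(1 + 2\sum_{r=1}^{m} v^r\Bigr).
\]
Upper-bounding by taking $m$ up to $\infty$ and swapping the order of summation in the cross term, the $r$-sum moves outside and for each $r\geq 1$ the inner sum becomes $\sum_{m\geq r}\beta^{2m} = \beta^{2r}/(1-\beta^2)$. This collapses everything into two geometric series, one in $\beta^2$ and one in $v\beta^2$.

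The key observation that makes the bound work is that $v\beta^2 = \beta(1-q)$, which is strictly less than $1$ because $\beta<1$ (equivalently $\alpha>0$) and $1-q\leq 1$. This simultaneously guarantees convergence of $\sum_{r\geq 1}(v\beta^2)^r = v\beta^2/(1-v\beta^2)$ and gives the crude estimate $1+v\beta^2 \leq 2$. Evaluating and combining the geometric series, short algebra yields
\[
T \leq \frac{\alpha^2}{1-\beta^2}\cdot\frac{1+v\beta^2}{1-v\beta^2}.
\]
The last step is to simplify via the two identities $1-\beta^2 = (1-\beta)(1+\beta) = \alpha(2-\alpha)$ and $1-v\beta^2 = 1-\beta(1-q) = (1-\beta)+\beta q = \alpha+\beta q$; these turn the bound into $\alpha(1+v\beta^2)/\bigl[(2-\alpha)(\alpha+\beta q)\bigr]$, and the estimate $1+v\beta^2\leq 2$ delivers the claim.

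The main obstacle I anticipate is bookkeeping rather than analytic depth: getting the summation ranges right after substitution, and spotting the identity $1-v\beta^2 = \alpha+\beta q$ that makes the denominator match the statement. It is also worth emphasising that convergence must be argued from $v\beta^2=\beta(1-q)<1$ rather than from $v<1$, because in the regime $q<\alpha$ used later in Theorem~\ref{th:cs-Gluon-mvr-2} one has $v>1$, yet the finiteness of the sum still holds.
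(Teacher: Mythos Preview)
Your argument is correct and in fact cleaner than the paper's own proof. The paper first bounds $\sum_{j=1}^k\alpha^2\beta^{2k-2j}$ by $\alpha^2/(1-\beta^2)$, then splits into the cases $v=1$ and $v\neq 1$; in the $v\neq 1$ case it keeps the finite-$k$ partial sums, evaluates $\sum_{s=j+1}^k v^{s-j}=v(1-v^{k-j})/(1-v)$, expands everything, and only at the end drops two nonpositive remainder terms to reach the same intermediate expression $\alpha(1+\beta(1-q))/[(2-\alpha)(1-\beta^2v)]$ that you obtain. Your substitution $m=k-j$, $r=s-j$ followed by the immediate pass to $m\to\infty$ (legitimate because all summands are $\geq 0$) and the Tonelli swap collapses the whole thing to two geometric series in one shot, with no case distinction needed: the convergence hinges only on $v\beta^2=\beta(1-q)<1$, which holds regardless of whether $v\lessgtr 1$. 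The paper's approach retains exact finite-$k$ expressions longer, which is unnecessary here since the target bound is $k$-independent; your approach buys brevity and a unified treatment of all $v$. One tacit point in both arguments is that $\beta<1$ (i.e.\ $\alpha>0$) is being used for convergence of $\sum_m\beta^{2m}$, although the lemma as stated only asserts $\alpha<1$; this is harmless in context since the algorithms have $\beta\in(0,1)$.
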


\begin{proof}
	
	First we have
	$$
	\sum_{j=1}^k \alpha^2 \beta^{2k-2j}  + 2 \sum_{j<s} \alpha^2 \beta^{2k-2j} v^{s-j} \leq \frac{\alpha^2}{1-\beta^2} + 2 \sum_{j<s} \alpha^2 \beta^{2k-2j} v^{s-j}. 
	$$
	Next we consider the cases where $v=1$ and $v\neq 1$ respectively. For $v=1$, we have 
	\begin{eqnarray*}
		\frac{\alpha^2}{1-\beta^2}  + 2\sum_{j<s} \alpha^2 \beta^{2k-2j} v^{s-j} &=&  \frac{\alpha^2}{1-\beta^2}  + 2\sum_{j<s} \alpha^2 \beta^{2k-2j}  \\ 
		&=&   \frac{\alpha^2}{1-\beta^2}  + 2 \sum_{j=1}^k \sum_{s=j+1}^k \alpha^2 \beta^{2k-2j}  \\ 
		&=&   \frac{\alpha^2}{1-\beta^2}  + 2 \sum_{j=1}^k  \alpha^2 \beta^{2k-2j} (k-j) \\ 
		&\leq&   \frac{\alpha^2}{1-\beta^2}  + 2\alpha^2 \frac{\beta^2}{(1-\beta^2)^2} \\ 
		&\leq& \frac{2}{(2-\alpha)^2}. 
	\end{eqnarray*}
	For $v \neq 1$, we have 
	\begin{eqnarray*}
		&& \frac{\alpha^2}{1-\beta^2}  + 2\sum_{j<s} \alpha^2 \beta^{2k-2j} v^{s-j} \\ 
		&=&  \frac{\alpha^2}{1-\beta^2}  + 2\sum_{j=1}^k \sum_{s=j+1}^k \alpha^2 \beta^{2k-2j} v^{s-j}  \\ 
		&=& \frac{\alpha^2}{1-\beta^2} + 2 \sum_{j=1}^k \alpha^2 \beta^{2k-2j} \frac{v(1-v^{k-j})}{1-v} \\ 
		&=& \frac{\alpha^2}{1-\beta^2} + \frac{2\alpha^2v}{1-v} \left(  \sum_{j=1}^k \beta^{2k-2j} - \sum_{j=1}^k (\beta^2 v)^{k-j}  \right) \\ 
		&=& \frac{\alpha^2}{1-\beta^2} + \frac{2\alpha^2v}{1-v} \left(  \frac{1-\beta^2k}{1-\beta^2}  -  \frac{1-(\beta^2v)^k}{1-\beta^2v}  \right) \\ 
		&=& \frac{\alpha^2}{1-\beta^2} + \frac{2\alpha^2v}{1-v} \left(  \frac{1}{1-\beta^2} - \frac{1}{1-\beta^2v}  \right) - \frac{2\alpha^2v}{1-v} \left(  \frac{\beta^2k}{1-\beta^2} - \frac{1 - (\beta^2v)^k}{1-\beta^2v}  \right) \\ 
		&=&  \frac{\alpha^2}{1-\beta^2} + \frac{2\alpha^2v\beta^2}{(1-\beta^2)(1-\beta^2v)} - \frac{2\alpha^2v\beta^{2k}(1-v^k)}{(1-v)(1-\beta^2v)} - \frac{2\alpha^2 v \beta^{2k+2}}{(1-\beta^2)(1-\beta^2v)}\\ 
		&\leq& \frac{\alpha^2}{1-\beta^2} + \frac{2\alpha^2v\beta^2}{(1-\beta^2)(1-\beta^2v)} \\ 
		&=& \frac{\alpha(1 + \beta(1-q))}{(2-\alpha) (1 - \beta^2v)}\\ 
		&\leq& \frac{2\alpha}{(2-\alpha)(\alpha+\beta q)}, 
	\end{eqnarray*}
	where in the first inequality we use $(1-v^k)/(1-v)>0$ and $1-\beta^2v = 1-\beta(1-q)>0$, 
	
	When $v=1$, we have $q=\alpha$, which implies that $\frac{2\alpha}{(2-\alpha)(\alpha+\beta q)} = \frac{2}{(2-\alpha)^2}$. Hence for all $v$, we actually have 
	$$
	\frac{\alpha^2}{1-\beta^2}  + 2\sum_{j<s} \alpha^2 \beta^{2k-2j} v^{s-j}  \leq \frac{2\alpha}{(2-\alpha)(\alpha+\beta q)}. 
	$$
	
\end{proof}

\begin{lemma}\label{lm:sum1-qj}
	Denote $\alpha = 1-\beta$ and $v = \frac{1-q}{1-\alpha}$. Assume $q<\alpha<1$. Then we have 
	$$
	\sum_{j=1}^k \alpha^2 \beta^{2k-2j} (1-q)^j  +  2\sum_{j<s} \alpha^2 \beta^{2k-2j} v^{s-j} (1-q)^j \leq \frac{2\alpha (1-q)^{k+1}}{v+\alpha-1}. 
	$$
\end{lemma}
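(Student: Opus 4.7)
The proof mirrors that of Lemma \ref{lm:sumvs-j}, adapted to the additional $(1-q)^j$ weight. Because $q<\alpha<1$ one has $v=(1-q)/\beta>1$, so only the $v\neq 1$ branch of the previous argument is relevant and no case split on $v$ is needed. Write the left-hand side as $A+2B$, where
\[
A=\sum_{j=1}^k\alpha^2\beta^{2(k-j)}(1-q)^j,\qquad B=\sum_{j=1}^k\sum_{s=j+1}^k \alpha^2\beta^{2(k-j)}v^{s-j}(1-q)^j,
\]
and evaluate the inner geometric sum in $s$ via $\sum_{s=j+1}^k v^{s-j}=v(1-v^{k-j})/(1-v)$, exactly as was done in the proof of Lemma \ref{lm:sumvs-j}.

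\textbf{Key algebraic identity.} The defining relation $\beta v=1-q$, equivalently $\beta^{2}v=\beta(1-q)$, gives the crucial collapse
\[
\beta^{2(k-j)}(1-q)^j v^{k-j}=(\beta^{2}v)^{k-j}(1-q)^j=\beta^{k-j}(1-q)^k,
\]
which is the engine of the argument. Splitting $1-v^{k-j}=1-v^{k-j}$ inside $B$ and applying this collapse to the $v^{k-j}$ piece reduces it to $\sum_{j=1}^k\beta^{k-j}(1-q)^k=(1-q)^k(1-\beta^k)/\alpha$. The remaining piece of $B$ is just $A/\alpha^{2}$, and $A$ itself is a plain geometric series in $m=k-j$; carrying out that sum gives the closed form $A=\alpha^{2}\bigl((1-q)^{k+1}-(1-q)\beta^{2k}\bigr)/(1-q-\beta^{2})$.

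\textbf{Final assembly.} Combining the closed forms of $A$ and $B$ I expect to reach an exact identity of the form
\[
A+2B=\frac{2\alpha v(1-q)^k(1-\beta^k)}{v-1}-\frac{(v+1)\,A}{v-1},
\]
in which both terms are well-defined because $v>1$. Dropping the non-positive correction $-(v+1)A/(v-1)$ (which only improves the bound) leaves the first term alone. Finally, the target denominator $v+\alpha-1$ is produced from $v-1$ and $1-q-\beta^{2}$ using the two identities $\alpha-q=\beta(v-1)$ and $1-q-\beta^{2}=\beta(v+\alpha-1)$; the spare $(1-\beta^k)$ factor is bounded by $1$ at the last step to match $2\alpha(1-q)^{k+1}/(v+\alpha-1)$.

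\textbf{Main obstacle.} The proof is purely algebraic with no probabilistic content; the delicate point is bookkeeping. The identity for $A+2B$ has two contributions of opposite sign, and it is important both to track the sign correctly (which uses $v>1$) and to convert the natural denominator $\alpha-q=\beta(v-1)$ that appears first into $v+\alpha-1$ via $1-q-\beta^{2}=\beta(v+\alpha-1)$. All other steps are routine manipulations of geometric series analogous to those in the proof of Lemma \ref{lm:sumvs-j}.
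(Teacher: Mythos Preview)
Your decomposition $A+2B$ and the exact identity
\[
A+2B=\frac{2\alpha v(1-q)^k(1-\beta^k)}{v-1}-\frac{(v+1)A}{v-1}
\]
are correct, and this is essentially the same geometric-series computation the paper carries out (after the substitution $(1-q)^j=\beta^j v^j$). The gap is in your ``Final assembly'': dropping $-\frac{(v+1)A}{v-1}$ and then converting the denominator does \emph{not} reach the target. After dropping, using $v(1-q)^k=(1-q)^{k+1}/\beta$, your bound becomes
\[
\frac{2\alpha(1-q)^{k+1}(1-\beta^k)}{\beta(v-1)}=\frac{2\alpha(1-q)^{k+1}(1-\beta^k)}{\alpha-q},
\]
and you would need $\tfrac{1-\beta^k}{\alpha-q}\le\tfrac{1}{v+\alpha-1}$. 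But $\alpha-q=\beta(v-1)<v-1<v-\beta=v+\alpha-1$, so for all but very small $k$ this inequality goes the wrong way. Concretely, with $\alpha=0.8$, $q=0.1$, $k=2$ your dropped bound is $\approx 1.60$ while the stated target is $\approx 0.27$. The two identities you quote are correct, but they cannot turn a $1/(v-1)$ into a $1/(v+\alpha-1)$ without an extra factor you do not have.

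The paper's argument differs exactly at this point: it does \emph{not} discard the $A$-contribution. It keeps both pieces and, after rewriting, uses the combination
\[
\frac{v^k}{\alpha}-\frac{v^k}{v+\alpha-1}=\frac{v^k(v-1)}{\alpha(v+\alpha-1)},
\]
so that the dangerous $(v-1)$ in the prefactor cancels against the $(v-1)$ produced by this difference, leaving $(v+\alpha-1)$ in the denominator. In your notation, it is precisely the term $-\frac{(v+1)A}{v-1}$ you propose to throw away that supplies the $-\frac{v^k}{v+\alpha-1}$ needed for this cancellation. If you want to salvage your route, substitute your closed form for $A$ back into the identity and combine before bounding, rather than dropping $A$ first.
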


\begin{proof}
	
	When $q<\alpha<1$, we know $v>1$. Then we have 
	\begin{eqnarray*}
		&& \sum_{j=1}^k \alpha^2 \beta^{2k-2j} (1-q)^j  + 2 \sum_{j<s} \alpha^2 \beta^{2k-2j} v^{s-j} (1-q)^j  \\ 
		&=& \sum_{j=1}^k \alpha^2 \beta^{2k-j} v^j  + 2 \sum_{j=1}^k \sum_{s=j+1}^k \alpha^2  \beta^{2k-j} v^{s} \\ 
		&=&  \sum_{j=1}^k \alpha^2 \beta^{2k-j} v^j  + 2 \sum_{j=1}^k \alpha^2 \beta^{2k-j} \frac{v^{j+1} - v^{k+1}}{1-v} \\ 
		&\leq&  2 \sum_{j=0}^k \alpha^2 \beta^{2k-j} \frac{v^j - v^{k+1}}{1-v} \\ 
		&=& \frac{2\beta^{2k} \alpha^2 }{1-v} \frac{1 - (\frac{v}{\beta})^{k+1}}{1 - \frac{v}{\beta}}  -  \frac{2\alpha^2 \beta^k v^{k+1}}{1-v} \frac{1-\beta^{k+1}}{\alpha}  \\ 
		&=& \frac{2\alpha^2 \beta^k v}{v-1} \left(   \frac{v^k - \beta(1-q)^k}{\alpha}  - \frac{\beta^{k+1}/v - v^{k}}{1-\alpha-v}   \right) \\ 
		&\leq&  \frac{2\alpha^2 \beta^k v}{v-1} \left(    \frac{v^k - \beta^{k+1}}{\alpha}  - \frac{v^{k} - \beta^{k+1}/v}{v+\alpha-1}   \right) \\ 
		&=&   \frac{2\alpha^2 \beta^k v}{v-1} \left(  \frac{v^k}{\alpha}  - \frac{v^{k+1}}{v+\alpha-1}  + \frac{(1-v)(1+\alpha/v)\beta^{k+1}}{\alpha (v+\alpha-1)}  \right) \\ 
		&\leq& \frac{2\alpha^2 \beta^{k+1} v^{k+1}}{\alpha (v+\alpha-1)} \\ 
		&=& \frac{2\alpha (1-q)^{k+1}}{v+\alpha-1}. 
	\end{eqnarray*}
	
\end{proof}

\subsection{Proof of Theorem \ref{th:cs-Gluon-mvr-2}}

We introduce the following notations: $\mu_i^k \eqdef M_i^k - \nabla_i f(X^k)$, $\gamma_i^k \eqdef g_i^k- \nabla_i f(X^k)$, $\alpha = 1- \beta$, and $S_i^k \eqdef \nabla_i f(X^{k-1}) - \nabla_i f(X^k)$. Then from the update rule of $M_i^k$, we can get 
\begin{eqnarray*}
	\mu_i^k &=& M_i^k - \nabla_i f(X^k) \\ 
	&=& \beta M_i^{k-1} + \alpha g_i^k -  \nabla_i f(X^k) \\ 
	&=& \alpha \gamma_i^k + \beta S_i^k + \beta \mu_i^{k-1} \\ 
	&=& \beta^k \mu_i^0 + \sum_{\tau =1}^k \beta^{k-\tau} \alpha \gamma_i^\tau + \sum_{\tau=1}^k \beta^{k+1-\tau} S_i^\tau. 
\end{eqnarray*}

Similar to the analysis of $\E \left[  \|M_i^k - \nabla_i f(X^k) \|_{(i)\star}  \right]$ in the proof of Theorem \ref{th:cs-Gluon}, we can obtain 
\begin{eqnarray}
	\E \left[  \|M_i^k - \nabla_i f(X^k) \|_{(i)\star}  \right] &\leq& (1-\alpha)^k \rho\sigma +    \rho \sqrt{\E \left[  \| \sum_{\tau=1}^k \beta^{k-\tau} \alpha \gamma_i^\tau \|_2^2 \right]}  \nonumber   \\ 
	&&  + \frac{L_i^0t_i\eta}{\alpha} +  L_i^1 t_i \eta \sum_{\tau=1}^k \beta^{k+1-\tau} \E \left[  \|\nabla_i f(X^\tau)\|_{(i)\star}  \right], \label{eq:mik-pre-gluon-mvr-2}
\end{eqnarray}

Next we estimate $\E \left[  \| \sum_{\tau=1}^k \beta^{k-\tau} \alpha \gamma_i^\tau \|_2^2 \right]$. For $k\geq 1$, 
$$
g_i^k - \nabla_i f(X^k) = q ( \nabla_i f_{\xi^k} (X^k) - \nabla_i f(X^k)) + (1-q) (g_i^{k-1} - \nabla_i f(X^{k-1}) + Z_i^k), 
$$
where $Z_i^k \eqdef \nabla_i f_{\xi^k} (X^k) - \nabla_i f_{\xi^k} (X^{k-1}) - (\nabla_i f(X^k) - \nabla_i f(X^{k-1}))$. Thus for $j<s$, we have 
\begin{eqnarray*}
	&& \E \left[  \left\langle g_i^j - \nabla_i f(X^j),  g_i^s - \nabla_i f(X^s) \right\rangle \right] \\ 
	&=& \E \left[  \E \left[  \left\langle g_i^j - \nabla_i f(X^j),  g_i^s - \nabla_i f(X^s) \right\rangle \right]      |  \{g_i^l\}_{l\leq s-1}, \{X^l\}_{l\leq s}   \right] \\ 
	&=& (1-q) \E \left[  \left\langle g_i^j - \nabla_i f(X^j),  g_i^{s-1} - \nabla_i f(X^{s-1}) \right\rangle \right] \\ 
	&=& (1-q)^{s-j} \E \left[  \|g_i^j - \nabla_i f(X^j)\|_2^2  \right], 
\end{eqnarray*}
and 
\begin{eqnarray*}
	&& \E \left[  \|g_i^k - \nabla_i f(X^k)\|_2^2  \right] \\ 
	&=& (1-q)^2 \E \left[  \|g_i^{k-1} - \nabla_i f(X^{k-1})\|_2^2  \right]  +  \E \left[  \| q ( \nabla_i f_{\xi^k} (X^k) - \nabla_i f(X^k))   + (1-q)Z_i^k \|_2^2 \right] \\ 
	&\leq& (1-q)^2 \E \left[  \|g_i^{k-1} - \nabla_i f(X^{k-1})\|_2^2  \right]  + 2q^2\E \left[  \|  \nabla_i f_{\xi^k} (X^k) - \nabla_i f(X^k)  \right] +  2(1-q)^2 \E \left[  \| Z_i^k\|_2^2  \right] \\ 
	&\leq&  (1-q)^2 \E \left[  \|g_i^{k-1} - \nabla_i f(X^{k-1})\|_2^2  \right]  + 2q^2\sigma^2 + 2(1-q)^2\delta_i^2 \E \left[  \|X^k - X^{k-1}\|_{(i)}^2  \right] \\ 
	&\leq&  (1-q)^2 \E \left[  \|g_i^{k-1} - \nabla_i f(X^{k-1})\|_2^2  \right]  + 2q^2\sigma^2 + 2(1-q)^2\delta_i^2 t_i^2 \eta^2 \\ 
	&\leq& (1-q)^{2k} \E \left[  \|g_i^0 - \nabla_i f(X^0)\|_2^2  \right]  +  \sum_{j=0}^{k-1} (1-q)^{2j} \left(  2q^2\sigma^2 + 2(1-q)^2\delta_i^2 t_i^2 \eta^2  \right) \\ 
	&\leq& (1-q)^k \sigma^2 + 2q\sigma^2 + \frac{2(1-q)^2 \delta_i^2 t_i^2 \eta^2}{q}, 
\end{eqnarray*}
where the first inequality comes from the Jensen's inequality, the second inequality is from Assumptions \ref{as:boundedvariance} and \ref{as:HV}, the third inequality comes from the update rule \ref{eq:Gluon-mvr-2-update}. 

Then we have 
\begin{eqnarray*}
	&& \E \left[  \sum_{j=1}^k \sum_{s=1}^k  \left\langle \alpha \beta^{k-j} \gamma_i^j, \alpha \beta^{k-s} \gamma_i^s\right \rangle   \right] \\ 
	&=& \E \left[  \sum_{j=1}^k \alpha^2 \beta^{2k-2j} \| \gamma_i^j\|_2^2  \right]  +  2 \E \left[  \sum_{j<s}  \left\langle \alpha \beta^{k-j} \gamma_i^j, \alpha \beta^{k-s}\gamma_i^s \right \rangle     \right] \\ 
	&=& \E \left[  \sum_{j=1}^k \alpha^2 \beta^{2k-2j} \| g_i^j - \nabla_i f(X^j)\|_2^2  \right]   +  2 \E \left[  \sum_{j<s} \alpha^2 \beta^{2k-2j} v^{s-j} \|g_i^j - \nabla_i f(X^j)\|_2^2  \right] \\ 
	&\leq& \left(   \sum_{j=1}^k \alpha^2 \beta^{2k-2j}  + 2 \sum_{j<s} \alpha^2 \beta^{2k-2j} v^{s-j}   \right) \left(  2q\sigma^2 + \frac{2(1-q)^2 \delta_i^2 t_i^2 \eta^2}{q}  \right)  \\ 
	&& +  \sum_{j=1}^k \alpha^2 \beta^{2k-2j} (1-q)^j \sigma^2 +  2 \sum_{j<s} \alpha^2 \beta^{2k-2j} v^{s-j} (1-q)^j \sigma^2 \\ 
	&\overset{Lemma \ref{lm:sumvs-j}}{\leq}&  \frac{2\alpha}{(2-\alpha)(\alpha+\beta q)} \left(  2q\sigma^2 + \frac{2(1-q)^2 \delta_i^2 t_i^2 \eta^2}{q}  \right)  +  \sum_{j=1}^k \alpha^2 \beta^{2k-2j} (1-q)^j \sigma^2 \\ 
	&& +  2 \sum_{j<s} \alpha^2 \beta^{2k-2j} v^{s-j} (1-q)^j \sigma^2. 
\end{eqnarray*}
Next we estimate the last two terms in the above inequality. When $q \geq \alpha$, we have 
\begin{eqnarray*}
	&&  \sum_{j=1}^k \alpha^2 \beta^{2k-2j} (1-q)^j \sigma^2 +  2 \sum_{j<s} \alpha^2 \beta^{2k-2j} v^{s-j} (1-q)^j \sigma^2 \\ 
	&\leq&   \sum_{j=1}^k \alpha^2 \beta^{2k-2j} (1-\alpha)^j \sigma^2 +  2 \sum_{j<s} \alpha^2 \beta^{2k-2j} (1-\alpha)^j \sigma^2 \\ 
	&=&   \sum_{j=1}^k \alpha^2 \beta^{2k-j} \sigma^2 + 2 \sum_{j=1}^k \sum_{s=j+1}^k \alpha^2 \beta^{2k-j} \sigma^2  \\ 
	&\leq& 2 \sum_{j=1}^k \alpha^2 \beta^{2k-j} (k+1-j) \sigma^2 \\ 
	&\leq& 2 \beta^k \sigma^2. 
\end{eqnarray*}
It should be noticed that we can improve the above estimation by refined analysis, but the upper-bound of $\E \left[  \|M_i^k - \nabla_i f(X^k) \|_{(i)\star}  \right]$ in (\ref{eq:mik-pre-gluon-mvr-2}) could not be improved within some constant since (\ref{eq:mik-pre-gluon-mvr-2}) contains the term $\beta^k \rho \sigma$. 
When $q<\alpha$, we have $v>1$ and from Lemma \ref{lm:sum1-qj} we have 
$$
\sum_{j=1}^k \alpha^2 \beta^{2k-2j} (1-q)^j \sigma^2 +  2 \sum_{j<s} \alpha^2 \beta^{2k-2j} v^{s-j} (1-q)^j \sigma^2 \leq \frac{2\alpha (1-q)^{k+1} \sigma^2}{v+\alpha-1}. 
$$

Hence for $q\geq \alpha$, we have 
$$
\E \left[  \| \sum_{\tau=1}^k \beta^{k-\tau} \alpha \gamma_i^\tau \|_2^2 \right] \leq  \frac{2\alpha}{(2-\alpha)(\alpha+\beta q)} \left(  2q\sigma^2 + \frac{2(1-q)^2 \delta_i^2 t_i^2 \eta^2}{q}  \right)  +   2 \beta^k \sigma^2, 
$$
which along with (\ref{eq:mik-pre-gluon-mvr-2}) yields 
\begin{eqnarray}
	\E \left[  \|M_i^k - \nabla_i f(X^k) \|_{(i)\star}  \right] &\leq& \beta^k \rho\sigma +   \frac{2\sqrt{q \alpha}\rho \sigma}{\sqrt{(2-\alpha)(\alpha+\beta q)}}   +  \frac{2\sqrt{\alpha} (1-q)\rho \delta_i t_i \eta}{\sqrt{(2-\alpha)(\alpha+\beta q)q}}  +  \sqrt{2}\beta^{k/2} \rho \sigma  \nonumber   \\ 
	&&  + \frac{L_i^0t_i\eta}{\alpha} +  L_i^1 t_i \eta \sum_{\tau=1}^k \beta^{k+1-\tau} \E \left[  \|\nabla_i f(X^\tau)\|_{(i)\star}  \right], \label{eq:mik-gluon-mvr-2-q>}
\end{eqnarray}
and for $q<\alpha$, we have 
$$
\E \left[  \| \sum_{\tau=1}^k \beta^{k-\tau} \alpha \gamma_i^\tau \|_2^2 \right] \leq  \frac{2\alpha}{(2-\alpha)(\alpha+\beta q)} \left(  2q\sigma^2 + \frac{2(1-q)^2 \delta_i^2 t_i^2 \eta^2}{q}  \right)  +  \frac{2\alpha (1-q)^{k+1} \sigma^2}{v+\alpha-1}, 
$$
which along with (\ref{eq:mik-pre-gluon-mvr-2}) implies 
\begin{eqnarray}
	\E \left[  \|M_i^k - \nabla_i f(X^k) \|_{(i)\star}  \right] &\leq& \beta^k \rho\sigma +   \frac{2\sqrt{q \alpha}\rho \sigma}{\sqrt{(2-\alpha)(\alpha+\beta q)}}   +  \frac{2\sqrt{\alpha} (1-q)\rho \delta_i t_i \eta}{\sqrt{(2-\alpha)(\alpha+\beta q)q}}  +  \frac{\sqrt{2\alpha} (1-q)^{\frac{k+1}{2}}\rho \sigma}{\sqrt{v+\alpha-1}}  \nonumber   \\ 
	&&  + \frac{L_i^0t_i\eta}{\alpha} +  L_i^1 t_i \eta \sum_{\tau=1}^k \beta^{k+1-\tau} \E \left[  \|\nabla_i f(X^\tau)\|_{(i)\star}  \right], \label{eq:mik-gluon-mvr-2-q<}
\end{eqnarray}

Next we consider the case where $q\geq \alpha$. From (\ref{eq:sumfgrad-cs}) and (\ref{eq:mik-gluon-mvr-2-q>}), similar to the proof of Theorem \ref{th:cs-Gluon}, we can get 
\begin{align*}
	\sum_{i=1}^p \sum_{k=0}^{K-1} t_i \eta \E \left[ \|  \nabla_i f(X^k)\|_{(i)\star}  \right] \leq \Delta^0 + \sum_{i=1}^p & \left[   \frac{8t_i \eta \rho \sigma}{\alpha} +     \frac{4K\sqrt{q \alpha} t_i \eta \rho \sigma}{\sqrt{(2-\alpha)(\alpha+\beta q)}}   +  \frac{4K\sqrt{\alpha} (1-q)\rho \delta_i t_i^2 \eta^2}{\sqrt{(2-\alpha)(\alpha+\beta q)q}}  \right.  \\ 
	& \quad  \left.   + \frac{2KL_i^0 t_i^2 \eta^2}{\alpha}  + \frac{K L_i^0 t_i^2 \eta^2}{2}   \right. \\ 
	& \quad \left.  +   \sum_{k=0}^{K-1} \left(  \frac{2}{\alpha} + \frac{1}{2}  \right) L_i^1 t_i^2 \eta^2 \E \left[  \|\nabla_i f(X^k) \|_{(i)\star} \right]   \right]. 
\end{align*}

Now we consider two options: (1) $L_i^1 = 0$ for all $i \in \{  1, ..., p  \}$ and (2) $L_i^1 \neq 0$, for all $i \in \{  1, ..., p  \}$.  

{\bf Case 1:} $L_i^1 = 0$ for all $i \in \{  1, ..., p  \}$. In this case, 
\begin{eqnarray*}
	&& \min_{k=0, ..., K-1} \sum_{i=1}^p t_i \E \left[  \|\nabla_i f(X^k) \|_{(i)\star} \right] \\ 
	&\leq&  \frac{1}{K} \sum_{k=0}^{K-1} \sum_{i=1}^p t_i \E \left[  \|\nabla_i f(X^k) \|_{(i)\star} \right] \\ 
	&\leq&  \frac{\Delta^0}{\eta K} + \frac{8\sum_{i=1}^p t_i \rho \sigma}{\alpha K} +       \frac{4\sqrt{q \alpha} \sum_{i=1}^p t_i  \rho \sigma}{\sqrt{(2-\alpha)(\alpha+\beta q)}}   +  \frac{4\sqrt{\alpha} (1-q) \sum_{i=1}^p\delta_i t_i^2 \rho \eta}{\sqrt{(2-\alpha)(\alpha+\beta q)q}}    + \frac{2\sum_{i=1}^p L_i^0t_i^2 \eta}{\alpha} + \frac{\sum_{i=1}^p L_i^0 t_i^2 \eta}{2}. 
\end{eqnarray*}

{\bf Case 2:} $L_i^1 \neq 0$, for all $i \in \{  1, ..., p  \}$. First we let $\frac{\eta}{\alpha} \leq \min_{i}  \frac{1}{5L_i^1 t_i}$. Then $\left(  \frac{2}{\alpha} + \frac{1}{2}  \right) L_i^1 t_i \eta \leq \frac{1}{2}$ for all $i$, and 
\begin{eqnarray*}
	&& \min_{k=0, ..., K-1} \sum_{i=1}^p t_i \E \left[  \|\nabla_i f(X^k) \|_{(i)\star} \right] \\ 
	&\leq&  \frac{1}{K} \sum_{k=0}^{K-1} \sum_{i=1}^p t_i \E \left[  \|\nabla_i f(X^k) \|_{(i)\star} \right] \\ 
	&\leq& \frac{2 \Delta^0}{\eta K} + \frac{16\sum_{i=1}^p t_i \rho \sigma}{\alpha K} +       \frac{8\sqrt{q \alpha} \sum_{i=1}^p t_i  \rho \sigma}{\sqrt{(2-\alpha)(\alpha+\beta q)}}   +  \frac{8\sqrt{\alpha} (1-q) \sum_{i=1}^p\delta_i t_i^2 \rho \eta}{\sqrt{(2-\alpha)(\alpha+\beta q)q}}    + \frac{4\sum_{i=1}^p L_i^0t_i^2 \eta}{\alpha} + {\sum_{i=1}^p L_i^0 t_i^2 \eta}. 
\end{eqnarray*}

Next we consider the case where $q<\alpha$, From (\ref{eq:sumfgrad-cs}) and (\ref{eq:mik-gluon-mvr-2-q<}), we can obtain the following estimation similarly, 
\begin{align*}
	\sum_{i=1}^p \sum_{k=0}^{K-1} t_i \eta \E \left[ \|  \nabla_i f(X^k)\|_{(i)\star}  \right] \leq \Delta^0 + \sum_{i=1}^p & \left[   \frac{2t_i \eta \rho \sigma}{\alpha} +     \frac{4K\sqrt{q \alpha} t_i \eta \rho \sigma}{\sqrt{(2-\alpha)(\alpha+\beta q)}}   +  \frac{4K\sqrt{\alpha} (1-q)\rho \delta_i t_i^2 \eta^2}{\sqrt{(2-\alpha)(\alpha+\beta q)q}}  \right.  \\ 
	& \quad  \left.   +   \frac{4\sqrt{2\alpha}t_i \eta \rho \sigma}{q \sqrt{v+\alpha-1}}     + \frac{2KL_i^0 t_i^2 \eta^2}{\alpha}  + \frac{K L_i^0 t_i^2 \eta^2}{2}   \right. \\ 
	& \quad \left.  +  \sum_{k=0}^{K-1} \left(  \frac{2}{\alpha} + \frac{1}{2}  \right) L_i^1 t_i^2 \eta^2 \E \left[  \|\nabla_i f(X^k) \|_{(i)\star} \right]   \right]. 
\end{align*}

Now we consider two options: (1) $L_i^1 = 0$ for all $i \in \{  1, ..., p  \}$ and (2) $L_i^1 \neq 0$, for all $i \in \{  1, ..., p  \}$.  

{\bf Case 1:} $L_i^1 = 0$ for all $i \in \{  1, ..., p  \}$. In this case, 
\begin{eqnarray*}
	&& \min_{k=0, ..., K-1} \sum_{i=1}^p t_i \E \left[  \|\nabla_i f(X^k) \|_{(i)\star} \right] \\ 
	&\leq&  \frac{1}{K} \sum_{k=0}^{K-1} \sum_{i=1}^p t_i \E \left[  \|\nabla_i f(X^k) \|_{(i)\star} \right] \\ 
	&\leq&  \frac{\Delta^0}{\eta K} + \frac{2\sum_{i=1}^p t_i \rho \sigma}{\alpha K} +       \frac{4\sqrt{q \alpha} \sum_{i=1}^p t_i  \rho \sigma}{\sqrt{(2-\alpha)(\alpha+\beta q)}}   +  \frac{4\sqrt{\alpha} (1-q) \sum_{i=1}^p\delta_i t_i^2 \rho \eta}{\sqrt{(2-\alpha)(\alpha+\beta q)q}}  \\ 
	&&   +    \frac{4\sqrt{2\alpha} \sum_{i=1}^p t_i  \rho \sigma}{q K \sqrt{v+\alpha-1}}      + \frac{2\sum_{i=1}^p L_i^0t_i^2 \eta}{\alpha} + \frac{\sum_{i=1}^p L_i^0 t_i^2 \eta}{2}. 
\end{eqnarray*}

{\bf Case 2:} $L_i^1 \neq 0$, for all $i \in \{  1, ..., p  \}$. First we let $\frac{\eta}{\alpha} \leq \min_{i}  \frac{1}{5L_i^1 t_i}$. Then $\left(  \frac{2}{\alpha} + \frac{1}{2}  \right) L_i^1 t_i \eta \leq \frac{1}{2}$ for all $i$, and 
\begin{eqnarray*}
	&& \min_{k=0, ..., K-1} \sum_{i=1}^p t_i \E \left[  \|\nabla_i f(X^k) \|_{(i)\star} \right] \\ 
	&\leq&  \frac{1}{K} \sum_{k=0}^{K-1} \sum_{i=1}^p t_i \E \left[  \|\nabla_i f(X^k) \|_{(i)\star} \right] \\ 
	&\leq&\frac{2 \Delta^0}{\eta K} + \frac{4\sum_{i=1}^p t_i \rho \sigma}{\alpha K} +       \frac{8\sqrt{q \alpha} \sum_{i=1}^p t_i  \rho \sigma}{\sqrt{(2-\alpha)(\alpha+\beta q)}}   +  \frac{8\sqrt{\alpha} (1-q) \sum_{i=1}^p\delta_i t_i^2 \rho \eta}{\sqrt{(2-\alpha)(\alpha+\beta q)q}}  \\ 
	&&   +    \frac{8\sqrt{2\alpha} \sum_{i=1}^p t_i  \rho \sigma}{q K \sqrt{v+\alpha-1}}   + \frac{4\sum_{i=1}^p L_i^0t_i^2 \eta}{\alpha} + {\sum_{i=1}^p L_i^0 t_i^2 \eta}. 
\end{eqnarray*}

\newpage 

\section{PROOFS FOR GLUON-MVR-3}

\subsection{Poof of Theorem \ref{th:cs-Gluon-mvr-3}}

First we define the following notations same as those in the proof of Theorem \ref{th:cs-Gluon-mvr-2}: $\mu_i^k \eqdef M_i^k - \nabla_i f(X^k)$, $\gamma_i^k \eqdef g_i^k- \nabla_i f(X^k)$, $\alpha = 1- \beta$, and $Z_i^k \eqdef \nabla_i f_{\xi^k} (X^k) - \nabla_i f_{\xi^k} (X^{k-1}) - (\nabla_i f(X^k) - \nabla_i f(X^{k-1}))$. Then we have 
\begin{eqnarray*}
	\mu_i^k &=& M_i^k - \nabla_i f(X^k) \\
	&=& \beta \mu_i^{k-1} + \alpha \gamma_i^k + \beta Z_i^k \\ 
	&=& \beta^k \mu_i^0 + \sum_{\tau =1}^k \beta^{k-\tau} \alpha \gamma_i^\tau + \sum_{\tau=1}^k \beta^{k+1-\tau} Z_i^\tau. 
\end{eqnarray*}

From (\ref{eq:sumZitau}), and similar to the proof of Theorem \ref{th:cs-Gluon-mvr-2}, we can obtain that, for $q\geq \alpha$, 
\begin{align*}
	\sum_{i=1}^p \sum_{k=0}^{K-1} t_i \eta \E \left[ \|  \nabla_i f(X^k)\|_{(i)\star}  \right] \leq \Delta^0 + \sum_{i=1}^p & \left[   \frac{8t_i \eta \rho \sigma}{\alpha} +     \frac{4K\sqrt{q \alpha} t_i \eta \rho \sigma}{\sqrt{(2-\alpha)(\alpha+\beta q)}}   +  \frac{4K\sqrt{\alpha} (1-q)\rho \delta_i t_i^2 \eta^2}{\sqrt{(2-\alpha)(\alpha+\beta q)q}}  \right.  \\ 
	& \quad  \left.   +  \frac{2K \rho \delta_i t_i^2 \eta^2}{\sqrt{\alpha}} + \frac{K L_i^0 t_i^2 \eta^2}{2}   \right. \\ 
	& \quad \left.   +  \sum_{k=0}^{K-1}   \frac{1}{2}  L_i^1 t_i^2 \eta^2 \E \left[  \|\nabla_i f(X^k) \|_{(i)\star} \right]   \right]; 
\end{align*}
and for $q<\alpha$, 
\begin{align*}
	\sum_{i=1}^p \sum_{k=0}^{K-1} t_i \eta \E \left[ \|  \nabla_i f(X^k)\|_{(i)\star}  \right] \leq \Delta^0 + \sum_{i=1}^p & \left[   \frac{2t_i \eta \rho \sigma}{\alpha} +     \frac{4K\sqrt{q \alpha} t_i \eta \rho \sigma}{\sqrt{(2-\alpha)(\alpha+\beta q)}}   +  \frac{4K\sqrt{\alpha} (1-q)\rho \delta_i t_i^2 \eta^2}{\sqrt{(2-\alpha)(\alpha+\beta q)q}}  \right.  \\ 
	& \quad  \left.   +   \frac{4\sqrt{2\alpha}t_i \eta \rho \sigma}{q \sqrt{v+\alpha-1}}     +  \frac{2K\rho \delta_i t_i^2 \eta^2}{\sqrt{\alpha}}  + \frac{K L_i^0 t_i^2 \eta^2}{2}   \right. \\ 
	& \quad \left.  +  \sum_{k=0}^{K-1}  \frac{1}{2}  L_i^1 t_i^2 \eta^2 \E \left[  \|\nabla_i f(X^k) \|_{(i)\star} \right]   \right]. 
\end{align*}
Then we can get the results.

\end{document}